\newtheorem{theorem}{Theorem}[section]
\newtheorem{lemma}[theorem]{Lemma}
\newtheorem{corollary}[theorem]{Corollary}
\newtheorem{rem}[theorem]{Remark}
\newcommand*{\rom}[1]{\expandafter\@slowromancap\romannumeral #1@}
\newcommand{\supp}{{\rm supp}\thinspace}
\newenvironment{proof}{\begin{trivlist} \item[] {\em Proof:}}{\hfill $\Box$
                       \end{trivlist}}
\renewcommand*\l@section{\@dottedtocline{1}{0em}{1.5em}}
\renewcommand*\l@subsection{\@dottedtocline{2}{1.5em}{2.3em}}
\renewcommand*\l@subsubsection{\@dottedtocline{3}{3.8em}{3.7em}}
\numberwithin{equation}{section}
\begin{document}
\title{Regularity of solutions to the Muskat equation}
\author{Jia Shi}
\maketitle
\begin{abstract}
In this paper, we show that if a solution to the Muskat problem in the case of different densities and the same viscosity is 
 sufficiently smooth, then it must be analytic except at the points 
 where a turnover of the fluids happens.
\end{abstract}

\section{Introduction}
The Muskat problem is a free boundary problem studying the interface between fluids in the porous media \cite{MuskatPhysics}. It can also describe the Hele-Shaw cell \cite{shaw1898motion}. The density function $\rho$ follows the active scalar equation:
\begin{equation}\label{physicalequation}
 \frac{d\rho}{dt}+(v\cdot \nabla)\rho=0,   
\end{equation}
with 
\begin{align*}
\rho(x,t)=\left\{\begin{array}{ccc}
\rho_1 && x\in D_1(t),\\
\rho_2 && x\in D_2(t). \end{array}\right.
\end{align*}
Here $D_1(t)$ and $D_2(t)$ are open domains with $D_1(t)\cup D_2(t) \cup \partial D_{1}(t)=\mathbb{R}^2$. The velocity field $v$ in \eqref{physicalequation} satisfies Darcy's law:
\begin{equation}\label{Darcylaw}
    \frac{\mu}{\kappa}v=-\nabla p -(0,g\rho),
\end{equation}
and the incompressibility condition: 
\[
\nabla \cdot v=0,
\]
where $p$ is the pressure and $\mu$ is the viscosity.  $\kappa$, $g$ are the permeability constant and the gravity force.

We focus on the problem where two fluids have different densities $\rho_1, \rho_2$ and the same viscosity $\mu$. 

After scaling, the equation for the boundary $\partial D_{1}(t)$  in the periodic setting reads:

\begin{equation}\label{muskat equation}
\frac{\partial f_i}{\partial t}(\alpha,t)=\frac{\rho_2-\rho_1}{2}\int_{-\pi}^{\pi} \frac{\sin(f_1(\alpha)-f_1(\alpha-\beta))(\partial_{\alpha}f_i(\alpha)-\partial_{\alpha}f_i(\alpha-\beta))}{\cosh(f_2(\alpha)-f_2(\alpha-\beta))-\cos(f_{1}(\alpha)-f_{1}(\alpha-\beta))}d\beta,
\end{equation}
for $i=1,2$ (See \cite{Castro-Cordoba-Fefferman-Gancedo-LopezFernandez:rayleigh-taylor-breakdown}). Here $f(\alpha,t)=(f_1(\alpha,t),f_2(\alpha,t))$ is a parameterization of the boundary curve. $f(\alpha,t)-(\alpha,0)$ is periodic in $\alpha$. 

 Given an initial interface at time 0, \eqref{muskat equation} is divided into three regimes. When the interface is a graph and the heavier fluid is on the bottom as in Figure \ref{fig_turning over1}a, it is in a stable regime. When heavier fluid is above the boundary as in Figure \ref{fig_turning over1}b, it is in a stable regime when time flows backward. Thus, given any initial data, \eqref{muskat equation} can be solved for small negative time $t$. In both regimes, shown in Figures \ref{fig_turning over1}a, \ref{fig_turning over1}b, \eqref{muskat equation} can not be solved in the wrong direction unless the initial interface is real analytic. 
 The third regime, shown in Figure \ref{fig_turning over1}c, it highly unstable because the heavier fluid lies on top near point $S_1$ while the lighter fluid lies on top near point $S_2$. Note two turnover points $T_1$ and $T_2$ where the interface has a vertical tangent. For generic initial data in the turnover regime, \eqref{muskat equation} has no solutions either as time flows forward or backward. 
\begin{figure}[h!]
\begin{center}
\includegraphics[scale=0.8]{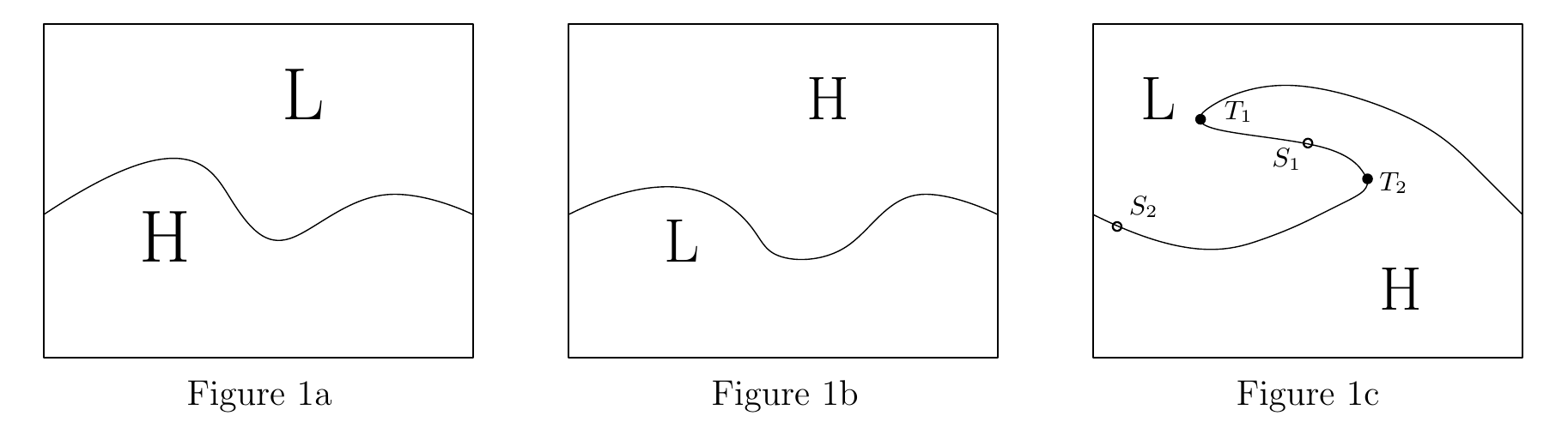}
\caption{\label{fig_turning over1}Three regimes of the Muskat equation.}
\end{center}
\end{figure}

In the third regime, there are several examples from the literature (eg. \cite{Castro-Cordoba-Fefferman-Gancedo-LopezFernandez:rayleigh-taylor-breakdown}, \cite{Castro-Cordoba-Fefferman-Gancede}, \cite{Cordoba-GomezSerrano-Zlatos:stability-shifting-muskat}, \cite{Cordoba-GomezSerrano-Zlatos:stability-shifting-muskat-II}), but they are all real analytic solutions.  Without the real analytic assumption, due to the spatially non-consistent parabolic behavior, the existence is usually false and the uniqueness is unknown. 
To address this gap, this paper studies to what extent the solution of $\eqref{muskat equation}$ is analytic.
 
Moreover, for the analytic solutions, one can prove an energy estimate on an analyticity region that shrinks when time increases. That energy estimate implies uniqueness in the class of analytic solutions. \cite{Castro-Cordoba-Fefferman-Gancedo-LopezFernandez:rayleigh-taylor-breakdown}. Therefore,  the investigation towards analyticity can serve as a first step to deal with the uniqueness.

We introduce a new way to prove that any sufficiently smooth solution is analytic except at the turnover points. Here is our main theorem:
 \begin{theorem}\label{thm1}
 Let $f(\alpha,t)=(f_1(\alpha,t),f_2(\alpha,t))\in C^{1}([-t_0,t_0],H^{6}[-\pi,\pi]\times H^{6}[-\pi,\pi]))$ be a solution of the Muskat equation \eqref{muskat equation} satisfying the arc-chord condition. If $\partial_{\alpha}f_1(\alpha_0,t)\neq 0$, and $-t_0< t<t_0$, then $f(\cdot,t)$ is analytic at $\alpha_0$.
 \end{theorem}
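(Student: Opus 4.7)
The plan is to prove analyticity by showing that the Muskat equation \eqref{muskat equation} is locally parabolic of order one at any point where $\partial_\alpha f_1 \ne 0$, with a definite-sign principal coefficient after possibly reversing time, and then using an analytic-Sobolev energy estimate with a time-growing weight to convert the $H^6$ regularity into analyticity at $\alpha_0$. The driving mechanism is the definite sign of the dissipative leading term, which survives a spatial cutoff but generates nonlocal commutator terms that must be controlled carefully.

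Fix $t_1 \in (-t_0, t_0)$ and, after possibly replacing $t$ by $-t$ (which by symmetry of \eqref{muskat equation} interchanges $\rho_1$ and $\rho_2$), assume $(\rho_2 - \rho_1)\,\partial_\alpha f_1(\alpha_0, t_1) > 0$. By $H^6 \hookrightarrow C^5$ and the $C^1_t$ hypothesis, this quantity stays $\ge \eta > 0$ on a spacetime box $V = (\alpha_0 - 3\delta, \alpha_0 + 3\delta) \times (t_1 - \tau_0, t_1 + \tau_0)$. Taylor-expanding the integrand via $\sin(f_1(\alpha)-f_1(\alpha-\beta)) = \partial_\alpha f_1(\alpha)\,\beta + O(\beta^3)$ and $\cosh(f_2(\alpha)-f_2(\alpha-\beta)) - \cos(f_1(\alpha)-f_1(\alpha-\beta)) = \tfrac12\bigl((\partial_\alpha f_1)^2 + (\partial_\alpha f_2)^2\bigr)\beta^2 + O(\beta^4)$, the leading $O(\beta^{-1})$ singularity combines with the difference $\partial_\alpha f_i(\alpha)-\partial_\alpha f_i(\alpha-\beta)$ to produce a Hilbert-transform-type operator, giving the local normal form
\[
\partial_t f_i = -a(\alpha, t)\,\Lambda f_i + R_i[f], \qquad a(\alpha,t) = c\,\frac{(\rho_2-\rho_1)\,\partial_\alpha f_1}{(\partial_\alpha f_1)^2 + (\partial_\alpha f_2)^2}, \quad c>0,
\]
with $\Lambda=(-\Delta)^{1/2}$, and $R_i[f]$ collecting the far-field tail $|\beta|>\beta_0$ (a smoothing remainder), subprincipal Taylor corrections, and lower-order nonlinearities controlled by $\|f\|_{H^6}$. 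On $V$, $a$ is bounded below by some $\eta'>0$.

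Next, pick a cutoff $\chi$ with $\supp \chi \subset (\alpha_0-2\delta, \alpha_0+2\delta)$ and $\chi\equiv 1$ on $(\alpha_0-\delta, \alpha_0+\delta)$, and for an increasing weight $\tau(t)$ with $\tau(t_1-\tau_0) = 0$ define the analytic energy
\[
E(t) = \sum_{k\ge 0} \frac{\tau(t)^{2k}}{(k!)^2}\,\bigl\|\chi\,\partial_\alpha^{k+3} f(\cdot,t)\bigr\|_{L^2(\mathbb{T})}^2.
\]
Differentiating in time and substituting the normal form, the term $-a\Lambda$ produces a negative top-order contribution proportional to a ``half-derivative'' analytic quadratic form, while $\dot\tau$ produces a positive contribution of the identical order. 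The choice $\dot\tau(t)=\eta'/2$ makes the top-order net nonpositive, and the remaining cutoff, commutator, and nonlinear contributions are dominated by $P(\|f\|_{H^6})\,E(t)$ (with Cauchy--Schwarz remainders absorbable into the dissipation). A Gr\"onwall argument yields $E(t_1)\le e^{P(\|f\|_{H^6})\tau_0}\,\|f\|_{L^\infty_t H^3_\alpha}^2 < \infty$ with $\tau(t_1)=\eta'\tau_0/2>0$; standard Cauchy--Hadamard reasoning on $\{\chi\equiv 1\}$ (combined with Sobolev embedding for pointwise derivatives) then gives analyticity of $f(\cdot, t_1)$ at $\alpha_0$. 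The sum over all $k$ at the initial time is finite since only the $k=0$ term is nonzero there; higher-$k$ terms at later times are made rigorous by approximation with smooth data and passage to the limit.

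The principal obstacle is controlling the interaction between $\chi$, the analytic weight, and the nonlocal Muskat integral. Because the operator at $\alpha\in\supp\chi$ couples to values $f(\alpha-\beta)$ for all $\beta\in(-\pi,\pi)$, including regions where only $H^6$ control is available, the weight cannot be naively commuted past the operator. This will be handled by splitting the $\beta$-integral according to whether both endpoints $\alpha,\alpha-\beta$ lie in $\{\chi\equiv 1\}$, only one does, or neither does; the ``mixed'' and ``far'' contributions are estimated using the $\beta$-decay of the kernel and are subprincipal on the analytic scale. Tracking the $k$-dependence of the commutator constants and keeping them at $C^k k!/\tau^k$ rather than $(k+1)!/\tau^k$ is precisely what distinguishes true analyticity from a weaker Gevrey-$s$ statement ($s>1$), and this bookkeeping is where the bulk of the technical labor lies.
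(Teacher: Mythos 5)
Your approach is genuinely different from the paper's, and the difference matters. You propose an analytic--Sobolev energy $E(t)=\sum_k \frac{\tau(t)^{2k}}{(k!)^2}\|\chi\,\partial_\alpha^{k+3}f\|_{L^2}^2$ and a Gr\"onwall argument, which is the standard ``growing analyticity radius'' technique for locally parabolic equations. The paper instead \emph{constructs} the analytic continuation directly: it solves, for each $\gamma\in[-1,1]$, a localized complexified evolution $\frac{d}{dt}z(\alpha,\gamma,t)=T(z)$ on the contour $\alpha+ic(\alpha)\gamma t$, shows $z(\cdot,0,\cdot)$ coincides with the localized original solution $f^c=\lambda f$ (Section \ref{uniquenessfar}), establishes $C^1$-dependence on $\gamma$ (Sections \ref{continuityfar}--\ref{diffrenfar}), and then verifies the Cauchy--Riemann equation $A_0(z)=0$ by another $L^2$ Gr\"onwall (Section \ref{analyticityfar}). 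All the estimates in the paper are at fixed finite order ($H^5$ or $L^2$); no infinite sum over derivative orders ever appears. The paper explicitly flags (see the remark after the outline and the comparison with \cite{Castro-Cordoba-Fefferman-Gancedo-LopezFernandez:rayleigh-taylor-breakdown}) that the strip/energy approach you take ``leads to lots of difficulties by the standard method due to the localization'' --- your proposal is precisely the method the authors chose to avoid.

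The concrete gap in your argument is rigorization. The Gr\"onwall step requires knowing $E(t)<\infty$ to begin with, which an $H^6$ solution does not furnish. You gesture at ``approximation with smooth data and passage to the limit,'' but that step needs two things you do not supply: (i) a well-posed problem for the approximating data, and (ii) convergence of the approximants to the \emph{original} solution $f$. The full Muskat equation may be ill-posed forward or backward in time away from $\alpha_0$ (the R--T coefficient can change sign there), so you cannot simply restart the original evolution from smoothed data at $t_1-\tau_0$; and if you pass to a localized model, you must specify it, prove it is well-posed, and prove the limit reproduces $f$ near $\alpha_0$. This is exactly what the paper's construction accomplishes: $\tilde f=(1-\lambda)f$ is treated as a prescribed analytic forcing (extended trivially to $D_A$ via \eqref{tildefana}), the equation for $z$ is well-posed because $-\Re L_z^1 - |\Im L_z^2| > 0$ there (the localized R--T sign after choosing $\delta_c$ small), and the uniqueness argument in Section \ref{uniquenessfar} certifies $z(\alpha,0,t)=f^c(\alpha,t)$, which is the missing identification in your limiting argument. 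A secondary concern: the combinatorial bookkeeping keeping constants at $C^k k!$ rather than $(k+1)!$ under Fa\`a di Bruno expansion of the kernel at order $k+3$, with the cutoff $\chi$ present, is where you say the bulk of the labor lies --- and it is not carried out. The paper's formulation sidesteps it entirely, because the Cauchy--Riemann verification only ever needs a single derivative in $\gamma$ and $L^2$ Gr\"onwall, never an infinite derivative hierarchy.
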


Our method concerning the analyticity is not limited to the Muskat problem. A simplified version of our method can be used to show the analyticity of the solution to a kind of non-local differential equations (see Section \ref{analyticitystationary}). This approach is new to our best knowledge.

In our forthcoming work \cite{muskatregulatiryturnover2}, we focus on the degenerate analyticity near the turnover points. The existence and uniqueness are crucially related to the way the real-analyticity degenerates at those points.
  Given an extra assumption, we have the following theorem in \cite{muskatregulatiryturnover2}:
 \begin{theorem}\label{muskatnear}
 Let $f(\alpha,t)=(f_1(\alpha,t),f_2(\alpha,t))\in C^{1}([-t_0,t_0],C^{100}([-\pi.\pi])$ be a solution of the Muskat equation \eqref{muskat equation}  with two turnover points. $Z_1(t)$, $Z_2(t)$ are values of $\alpha$ of these two turnover points. If we assume the solution satisfies the following three conditions:
\begin{align}\label{extracondition1}
\partial_{\alpha}^2f_1(Z_1(t),t)\neq 0,
\end{align}
\begin{align}\label{extracondition2}
\partial_{\alpha}f_1(\alpha,t) \neq 0 \text{  except at $Z_1(t)$, $Z_2(t)$}, 
\end{align}
and
\begin{align}\label{extracondition3}
(\frac{d Z_1}{dt}(t)+\frac{\rho_2-\rho_1}{2}p.v.\int_{-\pi}^{\pi}\frac{\sin(f_1(\alpha)-f_1(\alpha-\beta))}{\cosh(f_2(\alpha)-f_2(\alpha-\beta))-\cos(f_1(\alpha)-f_1(\alpha-\beta))}d\beta) \frac{\rho_2-\rho_1}{2}\partial_{\alpha}^2f_1(Z_1(t),t)<0,
\end{align}
then when $-t_0<t<t_0$, $f(\cdot,t)$ can be analytically extended to region $\Omega=\{x+iy|-\epsilon_1(t)+Z_{1}(t)\leq x\leq Z_{1}(t)+\epsilon_1(t),|y|\leq \epsilon_2(t)(x-Z_{1}(t))^2\}$.
\end{theorem}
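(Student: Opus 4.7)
The plan is to adapt the Cauchy-estimate-based analyticity method underlying Theorem \ref{thm1} (see Section \ref{analyticitystationary}) to the degenerate setting at a turnover point, where $\partial_\alpha f_1$ has a simple zero. First I would pass to a moving frame that freezes the turnover point: writing $\tilde\alpha = \alpha - Z_1(t)$ and $\tilde f(\tilde\alpha,t) = f(\tilde\alpha + Z_1(t), t)$, equation \eqref{muskat equation} becomes
\begin{equation*}
\partial_t \tilde f_i(\tilde\alpha,t) = -\frac{dZ_1}{dt}(t)\,\partial_{\tilde\alpha}\tilde f_i(\tilde\alpha,t) + \mathcal{N}_i[\tilde f](\tilde\alpha,t),
\end{equation*}
where $\mathcal{N}_i$ is the integral operator appearing on the right-hand side of \eqref{muskat equation}. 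By \eqref{extracondition1}--\eqref{extracondition2} one has $\tilde f_1(\tilde\alpha,t) = \tilde f_1(0,t) + \tfrac{1}{2} b(t)\tilde\alpha^2 + O(\tilde\alpha^3)$ with $b(t) = \partial_\alpha^2 f_1(Z_1(t),t) \neq 0$, and $\tilde\alpha = 0$ is the only critical point of $\tilde f_1$ on a neighborhood. The target is the weighted Cauchy bound
\begin{equation*}
|\partial_{\tilde\alpha}^n \tilde f(x,t)| \leq \frac{M(t)\, n!}{(\epsilon_2(t)\, x^2)^n}, \qquad 0 < |x| \leq \epsilon_1(t),
\end{equation*}
which by a standard contour-integral argument is equivalent to an analytic extension of $\tilde f(\cdot,t)$ to $\{x+iy : |x|\leq \epsilon_1(t),\ |y|\leq \epsilon_2(t) x^2\}$, exactly the conclusion of the theorem.

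To propagate this estimate forward in time I would differentiate the moving-frame equation $n$ times in $\tilde\alpha$ and isolate the leading-order contribution coming from the derivative falling on the factor $\partial_\alpha f_i$ inside $\mathcal{N}_i$; at $\tilde\alpha = 0$ this contribution takes the form
\begin{equation*}
\Bigl(-\frac{dZ_1}{dt}(t) + \frac{\rho_2-\rho_1}{2}\,p.v.\!\int_{-\pi}^{\pi} \frac{\sin(f_1(\alpha)-f_1(\alpha-\beta))}{\cosh(f_2(\alpha)-f_2(\alpha-\beta))-\cos(f_1(\alpha)-f_1(\alpha-\beta))}\,d\beta\Bigr)\, \partial_{\tilde\alpha}^{n+1}\tilde f_i.
\end{equation*}
Hypothesis \eqref{extracondition3} is precisely the statement that at $\tilde\alpha = 0$ this coefficient has sign opposite to $b(t)$, so that the linearization of the evolution at the turnover point is a stable non-local parabolic operator in the correct direction. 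Away from $\tilde\alpha = 0$ the interface is already analytic by Theorem \ref{thm1}, so only contributions localized near the origin need to be controlled; a weighted $L^2$ energy in which the $n$-th derivative carries weight $(\epsilon_2(t) x^2)^n / n!$ would be the natural device for carrying the induction in $n$.

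The main obstacle is closing this induction while tracking the interaction of the singular kernel in $\mathcal{N}_i$ with the quadratic weights. Near the turnover point a Taylor expansion shows that $\cosh(\Delta\tilde f_2)-\cos(\Delta\tilde f_1)$ vanishes on scales comparable to the parabolic scale that defines $\Omega(t)$; any mismatch between this scale and the Cauchy weights destroys the induction. Choosing $\epsilon_1(t)$ and $\epsilon_2(t)$ so the weighted Leibniz-type commutator estimates close, and arranging the balance so the destabilizing contribution from the transport term $-\dot Z_1 \partial_{\tilde\alpha}^{n+1}\tilde f$ is absorbed by the dissipative one produced by $\mathcal{N}_i$ via \eqref{extracondition3}, is the core calculation; it uses all three hypotheses \eqref{extracondition1}--\eqref{extracondition3} in an essential way, with \eqref{extracondition1} ensuring the correct quadratic degeneration scale, \eqref{extracondition2} reducing the analysis to a single turnover point, and \eqref{extracondition3} fixing the sign that makes the energy method work.
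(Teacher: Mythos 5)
The statement you are proving---Theorem~\ref{muskatnear}---is not proved in this paper at all; the paper states it and explicitly cites the forthcoming work \cite{muskatregulatiryturnover2} for its proof. So there is no in-paper argument to compare against. What can be said is that your approach (weighted Cauchy estimates $|\partial_{\tilde\alpha}^n\tilde f|\lesssim M\,n!/(\epsilon_2 x^2)^n$, propagated by an induction on $n$) is genuinely different from the machinery this paper develops for Theorem~\ref{thm1}, namely analytic continuation along the family of curves $\alpha+ic(\alpha)\gamma t$, existence along each curve via a localized energy estimate and G\r{a}rding's inequality, and verification of the Cauchy--Riemann equations. Given that framework, the expected route to Theorem~\ref{muskatnear} would be to choose a weight $c(\alpha)$ vanishing quadratically at $Z_1(t)$ and rerun the curve argument, which directly produces the parabolic-cusp region $\Omega$; your Cauchy-estimate route aims at the same region but through a completely separate mechanism.

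Beyond that framing, your sketch has a concrete gap in the step you yourself flag as the core calculation. You identify the leading-order term after $n$ differentiations as the transport piece $\bigl(-\dot Z_1 + v_1\bigr)\partial_{\tilde\alpha}^{n+1}\tilde f$ and then assert that hypothesis \eqref{extracondition3} turns this into a dissipative contribution with the right sign. But a transport term is not dissipative, regardless of the sign of its coefficient; in the Muskat equation the parabolic smoothing comes from a $\Lambda$-type (half-Laplacian) contribution whose coefficient is, up to normalization, the Rayleigh--Taylor coefficient $\sigma\propto\partial_\alpha f_1$, and this coefficient vanishes identically at the turnover point $Z_1(t)$. That vanishing is the essential degeneration, and your induction gives no replacement source of coercivity at exactly the point where your quadratic weight also degenerates. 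Condition \eqref{extracondition3} compares the speed of the turnover point with the fluid velocity there and controls how the degenerate set and the would-be analyticity region move relative to each other; it is not a sign condition that makes the transport term absorb the top-order derivative loss. As written, the energy inequality you describe does not close, and the proposal would need either a genuine degenerate-parabolic mechanism replacing the missing $\Lambda$ term near $Z_1(t)$ or a switch to the curve-based method of the paper with a quadratically vanishing $c(\alpha)$.
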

\color{black} 
\subsection{Background}
In order to make the equation well-defined, the arc-chord condition is introduced, saying
\[
F(f)=|\frac{\beta^2}{\cosh(f_2(\alpha)-f_2(\alpha-\beta))-\cos(f_{1}(\alpha)-f_{1}(\alpha-\beta))}|
\]
is in $L^{\infty}$.

The Rayleigh-Taylor coefficient $\sigma$ is used to characterize the three regimes in Figure \ref{fig_turning over1} and is defined as 
\begin{equation}
\sigma=\frac{\rho_2-\rho_1}{2}\frac{\partial_{\alpha}f_{1}(\alpha,t)}{(\partial_{\alpha}f_1(\alpha,t))^2+(\partial_{\alpha}f_2(\alpha,t))^2}.
\end{equation}
$\sigma\geq 0$ is corresponding to the stable regime and $\sigma\leq 0$ the backward stable regime. When $\sigma$ changes sign, it is in the unstable regime.

In the stable regime (heavier liquid is below the lighter liquid), local well-posedness and the global well-posedness with constraints on the initial data have been widely studied, with the lowest space $H^{\frac{3}{2}}$   
(\cite{Yifahuaimuskatlocalexistence},\cite{YI2003442},\cite{Ambrose2004WellposednessOT},\cite{MichaelRusselHowisonmuskat},\cite{CordobaGancedocontourdynamicsmuskat},\cite{CordobaGancedomuskatmaximum},\cite{Constantin2013OnTG},\cite{CordobaCordobaGancedomuskatexistence},\cite{CHENG201632},\cite{constantincordobagancedostrainmuskatexistence},\cite{CONSTANTIN20171041},\cite{stephenmuskatexistence},\cite{Deng2016OnTT},\cite{DiegoOmarmuskatexistence},\cite{ Matioc:local-existence-muskat-hs},\cite{NguyenBenoitparadifferentialmuskat},\cite{alazardomarparalinearizationmuskat},\cite{abels_matioc_2022},\cite{muskatc12021chenquocxu},\cite{NGUYEN2022108122},\cite{alazardthomascritialspacemuskat},\cite{Alazard-Nguyennonlipshitzmuskat}, \cite{Alazard2020EndpointST}). The existence of self-similar solutions has also been proved \cite{2021selfsimilar}. Interesting readers can see \cite{2021selfsimilar} and \cite{muskatc12021chenquocxu} for detailed reviews. Due to the parabolic behavior, instant analyticity has been proved in the stable regime. Castro--Córdoba--Fefferman--Gancedo--López-Fernández \cite{Castro-Cordoba-Fefferman-Gancedo-LopezFernandez:rayleigh-taylor-breakdown} proved the $H^4$ solutions become instantly analytic if the solutions remain to be in the stable region for a short time. In \cite{Matioc:local-existence-muskat-hs}, also in the stable region,  Matioc improved the instant analyticity to $H^{s}$, where  $s\in (\frac{3}{2},3)$. In \cite{GANCEDO2019552}, 
Gancedo--García-Juárez--Patel--Strain showed that in the stable regime, a medium size initial data in $\dot{\mathcal{F}^{1,1}}\cap L^2$ with $\|f\|_{\mathcal{F}^{1,1}}=\int|\zeta||\hat{f}(\zeta)|d\zeta$ becomes instantly analytic. Their result also covers the different viscosities case and the 3D case. 

When the heavier liquid is above the lighter liquid,  the equation is ill-posed when time flows forward \cite{CordobaGancedocontourdynamicsmuskat}.

A solution that starts from a stable regime and develops turnover points was first discovered in \cite{Castro-Cordoba-Fefferman-Gancedo-LopezFernandez:rayleigh-taylor-breakdown}. That solution still exists for a short time after turnover due to the analyticity when the turnover happens. Moreover, breakdown of smoothness can happen \cite{Castro-Cordoba-Fefferman-Gancede}. There are also examples where the solutions transform from stable to unstable and go back to stable \cite{Cordoba-GomezSerrano-Zlatos:stability-shifting-muskat} and vice versa \cite{Cordoba-GomezSerrano-Zlatos:stability-shifting-muskat-II} .



\color{black}


\subsection{The outline of the proof of Theorem \ref{thm1}}

     Inspired by the instant analyticity results in the stable case \cite{Castro-Cordoba-Fefferman-Gancedo-LopezFernandez:rayleigh-taylor-breakdown},  \cite{Matioc:local-existence-muskat-hs}, \cite{GANCEDO2019552}, our first idea is localization. If locally the  lighter liquid is over the heavier one, we let the time go forward, and if locally the heavier one is over the lighter one, we let the time go backward. 
  
  Since it leads to lots of difficulties by the standard method due to the localization, we use a new idea to prove analyticity except at turnover points. The idea is to make a $C^1$ continuation of the parametrized interface $\alpha\to(f_1(\alpha,t),f_2(\alpha,t))$ to complex $\alpha$ and then prove the $C^1$ continuation satisfies the Cauchy-Riemann equation. To do so, we break the complex region into curves $\alpha+ic(\alpha)\gamma t$ with $\gamma \in [-1,1]$. On each such curve, we solve an equation for $(f_1,f_2).$ We then show when $\gamma$ varies, that our solutions on the curve fit together into an $C^1$ function of $\alpha+i\beta$. Finally, we prove that $C^1$ function satisfies the Cauchy-Riemann equation, thus producing the desired analytic continuation.
  

In Section \ref{localizefar}, we define a cut off function $\lambda(\alpha)$ and focus on $f^c(\alpha,t)=\lambda(\alpha)f(\alpha,t)$. We then localize the equation such that the modified $R$-$T$ condition has a fixed sign.   In order to make use of the sign, if the sign is positive, we let the time go forward. If the sign is negative, we let the time go backward.

In Section \ref{extenedequationfar}, we introduce $c(\alpha)$ with $\supp c(\alpha)\subset\{\alpha|\lambda(\alpha)=1\}$. With the assumption that $f^c(\alpha,t)$ is analytic in domain $D_A=\{\alpha+i\beta|-c(\alpha)t\leq \beta\leq c(\alpha)t\}$, we derive the equation on the curve $\{(\alpha+ic(\alpha)\gamma t)|\alpha\in[-\pi,\pi]\}$ for fixed $\gamma \in [-1,1]$. Then we obtain the equation 
\begin{equation}\label{Tequation0intro}
\frac{d}{dt}z(\alpha,\gamma,t)=T(z(\alpha,\gamma,t),t),
\end{equation}
with $z(\alpha,\gamma,0)=f^c(\alpha,0)$. The analyticity assumption on $f^c$ is dropped after we get \eqref{Tequation0intro}.  
\begin{figure}[h!]
\begin{center}
\includegraphics[scale=0.8]{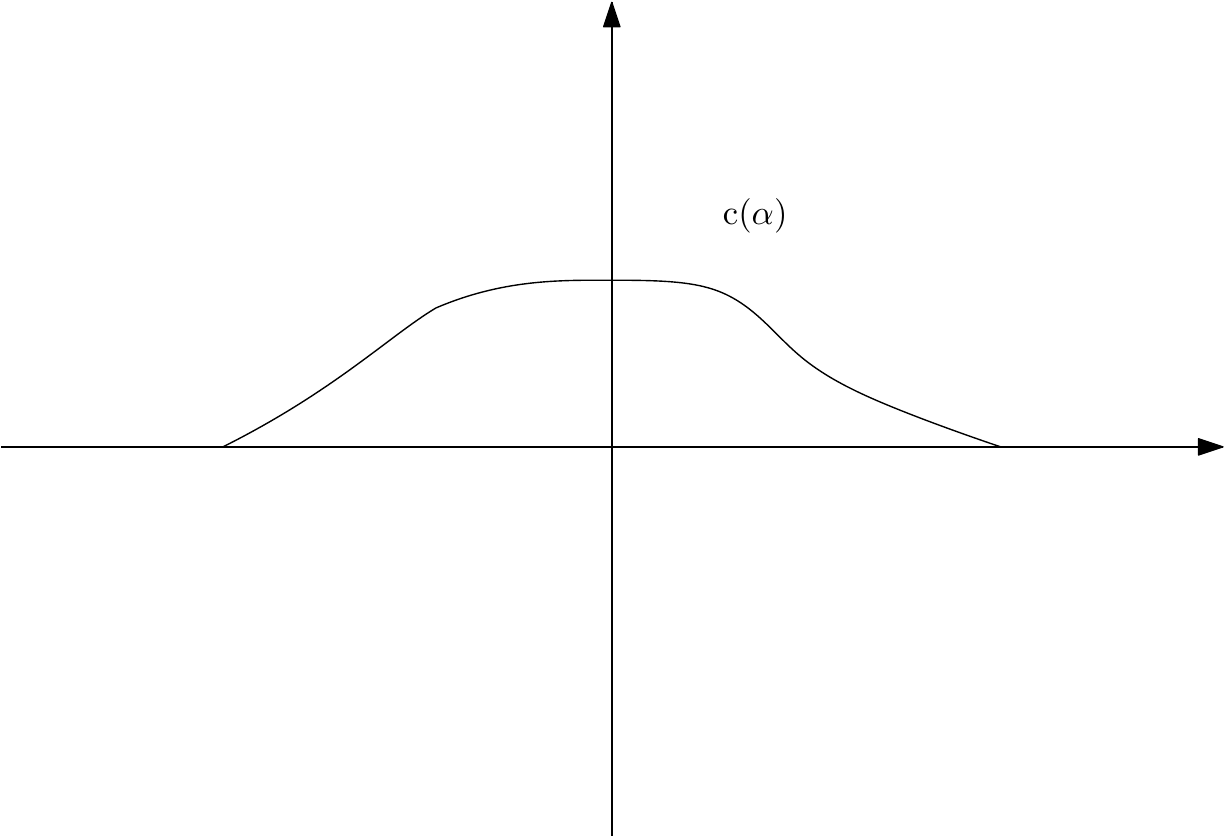}
\caption{\label{fig_Da}The curve $c(\alpha)$ for Theorem \ref{thm1}.}
\end{center}
\end{figure}

In Section \ref{existencefar},  for each fixed $\gamma$, we use the energy estimate and the Galerkin method to show the existence of the solution $z(\alpha,\gamma,t)$. The main term is controlled by G\r{a}rding's inequality, where we use a lemma from \cite{Castro-Cordoba-Fefferman-Gancede}. This part is similar as in \cite{Castro-Cordoba-Fefferman-Gancede} and \cite{Castro-Cordoba-Fefferman-Gancedo-LopezFernandez:rayleigh-taylor-breakdown}.

In Section \ref{uniquenessfar}, Section \ref{continuityfar}, and  Section \ref{diffrenfar}, we verify that the $z(\alpha,0,t)$ coincides with the $f^c(\alpha,t)$ and that $z(\alpha,\gamma,t)$ is also smooth enough with respect to $\gamma$.

In Section \ref{analyticityfar}, we derive some lemmas about the Cauchy-Riemann operator and use those lemmas to show analyticity of $z(\alpha,\frac{\beta}{c(\alpha )t}\gamma,t)$ by checking it satisfies the Cauchy-Riemann equations.

\begin{rem}
 In \cite{Castro-Cordoba-Fefferman-Gancedo-LopezFernandez:rayleigh-taylor-breakdown}, the analyticity domain can be chosen as a strip, and the analyticity follows directly from existence. Since our $c(\alpha)$ is supported in a small region, we do not have such good behavior.
\end{rem}

  

\section{Notation}
In the paper we will use the following notations:

$\delta$: a sufficiently small number.

$\lambda(\alpha)$: $\lambda(\alpha)\geq 0$ and in $C^{100}(-\infty,\infty)$, satisfying
\begin{align*}
    \lambda(\alpha)=\begin{cases}1&|\alpha|\leq \delta,\\0&|\alpha|\geq 2\delta.
\end{cases}
\end{align*}

$\delta_c$:  sufficiently small number depending on $\delta$.

$c(\alpha)$:
\begin{align*}
\Bigg\{\begin{array}{cc}
          c(\alpha)=\delta_c ,  \text{ when }|\alpha| \leq \frac{\delta}{32},\\
          \supp c(\alpha) \subset [-\frac{\delta}{8},\frac{\delta}{8}],\\
          c(\alpha)\geq 0, c(\alpha) \in C^{100}(-\infty,\infty), \|c(\alpha)\|_{C^{100}(-\infty,\infty)}\leq \delta.
    \end{array}
    \end{align*}

$f(\alpha,t)=(f_1(\alpha,t),f_2(\alpha,t))$: the original solution of the Muskat equation.

$f^c(\alpha,t)$, $\tilde{f}(\alpha,t)$:
\[
f^c(\alpha,t)=\lambda(\alpha)f(\alpha,t),
\]
\[
\tilde{f}(\alpha,t)=(1-\lambda(\alpha))f(\alpha,t).
\]

$t_0$: the original solution exists when $t\in[-t_0,t_0]$.

$D_A$: $D_A=\{(\alpha+i\beta)|-\infty < \alpha< \infty, -c(\alpha)t\leq \beta \leq c(\alpha)t\}.$

For any vector function $z =(z_1,z_2)\in H^{k}$: $z_1\in H^{k}$ and $z_2\in H^{k}$. 



\section{The localization}\label{localizefar}
This step is to localize the equation such that the $R$-$T$ coefficient has a fixed sign. Without loss of generality, we study the behavior at origin and let $\frac{\rho_2-\rho_1}{2}=1$.
Let $\lambda(\alpha) \in C^{100}(-\infty,\infty)$ satisfying $\lambda(\alpha)\geq 0$ and
\begin{align*}
    \lambda(\alpha)=\begin{cases}1&|\alpha|\leq \delta,\\0&|\alpha|\geq 2\delta,
\end{cases}
\end{align*} and $f^{c}(\alpha,t)=f(\alpha,t)\lambda(\alpha)$, $ \tilde{f}(\alpha,t)=f(\alpha,t)(1-\lambda(\alpha))$. Here $\delta$ is a sufficiently small number such that when $\alpha \in[-2\delta,2\delta]$, $\partial_{\alpha}f_1(\alpha,0)$ has a fixed sign. Without loss of generality, we assume
\begin{equation}\label{RTlocal}
    \partial_{\alpha}f_1(\alpha,0)>0.
\end{equation}

Then we have
\begin{align}\label{fequation}
\frac{\partial f^c_{\mu}}{\partial t}=\lambda(\alpha)\int_{-\pi}^{\pi} \frac{\sin(f_1^{c}(\alpha)-f_1^c(\beta)+\tilde{f_1}(\alpha)-\tilde{f_1}(\beta))(\partial_{\alpha}f^c_{\mu}(\alpha)-\partial_{\alpha}f^c_{\mu}(\beta))}{\cosh(f_2^c(\alpha)-f_2^c(\beta)+\tilde{f}_2(\alpha)-\tilde{f_2}(\beta))-\cos(f_{1}^c(\alpha)-f_{1}^c(\beta)+\tilde{f_1}(\alpha)-\tilde{f_1}(\beta)))}d\beta+\\
\lambda(\alpha)\int_{-\pi}^{\pi} \frac{\sin(f_1^{c}(\alpha)-f_1^c(\beta)+\tilde{f_1}(\alpha)-\tilde{f_1}(\beta))(\partial_{\alpha}\tilde{f}_{\mu}(\alpha)-\partial_{\alpha}\tilde{f}_{\mu}(\beta))}{\cosh(f_2^c(\alpha)-f_2^c(\beta)+\tilde{f}_2(\alpha)-\tilde{f_2}(\beta))-\cos(f_{1}^c(\alpha)-f_{1}^c(\beta)+\tilde{f_1}(\alpha)-\tilde{f_1}(\beta)))}d\beta.\nonumber
\end{align}
 We have $f^c\in C^1([0,t_0], (H^6(\mathbb{T}))^2)$, $\tilde{f}-(\alpha,0)\in C^1([0,t_0], (H^6(\mathbb{T}))^2)$. Here $\mathbb{T}$ is the torus of 2$\pi$.

\section{The equation on the complex plane}\label{extenedequationfar}
\subsection{Change the contour}
Let $c(\alpha)$ satisfy
\begin{align}\label{deltac}
\Bigg\{\begin{array}{cc}
          c(\alpha)=\delta_c ,  \text{ when }|\alpha| \leq \frac{\delta}{32},\\
          \supp c(\alpha) \subset [-\frac{\delta}{8},\frac{\delta}{8}],\\
          c(\alpha)\geq 0, c(\alpha) \in C^{100}(-\infty,\infty), \|c(\alpha)\|_{C^{100}(-\infty,\infty)}\leq \delta.
    \end{array}
\end{align}
Here $c(\alpha)$ is defined such that $\tilde{f}, \lambda$ can be analytically extended to the complex domain $D_A=\{(\alpha+i\beta)|-\infty < \alpha< \infty, -c(\alpha)t\leq \beta \leq c(\alpha)t\}$ and satisfy 
\begin{align}\label{tildefana}
\tilde{f}(\alpha+ic(\alpha)\gamma t,t)=\tilde{f}(\alpha,t),
\end{align}

and
\begin{align}\label{lambdaana}
\lambda(\alpha+ic(\alpha)\gamma t)=\lambda(\alpha),
\end{align}
for any $\gamma\in[-1,1].$

Now we assume $f^c$ is also analytic in this complex domain $D_A$. For any fixed $\gamma \in [-1,1]$, we want to find the new equation on the contour $\{\alpha+ic(\alpha)\gamma t|\alpha\in[-\pi,\pi]\}$. Let $\alpha_{\gamma}^t=\alpha+ic(\alpha)\gamma t$. We have
\begin{align}\label{changecontourequa}
&\frac{d f_{\mu}^c(\alpha_{\gamma}^t, t)}{dt}=ic(\alpha)\gamma(\partial_{\alpha}f_{\mu}^c)(\alpha_{\gamma}^t, t)+(\partial_{t}f_{\mu}^c)(\alpha_{\gamma}^t, t)\\\nonumber
&=ic(\alpha)\gamma(\partial_{\alpha}f_{\mu}^c)(\alpha_{\gamma}^t, t)\\\nonumber
&+\lambda(\alpha_{\gamma}^t)\int_{-\pi}^{\pi} \frac{\sin(f_1^{c}(\alpha_{\gamma}^t,t)-f_1^c(\beta_{\gamma}^t,t)+\tilde{f_1}(\alpha_{\gamma}^t,t)-\tilde{f_1}(\beta_{\gamma}^t,t))((\partial_{\alpha}f^{c}_{\mu})(\alpha_{\gamma}^t,t)-(\partial_{\beta}f^{c}_{\mu})(\beta_{\gamma}^t,t))}{\cosh(f_2^{c}(\alpha_{\gamma}^t,t)-f_2^c(\beta_{\gamma}^t,t)+\tilde{f_2}(\alpha_{\gamma}^t,t)-\tilde{f_2}(\beta_{\gamma}^t,t))-\cos(f_1^{c}(\alpha_{\gamma}^t,t)-f_1^c(\beta_{\gamma}^t,t)+\tilde{f_1}(\alpha_{\gamma}^t,t)-\tilde{f_1}(\beta_{\gamma}^t,t))}\\\nonumber
&(1+ic'(\beta)\gamma t)d\beta\\\nonumber
&+\lambda(\alpha_{\gamma}^t)\int_{-\pi}^{\pi} \frac{\sin(f_1^{c}(\alpha_{\gamma}^t,t)-f_1^c(\beta_{\gamma}^t,t)+\tilde{f_1}(\alpha_{\gamma}^t,t)-\tilde{f_1}(\beta_{\gamma}^t,t))((\partial_{\alpha}\tilde{f_{\mu}})(\alpha_{\gamma}^t,t)-(\partial_{\beta}\tilde{f_{\mu}})(\beta_{\gamma}^t,t))}{\cosh(f_2^{c}(\alpha_{\gamma}^t,t)-f_2^c(\beta_{\gamma}^t,t)+\tilde{f_2}(\alpha_{\gamma}^t,t)-\tilde{f_2}(\beta_{\gamma}^t,t))-\cos(f_1^{c}(\alpha_{\gamma}^t,t)-f_1^c(\beta_{\gamma}^t,t)+\tilde{f_1}(\alpha_{\gamma}^t,t)-\tilde{f_1}(\beta_{\gamma}^t,t))}\\\nonumber
&(1+ic'(\beta)\gamma t)d\beta.
\end{align}
\subsection{The equation on the curve}
Let $z(\alpha,\gamma,t)$ be the solution of the equation \eqref{changecontourequa} with initial data $z(\alpha,\gamma,0)=f^c(\alpha,0)$. Our motivation is to set $z(\alpha,\gamma,t)=f^{c}(\alpha+ic(\alpha)\gamma t,t).$ Since 
\[
(\partial_{\alpha}f^c_{\mu})(\alpha_{\gamma}^t,t)=\frac{\partial_{\alpha}(f^c_{\mu}(\alpha_{\gamma}^t,t))}{1+ic'(\alpha)\gamma t},
\]
we have
\begin{align}\label{equationz}
&\frac{d z_{\mu}(\alpha, \gamma,t)}{dt}=\frac{ic(\alpha)\gamma}{1+ic^{'}(\alpha)\gamma t}\partial_{\alpha}z_{\mu}(\alpha,\gamma,t)+\\\nonumber
&\lambda(\alpha_{\gamma}^{t})\int_{-\pi}^{\pi} \frac{\sin(z_{1}(\alpha,\gamma,t)-z_{1}(\beta,\gamma,t)+\tilde{f_1}(\alpha_{\gamma}^t,t)-\tilde{f_1}(\beta_{\gamma}^t,t))(\frac{\partial_{\alpha}z_{\mu}(\alpha,\gamma,t)}{1+ic'(\alpha)\gamma t}-\frac{\partial_{\beta}z_{\mu}(\beta,\gamma,t)}{1+ic'(\beta)\gamma t})(1+ic'(\beta)\gamma t)d\beta}{\cosh(z_{2}(\alpha,\gamma,t)-z_{2}(\beta,\gamma,t)+\tilde{f_2}(\alpha_{\gamma}^t,t)-\tilde{f_2}(\beta_{\gamma}^t,t))-\cos(z_{1}(\alpha,\gamma,t)-z_{1}(\beta,\gamma,t)+\tilde{f_1}(\alpha_{\gamma}^t,t)-\tilde{f_1}(\beta_{\gamma}^t,t))}\\\nonumber
&+\lambda(\alpha_{\gamma}^t)\int_{-\pi}^{\pi} \frac{\sin(z_{1}(\alpha,\gamma,t)-z_{1}(\beta,\gamma,t)+\tilde{f_1}(\alpha_{\gamma}^t,t)-\tilde{f_1}(\beta_{\gamma}^t,t))((\partial_{\alpha}\tilde{f_{\mu}})(\alpha_{\gamma}^t,t)-(\partial_{\beta}\tilde{f_{\mu}})(\beta_{\gamma}^t,t))(1+ic'(\beta)\gamma t)d\beta}{\cosh(z_{2}(\alpha,\gamma,t)-z_{2}(\beta,\gamma,t)+\tilde{f_2}(\alpha_{\gamma}^t,t)-\tilde{f_2}(\beta_{\gamma}^t,t))-\cos(z_{1}(\alpha,\gamma,t)-z_{1}(\beta,\gamma,t)+\tilde{f_1}(\alpha_{\gamma}^t,t)-\tilde{f_1}(\beta_{\gamma}^t,t))}
\end{align}
with
\[
z(\alpha,\gamma,0)=f(\alpha,0).
\]
We drop the  analyticity assumption of $f^c$ from now. Notice that $\tilde{f}$ and $\lambda$ can still be analytically extended to $D_A$ as in \eqref{tildefana} and \eqref{lambdaana}.
\section{The existence of z for fixed $\gamma$}\label{existencefar}
\subsection{Energy estimate}
We first assume $z$ is of finite Fourier modes here and do the energy estimate. The idea of the energy estimate is similar as in \cite{Castro-Cordoba-Fefferman-Gancedo-LopezFernandez:rayleigh-taylor-breakdown} and \cite{Castro-Cordoba-Fefferman-Gancede}.

Since $\tilde{f}(\alpha+ic(\alpha)\gamma t,t)=\tilde{f}(\alpha,t)$, $\lambda(\alpha+ic(\alpha)\gamma t)=\lambda(\alpha)$, we have
\begin{align}\label{zequationsimplify}
&\frac{d z_{\mu}(\alpha,\gamma,t)}{dt}=T(z)\\\nonumber
&=\frac{ic(\alpha)\gamma}{1+ic^{'}(\alpha)\gamma t}\partial_{\alpha}z_{\mu}(\alpha)+\\\nonumber
&\lambda(\alpha)\int_{-\pi}^{\pi} \frac{\sin(z_{1}(\alpha)-z_{1}(\beta)+\tilde{f_1}(\alpha)-\tilde{f_1}(\beta))(\frac{\partial_{\alpha}z_{\mu}(\alpha)}{1+ic'(\alpha)\gamma t}-\frac{\partial_{\beta}z_{\mu}(\beta)}{1+ic'(\beta)\gamma t})(1+ic'(\beta)\gamma t)}{\cosh(z_{2}(\alpha)-z_{2}(\beta)+\tilde{f_2}(\alpha)-\tilde{f_2}(\beta))-\cos(z_{1}(\alpha)-z_{1}(\beta)+\tilde{f_1}(\alpha)-\tilde{f_1}(\beta))}d\beta\\\nonumber
&+\lambda(\alpha)\int_{-\pi}^{\pi} \frac{\sin(z_{1}(\alpha)-z_{1}(\beta)+\tilde{f_1}(\alpha)-\tilde{f_1}(\beta))((\partial_{\alpha}\tilde{f_{\mu}})(\alpha)-(\partial_{\beta}\tilde{f_{\mu}})(\beta))(1+ic'(\beta)\gamma t)}{\cosh(z_{2}(\alpha)-z_{2}(\beta)+\tilde{f_2}(\alpha)-\tilde{f_2}(\beta))-\cos(z_{1}(\alpha)-z_{1}(\beta)+\tilde{f_1}(\alpha)-\tilde{f_1}(\beta))}d\beta.
\end{align}
Here we omit the dependency of $z$ on $\gamma$ and $t$, and the dependency of $\tilde{f}$ on $t$ for the sake of simplicity.
Let 
\[
U=(H^5(\mathbb{T}))^2,
\]
where $\mathbb{T}$ is the torus of length $2\pi$
and
\[
\|z\|_{Arc}=\sup_{\alpha\in[-2\delta,2\delta],\beta\in[-\pi,\pi]}\frac{(\alpha-\beta)^2}{|\cosh(z_{2}(\alpha)-z_{2}(\beta)+\tilde{f_1}(\alpha)-\tilde{f_1}(\beta))-\cos(z_{1}(\alpha)-z_{1}(\beta)+\tilde{f_1}(\alpha)-\tilde{f_1}(\beta))|}.
\]
For the $L^2$ norm, we have
\[
\|T(z)\|_{L^2[-\pi,\pi]}\lesssim\|T(z)\|_{L^\infty[-\pi,\pi]}\lesssim C(\|z\|_{U})\|z\|_{Arc}.
\]
Here $C$ is a bounded function depending on $\delta$, $\delta_c$ and $\|f\|_{C^1([0,t],(H^6[-\pi,\pi])^2)}$. We will keep using the same notation $C$ in the following proof.

Now we take 5th derivative and have
\begin{align*}
   &\partial_{\alpha}^5T(z)=\\
&=\frac{ic(\alpha)\gamma}{1+ic^{'}(\alpha)\gamma t}\partial_{\alpha}^{6}z_{\mu}(\alpha)+\\
&\lambda(\alpha)\int_{-\pi}^{\pi} \frac{\sin(z_{1}(\alpha)-z_{1}(\beta)+\tilde{f_1}(\alpha)-\tilde{f_1}(\beta))(\frac{\partial_{\alpha}^{6}z_{\mu}(\alpha)}{1+ic'(\alpha)\gamma t}-\frac{\partial_{\beta}^{6}z_{\mu}(\beta)}{1+ic'(\beta)\gamma t})(1+ic'(\beta)\gamma t)}{\cosh(z_{2}(\alpha)-z_{2}(\beta)+\tilde{f_1}(\alpha)-\tilde{f_1}(\beta))-\cos(z_{1}(\alpha)-z_{1}(\beta)+\tilde{f_1}(\alpha)-\tilde{f_1}(\beta))}d\beta\\
&+\lambda(\alpha)\int_{-\pi}^{\pi} \frac{\sin(z_{1}(\alpha)-z_{1}(\beta)+\tilde{f_1}(\alpha)-\tilde{f_1}(\beta))((\partial_{\alpha}^6\tilde{f_{\mu}})(\alpha)-(\partial_{\beta}^6\tilde{f_{\mu}})(\beta))(1+ic'(\beta)\gamma t)}{\cosh(z_{2}(\alpha)-z_{2}(\beta)+\tilde{f_1}(\alpha)-\tilde{f_1}(\beta))-\cos(z_{1}(\alpha)-z_{1}(\beta)+\tilde{f_1}(\alpha)-\tilde{f_1}(\beta))}d\beta\\
&+\sum_i O^i\\
&=T_1(z)+T_2(z)+T_3(z)+\sum_i O^i.
\end{align*}
Here $O^i$ terms contain at most 5th derivative on both $z$ and $\tilde{f}$.

Before we show the explicit form of $O^i$, we introduce some notations. 
Let 
\begin{align}\label{notationg}
V^{k}_g(\alpha) =(g_1(\alpha,t), g_2(\alpha,t), \partial_{\alpha}g_1(\alpha,t),\partial_{\alpha}g_2(\alpha,t), ...,\partial_{\alpha}^{k}g_1(\alpha,t),\partial_{\alpha}^{k}g_2(\alpha,t)),
\end{align}
\begin{align*}
\tilde{V}^k_{g}(\alpha)=(\frac{g_1(\alpha,t)}{1+ic'(\alpha)\gamma t},\frac{g_2(\alpha,t)}{1+ic'(\alpha)\gamma t},...,\partial_{\alpha}^k g_1(\alpha,t)\partial_{\alpha}^{5}(\frac{1}{1+ic'(\alpha)\gamma t}), \partial_{\alpha}^k g_2(\alpha,t)\partial_{\alpha}^{5}(\frac{1}{1+ic'(\alpha)\gamma t})).
\end{align*}
$V_{g,i}^k(\alpha)$ is the ith component in $V_{g}^k(\alpha)$ and  $\tilde{V}_{g,i}^k(\alpha)$ the ith component in $\tilde{V}_{g}^k(\alpha)$.

When we write $X_i(\alpha,t)$, we mean 
\begin{equation}\label{Xifunction}
X_i(\alpha,t)=\partial_{\alpha}^{l_i}(\frac{1}{1+ic'(\alpha)\gamma t}),
\end{equation}
with $l_i\leq 5$. 

A function $K_{-\sigma}^{j}(A,B)$, $K_{-\sigma}^{j}(A,B,C)$ is of $-\sigma$ type if for $A$, $B$, $C$ in $R^n$, it has the form
\begin{align}\label{k-sigma01}
&c_j\frac{\sin(A_1+B_1)^{m_1}\cos(A_1+B_1)^{m_2}}{(\cosh(A_2+B_2)-\cos(A_1+B_1))^{m_0}}\\\nonumber
    &\times(\sinh(A_2+B_2))^{m_3}(\cosh(A_2+B_2))^{m_4}\Pi_{j=1}^{m_5}(A_{\lambda_{j}})\Pi_{j=1}^{m_6}(B_{\lambda_{j,2}})\Pi_{j=1}^{m_7}(C_{\lambda_{j,3}}),
\end{align}
with $m_1+m_3+m_5+m_6+m_7-2m_0\geq -\sigma$. $c_j$ is a constant.

We claim that we can write $O^{i}$ as following three types, by separating the highest order term in the derivative. Here we omit the dependency on $\gamma$ and $t$.
\begin{align*}
O^{1,i}=\partial_{\alpha}^{b_i}\lambda(\alpha)\int_{-\pi}^{\pi}K_{-1}^{i}(V_z^{3}(\alpha)-V_z^{3}(\beta),V_{\tilde{f}}^{3}(\alpha)-V^{3}_{\tilde{f}}(\beta),\tilde{V}_{z}^3(\alpha)-\tilde{V}_{z}^3(\beta))X_{i'}(\beta)(\tilde{z}^{3}(\alpha)-\tilde{z}^3(\beta))d\beta,
\end{align*}
where $\tilde{z}^3\in V_z^{3} \cup  \tilde{V}_z^{3}\cup V_{\tilde{f}}^3$, $1\leq b_i\leq 5$.
 
\begin{align*}
O^{2,i}=\partial_{\alpha}^{b_i}\lambda(\alpha)\int_{-\pi}^{\pi}K_{-1}^{i}(V_z^{2}(\alpha)-V_z^{2}(\beta),V_{\tilde{f}}^{2}(\alpha)-V^{2}_{\tilde{f}}(\beta),\tilde{V}_{z}^2(\alpha)-\tilde{V}_{z}^2(\beta))X_{i'}(\beta)(\tilde{z}^{5}(\alpha)-\tilde{z}^5(\beta))d\beta,
\end{align*}
where $\tilde{z}^{5}\in V_z^{5} \cup  \tilde{V}_z^{5}\cup V_{\tilde{f}}^5$, $1\leq b_i\leq 5$. 

\begin{align*}
O^{3,i}=\partial_{\alpha}^{b_i}(\frac{ic(\alpha)\gamma}{1+ic^{'}(\alpha)\gamma t})\partial_{\alpha}^{b_i'}z_{\mu}(\alpha),
\end{align*}
where $1\leq b_i, b_i'\leq 5$.

Then we have the following lemmas
\begin{lemma}\label{01Oi}
We have
\[
\|O^i\|_{L^2[-\pi,\pi]}\lesssim C(\|z\|_{U}, \|z\|_{Arc}).
\]
\end{lemma}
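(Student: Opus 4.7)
The plan is to exploit the different structure of each type $O^{3,i}$, $O^{1,i}$, $O^{2,i}$ in turn; in all three cases, $\lambda$ and its derivatives provide support in $[-2\delta,2\delta]$.

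The term $O^{3,i}$ is handled directly: since $c\in C^{100}$ with $\|c\|_{C^{100}}\leq \delta$ and $|1+ic'(\alpha)\gamma t|\geq 1$, the coefficient $\partial_\alpha^{b_i}\bigl(ic(\alpha)\gamma/(1+ic'(\alpha)\gamma t)\bigr)$ is uniformly bounded by a constant depending on $\delta,\delta_c,t$. Combined with $b_i'\leq 5$, so that $\|\partial_\alpha^{b_i'}z_\mu\|_{L^2}\leq\|z\|_U$, this yields the claim immediately.

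For $O^{1,i}$, I would first show that on the support of $\lambda$ the $-1$-type kernel $K^i_{-1}$ is pointwise bounded by $C(\|z\|_U,\|z\|_{Arc})|\alpha-\beta|^{-1}$. This is obtained by estimating each ingredient of \eqref{k-sigma01}: the arc-chord bound gives $(\cosh(A_2+B_2)-\cos(A_1+B_1))^{-m_0}\leq \|z\|_{Arc}^{m_0}|\alpha-\beta|^{-2m_0}$; the factors $A_{\lambda_j},B_{\lambda_{j,2}},C_{\lambda_{j,3}}$ are components of differences of $V^3_z,V^3_{\tilde f},\tilde V^3_z$, which, since $z\in H^5$ and $\tilde f\in H^6$, lie in $H^2\subset C^{0,1}$ by Sobolev embedding, so each factor is bounded by $C|\alpha-\beta|$; and similarly $|\sin(A_1+B_1)|,|\sinh(A_2+B_2)|\leq C|\alpha-\beta|$. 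Combining these and using $m_1+m_3+m_5+m_6+m_7-2m_0\geq -1$ gives the claimed $|\alpha-\beta|^{-1}$ bound. The extra factor $\tilde z^3(\alpha)-\tilde z^3(\beta)$ supplies one more power of $|\alpha-\beta|$ by the same Lipschitz argument, so the whole integrand is pointwise bounded in $\alpha,\beta$, the $\beta$-integral is uniformly bounded in $\alpha$, and the compact support of $\partial_\alpha^{b_i}\lambda$ closes the $L^2_\alpha$ estimate.

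The term $O^{2,i}$ is the main obstacle, since $\tilde z^5(\alpha)-\tilde z^5(\beta)$ involves top-order derivatives which lie only in $L^2$, and the previous pointwise argument fails. The kernel analysis still yields $|K^i_{-1}(\alpha,\beta)X_{i'}(\beta)|\leq C(\|z\|_U,\|z\|_{Arc})|\alpha-\beta|^{-1}$ on the support of $\lambda$. I would then split
\[
\int_{-\pi}^{\pi} K^i_{-1}X_{i'}(\tilde z^5(\alpha)-\tilde z^5(\beta))\,d\beta \;=\; \tilde z^5(\alpha)\,\mathrm{p.v.}\!\int_{-\pi}^{\pi} K^i_{-1}X_{i'}\,d\beta \;-\; \mathrm{p.v.}\!\int_{-\pi}^{\pi} K^i_{-1}X_{i'}\tilde z^5(\beta)\,d\beta.
\]
The principal value is well-defined because the leading singular part of $K^i_{-1}$ can be extracted as $c_0(\alpha,\beta)/(\alpha-\beta)$ with $c_0$ smooth, plus a bounded remainder; this gives the Hilbert-transform structure on the support of $\lambda$. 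The first term on the right is $\tilde z^5(\alpha)\in L^2$ times a bounded function, while the second is a Calder\'on--Zygmund-type operator bounded on $L^2$ applied to $X_{i'}\tilde z^5\in L^2$. The main technical work is verifying the decomposition $K^i_{-1}=c_0/(\alpha-\beta)+$ bounded, which follows from Taylor expansion of the trigonometric/hyperbolic ingredients of \eqref{k-sigma01} around $\alpha=\beta$ together with the arc-chord bound. In both pieces the $L^2_\alpha$ norm is controlled by $C(\|z\|_U,\|z\|_{Arc})$, closing the estimate.
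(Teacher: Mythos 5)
Your proposal is correct in outline, and it follows the same overall strategy as the paper: handle $O^{3,i}$ directly, use the $-1$-type structure and the arc-chord norm to control the kernel for $O^{1,i}$ and $O^{2,i}$, and deal with the top-order difference $\tilde z^5(\alpha)-\tilde z^5(\beta)$ in $O^{2,i}$ by extracting the singular Hilbert-transform-like piece. The paper simply packages the technical steps into the appendix lemmas (Lemma \ref{goodterm3} for $O^{1,i}$ with $G\in C^0$, $g\in H^1$; Lemma \ref{goodterm1} for $O^{2,i}$ with $G\in C^1$, $g\in L^2$), while you re-derive them inline. For $O^{1,i}$ your direct pointwise argument is a mild sharpening of hypotheses (you use that $\tilde z^3$ is Lipschitz, whereas the paper only needs $\tilde z^3\in H^1$); both are available here since $z\in H^5$.

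For $O^{2,i}$ there is a small soft spot in your sketch. Writing
\[
\int K^i_{-1}X_{i'}(\tilde z^5(\alpha)-\tilde z^5(\beta))\,d\beta = \tilde z^5(\alpha)\,\mathrm{p.v.}\!\int K^i_{-1}X_{i'}\,d\beta - \mathrm{p.v.}\!\int K^i_{-1}X_{i'}\tilde z^5(\beta)\,d\beta
\]
is the right move, but the claim that the second piece is a bounded operator on $L^2$ does not follow from the decomposition $K^i_{-1}X_{i'}=c_0(\alpha,\beta)/(\alpha-\beta)+\text{bounded}$ with $c_0$ merely continuous; one needs $c_0$ to be $C^1$ (or a H\"ormander condition) so that one can write $c_0(\alpha,\beta)=c_0(\alpha,\alpha)+O(|\alpha-\beta|)$ and reduce to the Hilbert transform plus a Schur-bounded remainder. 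The paper's Lemma \ref{goodterm1} states exactly this with the right hypothesis: $G(\alpha,\beta)=(\alpha-\beta)K^i_{-1}X_{i'}\in C^1$ (not just $C^0$), and its two-line proof is precisely the freeze-at-the-diagonal decomposition you have in mind. So what the paper's cited lemma buys is a clean, minimal statement of the CZ step; what your version needs in addition is an explicit check that $(\alpha-\beta)K^i_{-1}X_{i'}\in C^1$ (which is true here, since the ingredients of $V_z^2, V_{\tilde f}^2, \tilde V_z^2$ are in $H^3\subset C^2$ and the arc-chord bound supplies the denominator), rather than only the $C^0$ estimate you actually wrote out.
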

\begin{proof}
Since $K_{-1}^{i}$ is of $-1$ type, we have
\[
\|K_{-1}^{i}(V_z^{3}(\alpha)-V_z^{3}(\beta),V_{\tilde{f}}^{3}(\alpha)-V^{3}_{\tilde{f}}(\beta),\tilde{V}_{z}^3(\alpha)-\tilde{V}_{z}^3(\beta))(\alpha-\beta)\|_{C^0[-2\delta,2\delta]\times[-\pi,\pi]}\lesssim C(\|z\|_{U},\|z\|_{Arc}),
\]
we could use lemma \ref{goodterm3} to get the result for $O^{1,i}$.
Moreover, we have
\[
\|K_{-1}^{i}(V_z^{2}(\alpha)-V_z^{2}(\beta),V_{\tilde{f}}^{2}(\alpha)-V^{2}_{\tilde{f}}(\beta),\tilde{V}_{z}^2(\alpha)-\tilde{V}_{z}^2(\beta))(\alpha-\beta)\|_{C^1[-2\delta,2\delta]\times[-\pi,\pi]}\lesssim C(\|z\|_{U},\|z\|_{Arc}),
\]
we then use lemma \ref{goodterm1} to get the estimate for $O^{2,i}$. $O^{3,i}$ can be bounded easily.
\end{proof}
\begin{lemma}\label{01tildefbound}
We have
\[
\|T_3(z)\|_{L^2[-\pi,\pi]}\lesssim C(\|z\|_{U}, \|z\|_{Arc}).
\]
\end{lemma}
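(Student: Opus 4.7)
The plan is to recognize $T_3(z)$ as an instance of the same structure handled by the $O^{2,i}$ terms in the proof of Lemma \ref{01Oi}, so that Lemma \ref{goodterm1} applies almost verbatim. First I would rewrite the integrand of $T_3(z)$ as
\[
\lambda(\alpha)\,K^{T_3}(\alpha,\beta)\,(1+ic'(\beta)\gamma t)\,\bigl(\partial_{\alpha}^{6}\tilde{f}_{\mu}(\alpha)-\partial_{\beta}^{6}\tilde{f}_{\mu}(\beta)\bigr),
\]
where
\[
K^{T_3}(\alpha,\beta)=\frac{\sin(z_1(\alpha)-z_1(\beta)+\tilde{f}_1(\alpha)-\tilde{f}_1(\beta))}{\cosh(z_2(\alpha)-z_2(\beta)+\tilde{f}_2(\alpha)-\tilde{f}_2(\beta))-\cos(z_1(\alpha)-z_1(\beta)+\tilde{f}_1(\alpha)-\tilde{f}_1(\beta))}.
\]
In the notation of \eqref{k-sigma01}, $K^{T_3}$ is of $-1$ type with $m_{1}=m_{0}=1$ and all other exponents vanishing, depending only on $V_{z}^{0}$ and $V_{\tilde{f}}^{0}$. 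The prefactor $(1+ic'(\beta)\gamma t)$ fits the template $X_{i'}(\beta)$ of \eqref{Xifunction} with $l_{i'}=0$, and $\partial_{\alpha}^{6}\tilde{f}_{\mu}\in L^{2}(\mathbb{T})$ plays the role of $\tilde{z}^{5}$.

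Next, I would verify the $C^{1}$-input hypothesis required by Lemma \ref{goodterm1}, namely
\[
\bigl\|K^{T_3}(\alpha,\beta)(\alpha-\beta)\bigr\|_{C^{1}([-2\delta,2\delta]\times[-\pi,\pi])}\lesssim C(\|z\|_{U},\|z\|_{Arc}).
\]
Because $K^{T_3}$ involves $z$ and $\tilde{f}$ only at order zero, differentiating once in $\alpha$ or $\beta$ brings down at most one derivative of $z$ or $\tilde{f}$, each uniformly bounded by Sobolev embedding ($H^{5}\subset C^{4}$ and $H^{6}\subset C^{5}$ on $\mathbb{T}$). The extra denominator powers produced by differentiation are balanced by additional $\sin$ or $(\alpha-\beta)$ factors arising from $\partial_{\alpha}(\cosh-\cos)$, and the leftover $(\alpha-\beta)^{2}/(\cosh-\cos)$ is absorbed into $\|z\|_{Arc}$.

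Applying Lemma \ref{goodterm1} then yields
\[
\|T_3(z)\|_{L^{2}[-\pi,\pi]}\lesssim C(\|z\|_{U},\|z\|_{Arc})\,\|\partial_{\alpha}^{6}\tilde{f}\|_{L^{2}}\lesssim C(\|z\|_{U},\|z\|_{Arc}),
\]
where the last inequality absorbs $\|\tilde{f}\|_{H^{6}}\lesssim\|f\|_{C^{1}([-t_{0},t_{0}],(H^{6}(\mathbb{T}))^{2})}$ into the constant $C$, which is already allowed to depend on the original Muskat solution.

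The main obstacle is purely bookkeeping: carefully matching the integrand of $T_3(z)$ to the formal hypotheses of Lemma \ref{goodterm1} and checking the asserted $C^{1}$ norm of $K^{T_3}(\alpha,\beta)(\alpha-\beta)$. No new analytic ingredient is needed; in fact $K^{T_3}$ is structurally simpler than the kernels appearing in the $O^{2,i}$ estimate, since no derivatives of $z$ or $\tilde{f}$ occur inside $K^{T_3}$ itself.
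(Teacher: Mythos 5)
Your proposal is correct and follows essentially the same route as the paper: both identify the kernel $K$ of $-1$ type, verify $\|K(\cdot)(\alpha-\beta)\|_{C^1([-2\delta,2\delta]\times[-\pi,\pi])}\lesssim C(\|z\|_{U},\|z\|_{Arc})$, and invoke Lemma~\ref{goodterm1} with $g=\partial_{\alpha}^{6}\tilde{f}_{\mu}\in L^2(\mathbb{T})$. The only cosmetic inaccuracies are that $(1+ic'(\beta)\gamma t)$ is not literally of the form $X_{i'}$ from \eqref{Xifunction} (which is a derivative of the reciprocal), and $\partial_\alpha^6\tilde{f}$ is one order higher than the $\tilde{z}^5$ slot in the $O^{2,i}$ template; neither affects the argument since the factor is smooth and Lemma~\ref{goodterm1} only needs $g\in L^2$.
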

\begin{proof}
Let 
\begin{align}\label{kequationnew}
K(z(\alpha)-z(\beta),\tilde{f}(\alpha)-\tilde{f}(\beta))=\frac{\sin(z_1(\alpha)-z_1(\beta)+\tilde{f}_1(\alpha)-\tilde{f}_1(\beta))}{\cosh(z_2(\alpha)-z_2(\beta)+\tilde{f}_2(\alpha)-\tilde{f}_2(\beta))-\cos(z_1(\alpha)-z_1(\beta)+\tilde{f}_1(\alpha)-\tilde{f}_1(\beta))}.
\end{align}
It is also of $K_{-1}$ type.
We have
\[
\|K(z(\alpha)-z(\beta),\tilde{f}(\alpha)-\tilde{f}(\beta))(\alpha-\beta)\|_{C^1[-2\delta,2\delta]\times[-\pi,\pi]}\lesssim C(\|z\|_{U},\|z\|_{Arc}).
\]
Then the result follows from lemma \ref{goodterm1}.
\end{proof}
Then we are left to deal with $T_1+T_2$.

By using the same notation as in lemma \ref{01tildefbound}, we have
\begin{align*}
   & T_2(z(\alpha))=\lambda(\alpha)p.v.\int_{-\pi}^{\pi} K(z(\alpha)-z(\beta),\tilde{f}(\alpha)-\tilde{f}(\beta))(1+ic'(\beta)\gamma t)d\beta\frac{\partial_{\alpha}^{6}z_{\mu}(\alpha)}{(1+ic'(\alpha)\gamma t)}\\
    &-\lambda(\alpha)p.v.\int_{-\pi}^{\pi} K(z(\alpha)-z(\beta),\tilde{f}(\alpha)-\tilde{f}(\beta))\partial_{\beta}^{6}z_{\mu}(\beta)d\beta\\
    &=T_{2,1}(z)(\alpha)+T_{2,2}(z)(\alpha).
\end{align*}
Moreover, we could further split the $T_{2,2}$ and have
\begin{align*}
    &T_{2,2}(z)(\alpha)= -\lambda(\alpha)p.v.\int_{-\pi}^{\pi} K(z(\alpha)-z(\beta),\tilde{f}(\alpha)-\tilde{f}(\beta))\partial_{\beta}^{6}z_{\mu}(\beta)d\beta\\
    &=-\lambda(\alpha)\lim_{\beta\to\alpha}(K(z(\alpha)-z(\beta),\tilde{f}(\alpha)-\tilde{f}(\beta))\tan(\frac{\alpha-\beta}{2}))p.v.\int_{-\pi}^{\pi}\cot(\frac{\alpha-\beta}{2})\partial_{\beta}^{6}z(\beta)d\beta\\
    &-\lambda(\alpha)\int_{-\pi}^{\pi}(K(z(\alpha)-z(\beta),\tilde{f}(\alpha)-\tilde{f}(\beta))\tan(\frac{\alpha-\beta}{2})-\lim_{\beta\to\alpha}(K(z(\alpha)-z(\beta),\tilde{f}(\alpha)-\tilde{f}(\beta))\tan(\frac{\alpha-\beta}{2})))\\
    &\cot(\frac{\alpha-\beta}{2})\partial_{\beta}^{6}z(\beta)d\beta\\
    &=T_{2,2,1}(z)+T_{2,2,2}(z).
\end{align*}
Since $K(z(\alpha)-z(\beta),\tilde{f}(\alpha)-\tilde{f}(\beta))$ is of $-1$ type, we have, 
\begin{align}\label{kcondition0}
  \|K(z(\alpha)-z(\beta),\tilde{f}(\alpha)-\tilde{f}(\beta))\tan(\frac{\alpha-\beta}{2})\|_{C^3_{\alpha,\beta}([-2\delta,2\delta]\times[-\pi,\pi])}\lesssim C(\|z\|_{Arc}+\|z\|_{U}).  
\end{align}
Let 
\begin{align*}
&\tilde{K}(z(\alpha)-z(\beta),\tilde{f}(\alpha)-\tilde{f}(\beta))\\
&=\frac{K(z(\alpha)-z(\beta),\tilde{f}(\alpha)-\tilde{f}(\beta))\tan(\frac{\alpha-\beta}{2})-\lim_{\beta\to\alpha}(K(z(\alpha)-z(\beta),\tilde{f}(\alpha)-\tilde{f}(\beta))\tan(\frac{\alpha-\beta}{2}))}{\tan(\frac{\alpha-\beta}{2})}.
\end{align*}
Then
\begin{align*}
  &\|\tilde{K}(z(\alpha)-z(\beta),\tilde{f}(\alpha)-\tilde{f}(\beta))\|_{C^2_{\alpha,\beta}([-2\delta,2\delta]\times[-\pi,\pi])}\\
  &\lesssim C(\|z\|_{Arc}+\|z\|_{U}). 
\end{align*}
We can do the integration by parts in the $T_{2,2,2}(z)$ and have
\begin{align*}
    &T_{2,2,2}(z)(\alpha)=-\lambda(\alpha)\int_{-\pi}^{\pi}\tilde{K}(z(\alpha)-z(\beta),\tilde{f}(\alpha)-\tilde{f}(\beta))\partial_{\beta}^{6}z(\beta)d\beta\\
    &=\lambda(\alpha)\int_{-\pi}^{\pi}\partial_{\beta}\tilde{K}(z(\alpha)-z(\beta),\tilde{f}(\alpha)-\tilde{f}(\beta))\partial_{\beta}^{5}z(\beta)d\beta.
\end{align*}
Therefore we have
\begin{align*}
    \|T_{2,2,2}(z)(\alpha)\|_{L^2[-\pi,\pi]}\lesssim C(\|z\|_{Arc}+\|z\|_{U})\|\partial_{\alpha}^{5}z(\alpha)\|_{L^2[-\pi,\pi]}.
\end{align*}
In conclusion, we have
\begin{align}\label{equationd5}
    &\frac{d\partial_{\alpha}^{5}z_{\mu}(\alpha,\gamma,t)}{dt}=T_1(z)+T_{2,1}(z)+T_{2,2,1}(z)+(T_{2,2,2}(z)+T_3(z)+\sum_{i}O^{i}(z))\\\nonumber
    &=(\frac{ic(\alpha)\gamma}{1+ic^{'}(\alpha)\gamma t}+\frac{\lambda(\alpha)}{1+ic'(\alpha)\gamma t}p.v.\int_{-\pi}^{\pi} K(z(\alpha)-z(\beta),\tilde{f}(\alpha)-\tilde{f}(\beta))(1+ic'(\beta)\gamma t)d\beta)\partial_{\alpha}^{6}z_{\mu}(\alpha)\\\nonumber
    &-\lambda(\alpha)\lim_{\beta\to\alpha}(K(z(\alpha)-z(\beta),\tilde{f}(\alpha)-\tilde{f}(\beta))\tan(\frac{\alpha-\beta}{2}))2\pi \Lambda(\partial_{\alpha}^5z)(\alpha)\\\nonumber
    &+(T_{2,2,2}(z)+T_3(z)+\sum_{i}O^{i}(z)),
\end{align}
where $\Lambda$ is $(-\Delta)^{\frac{1}{2}}$  on the Torus $\mathbb{T}$ of length $2\pi$ and  
\begin{align}\label{equationd5condition}
\|(T_{2,2,2}(z)+T_3(z)+\sum_{i}O^{i})\|_{L^2[-\pi,\pi]}\lesssim C(\|z\|_{Arc}+\|z\|_{U}).
\end{align}
Then we have
\begin{align}\label{energy main term}
    &\frac{d}{dt}\|\partial_{\alpha}^5z(\alpha,\gamma,t)\|_{L^2_{\alpha}[-\pi,\pi]}^2=2\Re\int_{-\pi}^{\pi}\partial_{\alpha}^5z(\alpha,\gamma,t)\cdot\overline{\partial_{\alpha}^5\frac{d}{dt}z(\alpha,\gamma,t)}d\alpha\\\nonumber
    &=\sum_{\mu=1,2}(\underbrace{2\Re\int_{-\pi}^{\pi}\partial_{\alpha}^5z_{\mu}(\alpha)\overline{(\frac{ic(\alpha)\gamma}{1+ic^{'}(\alpha)\gamma t}+\frac{\lambda(\alpha)}{1+ic'(\alpha)\gamma t}p.v.\int_{-\pi}^{\pi} K(z(\alpha)-z(\beta),\tilde{f}(\alpha)-\tilde{f}(\beta))(1+ic'(\beta)\gamma t)d\beta)}}_{\text{main term}}\\\nonumber
    &\underbrace{\overline{\cdot\partial_{\alpha}^{6}z_{\mu}(\alpha)}d\alpha}_{\text{main term}}\\\nonumber
    &\underbrace{-2\Re\int_{-\pi}^{\pi}\partial_{\alpha}^5z_{\mu}(\alpha)\overline{\lambda(\alpha)\lim_{\beta\to\alpha}(K(z(\alpha)-z(\beta),\tilde{f}(\alpha)-\tilde{f}(\beta))\tan(\frac{\alpha-\beta}{2}))2\pi \Lambda(\partial_{\alpha}^5z_{\mu})(\alpha)}d\alpha}_{\text{main term}})\\\nonumber
    &+B.T,
\end{align}
where $B.T.\leq C(\|z\|_{Arc}+\|z\|_{U}).$
Next we show the following lemma to control the main terms.
\begin{lemma}\label{garding apply}
If $L_1(\alpha)$, $L_2(\alpha)\in C^2(\mathbb{T})$, $-\Re L_1(\alpha)\geq |\Im L_2(\alpha)|$, $h\in H^1(\mathbb{T})$, then we have 
\begin{align*}
   \Re( \int_{-\pi}^{\pi}h(\alpha)\overline{L_1(\alpha)(\Lambda h)(\alpha)}d\alpha+\int_{-\pi}^{\pi}h(\alpha)\overline{L_2(\alpha)\partial_{\alpha}h(\alpha)}d\alpha)\leq C(\|L_1\|_{C^2(\mathbb{T})}+\|L_2\|_{C^2(\mathbb{T})})\|h(\alpha)\|_{L^2}^{2}.
    \end{align*}
\end{lemma}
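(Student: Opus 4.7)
My plan is to prove Lemma \ref{garding apply} by decomposing $h$ along the sign of its Fourier modes, reducing the two diagonal contributions to a C\'ordoba--C\'ordoba estimate, and bounding the off-diagonal cross terms directly on the Fourier side via Hilbert's inequality. Write $A:=L_1\Lambda+L_2\partial_\alpha$ and $h=h^{+}+h^{-}+h^{0}$ for the decomposition into strictly positive modes, strictly negative modes, and the mean. The key algebraic observation is that $\Lambda=-i\partial_\alpha$ on $h^{+}$ and $\Lambda=+i\partial_\alpha$ on $h^{-}$, so
\[
Ah^{+}=(L_1+iL_2)\,\Lambda h^{+},\qquad Ah^{-}=(L_1-iL_2)\,\Lambda h^{-},\qquad Ah^{0}=0.
\]
Expanding $\int h\,\overline{Ah}\,d\alpha$ then produces the diagonal terms $D_{\pm}=\int h^{\pm}\,\overline{Ah^{\pm}}\,d\alpha$, cross terms, and trivial $h^{0}$-interactions.

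For the diagonal I symmetrize using the self-adjointness of $\Lambda$:
\[
2\Re D_{+}=2\int(\Re L_1-\Im L_2)\,\Re(\overline{h^{+}}\Lambda h^{+})\,d\alpha+\Re\!\int\overline{h^{+}}\,[\Lambda,\bar L_1-i\bar L_2]h^{+}\,d\alpha,
\]
and analogously for $D_{-}$ with coefficient $\Re L_1+\Im L_2$. The commutator $[\Lambda,g]$ is bounded on $L^{2}$ with norm $\lesssim\|g\|_{C^{1}}$, so the commutator term is admissible. The hypothesis $-\Re L_1\ge|\Im L_2|$ forces $\Re L_1\mp\Im L_2\le 0$ pointwise, and the C\'ordoba--C\'ordoba inequality $\Re(\overline{g}\Lambda g)\ge\tfrac12\Lambda(|g|^{2})$ (which extends to complex $g$ by splitting into real and imaginary parts and applying the real-valued version separately) combined with this sign produces, after multiplying by the non-positive coefficient and moving $\Lambda$ onto the coefficient by self-adjointness, the bound
\[
2\Re D_{\pm}\le\bigl\|\Lambda(\Re L_1\mp\Im L_2)\bigr\|_{L^{\infty}}\|h^{\pm}\|_{L^{2}}^{2}+C(\|L_1\|_{C^{2}}+\|L_2\|_{C^{2}})\|h\|_{L^{2}}^{2}.
\]
Using $C^{2}(\mathbb T)\hookrightarrow H^{1+\varepsilon}(\mathbb T)\hookrightarrow L^{\infty}$ to control $\|\Lambda(\cdot)\|_{L^{\infty}}$, the right-hand side is $\lesssim(\|L_1\|_{C^{2}}+\|L_2\|_{C^{2}})\|h\|_{L^{2}}^{2}$.

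For the cross terms, a representative is $\int h^{+}\Psi\,\Lambda\overline{h^{-}}\,d\alpha$ with $\Psi\in\{\bar L_1\pm i\bar L_2\}$. Since $h^{+}$ and $\overline{h^{-}}$ are both supported on positive Fourier modes, this equals $2\pi\sum_{j,k>0}a_{k}\,j\,\bar a_{-j}\,\widehat{\Psi}\bigl(-(j+k)\bigr)$. The $C^{2}$ decay $|\widehat{\Psi}(n)|\lesssim(\|L_1\|_{C^{2}}+\|L_2\|_{C^{2}})/n^{2}$ together with $j/(j+k)^{2}\le 1/(j+k)$ reduces the sum to $C\sum_{j,k>0}|a_{k}||a_{-j}|/(j+k)$, and Hilbert's inequality yields $\lesssim\|h^{+}\|_{L^{2}}\|h^{-}\|_{L^{2}}\le\|h\|_{L^{2}}^{2}$. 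The $h^{0}$-interactions are handled by integration by parts, using $|h^{0}|\lesssim\|h\|_{L^{2}}$ and $\int Ah^{\pm}\,d\alpha=\int(\Lambda L_1-\partial_\alpha L_2)h^{\pm}\,d\alpha$ after integration by parts, which gives the same $C(\|L_1\|_{C^2}+\|L_2\|_{C^2})\|h\|_{L^{2}}^{2}$ bound.

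The main delicate point is the C\'ordoba--C\'ordoba step: one must carefully track the sign flip when multiplying the pointwise inequality $\Re(\overline{g}\Lambda g)\ge\tfrac12\Lambda(|g|^{2})$ by the non-positive coefficient $\Re L_1\mp\Im L_2$, since reversing the inequality is exactly what converts a gain of $\Lambda$ on the solution into a loss of $\Lambda$ on the coefficient. Once that sign is tracked, the full $C^{2}$ regularity of $L_{1},L_{2}$ is precisely what places $\Lambda(\Re L_1\mp\Im L_2)$ in $L^{\infty}(\mathbb T)$, closing the estimate.
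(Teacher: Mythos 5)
Your proof is correct, but it takes a genuinely different route from the paper's. The paper splits the integrand into real and imaginary parts, controls the lower-order contributions by an integration by parts and a single commutator estimate $\|\Lambda(g_1 g_2)-g_1\Lambda g_2\|_{L^2}\lesssim \|g_1\|_{H^2}\|g_2\|_{L^2}$, and then \emph{invokes} the real-valued G\r{a}rding inequality of Lemma \ref{Garding} (from \cite{Castro-Cordoba-Fefferman-Gancede}) to dispose of the sign-definite main terms in one stroke. You instead give a self-contained Fourier-analytic argument: the sign-of-mode decomposition $h=h^{+}+h^{-}+h^{0}$ diagonalizes the operator via $\Lambda=\mp i\partial_\alpha$ on $h^{\pm}$, so the diagonal pieces $D_{\pm}$ carry the coefficients $\Re L_1\mp\Im L_2\le 0$; the C\'ordoba--C\'ordoba pointwise inequality (extended to complex $g$), followed by moving $\Lambda$ onto the coefficient, converts the bad high-order term into $\Lambda(\Re L_1\mp\Im L_2)\in L^{\infty}$; and the off-diagonal pieces are absorbed by Hilbert's inequality after exploiting the quadratic decay of the Fourier coefficients of a $C^2$ multiplier. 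Your algebra checks out (I verified the identities $Ah^{\pm}=(L_1\pm iL_2)\Lambda h^{\pm}$, the symmetrization formula $2\Re D_{\pm}$, the sign of $\Re L_1\mp\Im L_2$, and the Fourier representation of the cross term with the $j/(j+k)^2\le 1/(j+k)$ reduction). The only technical point you gloss over is that the pointwise C\'ordoba--C\'ordoba inequality requires more smoothness than $H^1$ to state pointwise; a density argument (both sides of the final estimate are continuous in $\|h\|_{H^1}$) closes this without difficulty. What your approach buys is transparency about \emph{why} the hypothesis $-\Re L_1\ge|\Im L_2|$ is exactly the right one — it is precisely the condition making $\Re L_1\mp\Im L_2\le 0$ on each half of the spectrum — at the cost of reproving a form of Lemma \ref{Garding} from scratch; what the paper's approach buys is brevity by citing that lemma as a black box.
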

\begin{proof}
First, we have
\begin{align*}
    &I_2=\Re(\int_{-\pi}^{\pi}h(\alpha)\overline{L_2(\alpha)\partial_{\alpha}h(\alpha)}d\alpha)\\
    &=\int_{-\pi}^{\pi}\Re L_2(\alpha)(\Re h(\alpha)\Re h'(\alpha)+\Im h(\alpha)\Im h'(\alpha))d\alpha\\
    &-\int_{-\pi}^{\pi}\Im L_2(\alpha)(\Re h(\alpha)\Im h'(\alpha)-\Im h(\alpha)\Re h'(\alpha))d\alpha\\
    &=I_{2,1}+I_{2,s}.
\end{align*}
We could do the integration by parts to $I_{2,1}$ and have
\begin{align*}
    |I_{2,1}|=|\frac{1}{2}\int_{-\pi}^{\pi}\frac{d\Re L_2(\alpha)}{d\alpha}((\Re h(\alpha))^2+(\Im h(\alpha))^2)d\alpha|\lesssim C(\|L_2(\alpha)\|_{C^1(\mathbb{T})})\|h(\alpha)\|_{L^2}^2.
\end{align*}
Moreover, 
\begin{align*}
    &I_1=\Re(\int_{-\pi}^{\pi}h(\alpha)\overline{L_1(\alpha)(\Lambda h)(\alpha)}d\alpha)\\
    &=\int_{-\pi}^{\pi}\Re L_1(\alpha)(\Re h(\alpha)\Lambda \Re h(\alpha)+\Im h(\alpha)\Lambda \Im h(\alpha))d\alpha\\
    &-\int_{-\pi}^{\pi}\Im L_1(\alpha)(\Re h(\alpha)\Lambda \Im h(\alpha)-\Im h(\alpha)\Lambda \Re h(\alpha))d\alpha\\
    &=I_{1,M}+I_{1,2}.
\end{align*}
We can still do the integration by parts to the $I_{1,2}$ and have 
\begin{align*}
    &\int_{-\pi}^{\pi}\Im L_1(\alpha)(\Re h(\alpha)\Lambda \Im h(\alpha)-\Im h(\alpha)\Lambda \Re h(\alpha))d\alpha\\
    &=\int_{-\pi}^{\pi}(\Lambda(\Im L_1\Re h)(\alpha)-\Im L_1(\alpha)\Lambda \Re h(\alpha))\Im h(\alpha)d\alpha.
\end{align*}
Moreover, for any $g_1\in H^2(\mathbb{T})$, $g_2\in H^1(\mathbb{T})$, we have
\begin{align*}
    \|\Lambda(g_1g_2)-g_1\Lambda g_2\|_{L^2(\mathbb{T})}\lesssim\|g_2\|_{L^2(\mathbb{T})}\|g_1\|_{H^2(\mathbb{T})}.
\end{align*}
Hence 
\begin{align*}
&I_{1,2}\lesssim \|\Im L_1(\alpha)\|_{H^2(\mathbb{T})}\|\Im h(\alpha)\|_{L^2(\mathbb{T})}\|\Re h(\alpha)\|_{L^2(\mathbb{T})}\\
&\lesssim  \|\Im L_1(\alpha)\|_{H^2(\mathbb{T})}\| h(\alpha)\|_{L^2(\mathbb{T})}^2.
\end{align*}
Now we are left to control $I_{1,M}+I_{2,s}$. We have
\begin{align*}
    &I_{1,M}+I_{2,s}=
    \int_{-\pi}^{\pi}\Re L_1(\alpha)(\Re h(\alpha)\Lambda \Re h(\alpha)+\Im h(\alpha)\Lambda \Im h(\alpha))d\alpha\\
    &+\int_{-\pi}^{\pi}\Im L_2(\alpha)(\Re h(\alpha)\Im h'(\alpha)-\Im h(\alpha)\Re h'(\alpha))d\alpha.
\end{align*}
Now we use a lemma from \cite[Section 2.4]{Castro-Cordoba-Fefferman-Gancede}.
\begin{lemma}\label{Garding}
Let $a$, $b$ be real valued functions on $\mathbb{T}$, $a(\alpha)\geq|b(\alpha)|$ and satisfying $a,b \in C^2(\mathbb{T})$. Then we have
\[
\Re\int_{\mathbb{T}}\overline{f(x)}(a(x)\Lambda f(x)+b(x)if'(x))dx\geq -C(\|a\|_{C^2(\mathbb{T})}+\|b\|_{C^2(\mathbb{T})})\int_{\mathbb{T}}|f(x)|^2dx.
\]
\end{lemma}
Then from lemma \ref{Garding}, we have
\[
I_{1,M}+I_{2,s}\lesssim C(\|L_1\|_{C^2(\mathbb{T})}+\|L_2\|_{C^2(\mathbb{T})})\|h(\alpha)\|_{L^2}^{2}.
\]
Then we get the result.

\end{proof}
Now let 
\begin{align}\label{Lz1}
&L_{z}^1(\alpha,\gamma,t)=-2\pi\lim_{\beta\to 0}(K(z(\alpha,\gamma,t)-z(\beta,\gamma,t),\tilde{f}(\alpha,t)-\tilde{f}(\beta,t))\tan(\frac{\alpha-\beta}{2}))\\\nonumber
&=-2\pi\frac{\partial_{\alpha}z_1(\alpha,\gamma,t)+\partial_{\alpha}\tilde{f}_1(\alpha,t)}{(\partial_{\alpha}z_1(\alpha,\gamma,t)+\partial_{\alpha}\tilde{f}_1(\alpha,t))^2+(\partial_{\alpha}z_2(\alpha,\gamma,t)+\partial_{\alpha}\tilde{f}_2(\alpha,t))^2},
 \end{align}
and
\begin{align}\label{Lz2}
L_{z}^2(\alpha,\gamma,t)=(\frac{ic(\alpha)\gamma}{1+ic^{'}(\alpha)\gamma t}+\frac{1}{1+ic'(\alpha)\gamma t}p.v.\int_{-\pi}^{\pi} K(z(\alpha,\gamma,t)-z(\beta,\gamma,t),\tilde{f}(\alpha,t)-\tilde{f}(\beta,t))(1+ic'(\beta)\gamma t)d\beta).
\end{align}
Since $\supp c(\alpha)\subset \{\alpha| \lambda(\alpha)=1\}$, from \eqref{energy main term}, we have
\begin{align*}
    &\frac{d}{dt}\|\partial_{\alpha}^5z(\alpha,\gamma,t)\|_{L^2_{\alpha}[-\pi,\pi]}^2\\
    &=\sum_{\mu=1,2}\underbrace{2\Re\int_{-\pi}^{\pi}\partial_{\alpha}^5z_{\mu}(\alpha)\overline{\lambda(\alpha)L_{z}^2(\alpha)\partial_{\alpha}^{6}z_{\mu}(\alpha)}d\alpha}_{\text{main term}}\underbrace{+2\Re\int_{-\pi}^{\pi}\partial_{\alpha}^5z_{\mu}(\alpha)\overline{\lambda(\alpha)L_{z}^1(\alpha) \Lambda(\partial_{\alpha}^5z_{\mu})(\alpha)}d\alpha}_{\text{main term}}\\\nonumber
    &+B.T.
\end{align*}

From lemma \eqref{garding apply}, if $-\Re L_{z}^1(\alpha)\geq |\Im L_z^2(\alpha)|$ for $\alpha\in[-2\delta,2\delta]$,  since $\supp \lambda\subset[-2\delta,2\delta]$, we have
\begin{align*}
     &\frac{d}{dt}\|\partial_{\alpha}^5z(\alpha,\gamma,t)\|_{L^2_{\alpha}[-\pi,\pi]}^2\lesssim B.T.
\end{align*}
Moreover, when $t=0$, $\alpha\in[-2\delta,2\delta]$, from \eqref{Lz1}, \eqref{Lz2} and \eqref{kequationnew} we have
\begin{align*}
&-\Re L_z^1(\alpha,\gamma,0)=2\pi \frac{\partial_{\alpha}f_1^c(\alpha,0)+\partial_{\alpha}\tilde{f}_1(\alpha,0)}{(\partial_{\alpha}f_1^c(\alpha,0)+\partial_{\alpha}\tilde{f}_1(\alpha,0))^2+(\partial_{\alpha}f_2^c(\alpha,0)+\partial_{\alpha}\tilde{f}_2(\alpha,0))^2}\\
&=2\pi \frac{\partial_{\alpha}f_1(\alpha,0)}{(\partial_{\alpha}f_1(\alpha,0))^2+(\partial_{\alpha}f_2(\alpha,0))^2},
\end{align*}
and
\begin{align*}
&\Im L_z^2(\alpha,0)=ic(\alpha)\gamma+\Im (p.v.\int_{-\pi}^{\pi} K(f^c(\alpha,0)-f^c(\beta,0),\tilde{f}(\alpha,0)-\tilde{f}(\beta,0))d\beta)\\
&=ic(\alpha)\gamma.
\end{align*}
From \eqref{RTlocal}, we could choose $\delta_c$ in \eqref{deltac} to be sufficiently small and have
\begin{align*}
    \inf_{\alpha\in[-2\delta,2\delta]}(-\Re L_z^1(\alpha,\gamma,0)-|\Im L_z^2(\alpha,\gamma,0)|)>0.
\end{align*}
Then let 
\begin{align*}
    \|z\|_{RT}(t)=\sup_{\alpha\in[-2\delta,2\delta]}\frac{1}{|\Re L_z^1(\alpha,\gamma,t)+|\Im L_z^2(\alpha,\gamma,t)||}.
\end{align*}
If $\|z\|_{RT}(t)< \infty$, we have 
\begin{align}\label{energyh51}
\frac{d}{dt}\|z\|_{U}^2\lesssim C(\|z\|_{U}+\|z\|_{Arc}).
\end{align}

Therefore, we could let $\|z\|_{\tilde{U}}=\|z\|_{U}+\|z\|_{Arc}+\|z\|_{RT}$.  From \eqref{energyh51}, and the following lemma \ref{RTandArc}, we have 
\begin{align*}
   \frac{d}{dt}\|z\|_{\tilde{U}}^2\lesssim C(\|z\|_{\tilde{U}}).
\end{align*}
Then $\|z(\alpha,\gamma,t)\|_{\tilde{U}}$ is bounded for
sufficiently small time $t_1$. We claim that the bound and the time can be chosen such that it holds for all $\gamma\in[-1,1]$.
\begin{lemma}\label{RTandArc}
We have the following two estimates:
\begin{align*}
    \frac{d}{dt}\|z\|_{Arc}\lesssim C(\|z\|_{\tilde{U}}),
\end{align*}
and
\begin{align*}
    \frac{d}{dt}\|z\|_{RT}\lesssim C(\|z\|_{\tilde{U}}).
\end{align*}
\end{lemma}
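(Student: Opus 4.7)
The plan is to bound the time derivative of each supremum pointwise. Both $\|z\|_{Arc}$ and $\|z\|_{RT}$ are suprema over $\alpha\in[-2\delta,2\delta]$ (and $\beta\in[-\pi,\pi]$) of a quantity that is $C^1$ in $t$; the mean value theorem then gives $\tfrac{d}{dt}\sup \le \sup|\partial_t|$ uniformly on the compact parameter set. The key input shared by both estimates is that $\partial_t z = T(z)$ is $L^\infty$-bounded by $C(\|z\|_{\tilde U})$ from the $L^\infty$ estimate at the start of Section \ref{existencefar}, and that $\partial_t \partial_\alpha z = \partial_\alpha T(z)$ is likewise $L^\infty$-bounded by $C(\|z\|_{\tilde U})$, obtained by running the scheme of Lemmas \ref{01Oi}--\ref{01tildefbound} at order two instead of five and invoking $H^2(\mathbb{T})\hookrightarrow L^\infty(\mathbb{T})$.

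For the arc-chord estimate, let $D(\alpha,\beta,t)$ denote the denominator in $\|z\|_{Arc}$. Then $\partial_t D$ is a sum of terms of the form $\sin(\cdots)$ or $\sinh(\cdots)$ multiplied by a difference $\partial_t z_i(\alpha)-\partial_t z_i(\beta)$ or $\partial_t \tilde f_i(\alpha)-\partial_t \tilde f_i(\beta)$. The $\sin,\sinh$ prefactors are each bounded by $C(\|z\|_{\tilde U})|\alpha-\beta|$ using the Lipschitz regularity inherited from $H^5\hookrightarrow C^4$, and the time-derivative differences are also $O(|\alpha-\beta|)$ by the uniform control on $\partial_\alpha T(z)$. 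Hence $|\partial_t D| \le C(\|z\|_{\tilde U})(\alpha-\beta)^2$, and
\[
\Bigl|\partial_t \tfrac{(\alpha-\beta)^2}{|D|}\Bigr| = \tfrac{(\alpha-\beta)^2}{|D|^2}|\partial_t D| \le C(\|z\|_{\tilde U})\Bigl(\tfrac{(\alpha-\beta)^2}{|D|}\Bigr)^{2} \le C(\|z\|_{\tilde U})\|z\|_{\tilde U}^{2},
\]
which yields the first estimate after absorbing the extra factor into the function $C$.

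For the RT estimate, set $M(\alpha,t) = |\Re L^1_z(\alpha,\gamma,t)+|\Im L^2_z(\alpha,\gamma,t)||$, so $\|z\|_{RT}=\sup 1/M$ and $|\partial_t(1/M)| \le \|z\|_{RT}^{2}\,(|\partial_t L^1_z|+|\partial_t L^2_z|)$. From \eqref{Lz1}, $L^1_z$ is a smooth rational function of $\partial_\alpha(z+\tilde f)$ whose denominator $|\partial_\alpha(z+\tilde f)|^2$ is bounded below by $2/\|z\|_{Arc}$ (by sending $\beta\to\alpha$ in the arc-chord quotient and Taylor-expanding $\cosh-\cos$), so $|\partial_t L^1_z| \le C(\|z\|_{\tilde U})\|\partial_\alpha \partial_t z\|_\infty \le C(\|z\|_{\tilde U})$. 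For $L^2_z$ in \eqref{Lz2}, differentiating under the principal-value integral produces a kernel of the same $K_{-1}$ type multiplied by $\partial_t z(\alpha)-\partial_t z(\beta)$ and analogous $\tilde f$-differences, together with the explicit factor $ic'(\beta)\gamma$ coming from $\partial_t(1+ic'(\beta)\gamma t)$; since $\partial_t z$ and $\partial_t \tilde f$ are Lipschitz in $\alpha$, the $K_{-1}$ singularity gets absorbed into an integrable $K_0$-type kernel, exactly as in Lemma \ref{01tildefbound}, giving $|\partial_t L^2_z| \le C(\|z\|_{\tilde U})$. Combining, $|\partial_t(1/M)| \le C(\|z\|_{\tilde U})$ uniformly in $\alpha$, which is the second estimate.

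The main obstacle is justifying the term-by-term differentiation of the p.v.\ integral in $L^2_z$: one must verify that the resulting kernel is still integrable, which relies precisely on the cancellation $\partial_t z(\alpha)-\partial_t z(\beta)=O(|\alpha-\beta|)$ converting the $K_{-1}$ singularity into a $K_0$ one. Once this structural point is checked, the rest of the argument is bookkeeping with the $K_{-\sigma}$ kernel calculus of this section together with the $L^\infty$ bounds on $T(z)$ and $\partial_\alpha T(z)$.
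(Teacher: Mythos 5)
Your proposal follows essentially the same route as the paper: differentiate the supremand in $t$, bound $|\partial_t(1/M)|$ by $M^{-2}(|\partial_t L^1_z|+|\partial_t L^2_z|)$ and $|\partial_t((\alpha-\beta)^2/|D|)|$ by $\|z\|_{Arc}^2|\partial_t D|$, and control these using $C^k$ bounds on $T(z)$ together with the $K_{-\sigma}$ kernel calculus (lemmas \ref{goodterm0}--\ref{goodterm1}) after exploiting the cancellation $\partial_t z(\alpha)-\partial_t z(\beta)=O(\alpha-\beta)$. The only minor bookkeeping discrepancy is that for the $Term_{2,1}$-type bound on $\partial_t L^2_z$ the paper invokes $\|T(z)\|_{C^2}$ (not just $\|\partial_\alpha T(z)\|_\infty$), since lemma \ref{goodterm0} needs the kernel times $(\alpha-\beta)$ in $C^1$; this is harmless because $z\in H^5$ gives $T(z)\in H^3\hookrightarrow C^2$, but it is worth stating correctly.
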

\begin{proof}
For the $\|z\|_{RT}$, we have
\begin{align*}
    &\frac{d}{dt}\sup_{\alpha\in[-2\delta,2\delta]}\frac{1}{|\Re L_z^1(\alpha)+|\Im L_z^2(\alpha)||}\leq \sup_{\alpha\in[-2\delta,2\delta]}\frac{1}{|\Re L_z^1(\alpha)+|\Im L_z^2(\alpha)||^2}(\|\frac{d}{dt}L_z^1(\alpha)\|_{L^{\infty}_{\alpha}[-2\delta,2\delta]}+\|\frac{d}{dt} L_z^2(\alpha)\|_{L^{\infty}_{\alpha}[-2\delta,2\delta]}).
\end{align*}
From \eqref{Lz1}, we have 
\begin{align*}
    &\|\frac{d}{dt}L_z^1(\alpha)\|_{L^{\infty}_{\alpha}[-2\delta,2\delta]}\lesssim C(\|z\|_{\tilde{U}})(\|\frac{d}{dt}z(\alpha,\gamma,t)\|_{C^1(\mathbb{T})}+\|\frac{d}{dt}\tilde{f}(\alpha,t)\|_{C^1(\mathbb{T})})\\
    &\lesssim C(\|z\|_{\tilde{U}})(\|T(z)\|_{C^1(\mathbb{T})}+C).
\end{align*}
From \eqref{zequationsimplify}, it is easy to get
\begin{align*}
    \|T(z)\|_{C^1(\mathbb{T})}\lesssim C(\|z\|_{\tilde{U}}).
\end{align*}
Then \begin{align*}
    &\|\frac{d}{dt}L_z^1(\alpha)\|_{L^{\infty}_{\alpha}[-2\delta,2\delta]}\lesssim C(\|z\|_{\tilde{U}}).
\end{align*}
From \eqref{Lz2}, we have
\begin{align*}
    &\|\frac{d}{dt}L_z^2(\alpha,\gamma,t)\|_{L^{\infty}_{\alpha}[-2\delta,2\delta]}\lesssim C+ C\|\frac{d}{dt}p.v.\int_{-\pi}^{\pi} K(z(\alpha)-z(\beta),\tilde{f}(\alpha)-\tilde{f}(\beta))(1+ic'(\beta)\gamma t)d\beta\|_{L^{\infty}_{\alpha}[-2\delta,2\delta]}\\
    &+\|p.v.\int_{-\pi}^{\pi} K(z(\alpha)-z(\beta),\tilde{f}(\alpha)-\tilde{f}(\beta))(1+ic'(\beta)\gamma t)d\beta\|_{L^{\infty}_{\alpha}[-2\delta,2\delta]}\\
    &\leq \|p.v.\int_{-\pi}^{\pi} \nabla_1 K(z(\alpha)-z(\beta),\tilde{f}(\alpha)-\tilde{f}(\beta))\cdot(\frac{dz}{dt}(\alpha)-\frac{dz}{dt}(\beta)+\frac{d\tilde{f}}{dt}(\alpha)-\frac{d\tilde{f}}{dt}(\beta))(1+ic'(\beta)\gamma t)d\beta\|_{L^{\infty}_{\alpha}[-2\delta,2\delta]}\\
    &+\|p.v.\int_{-\pi}^{\pi} K(z(\alpha)-z(\beta),\tilde{f}(\alpha)-\tilde{f}(\beta))(ic'(\beta)\gamma )d\beta\|_{L^{\infty}_{\alpha}[-2\delta,2\delta]}\\
    &+\|p.v.\int_{-\pi}^{\pi} K(z(\alpha)-z(\beta),\tilde{f}(\alpha)-\tilde{f}(\beta))(1+ic'(\beta)\gamma t)d\beta\|_{L^{\infty}_{\alpha}[-2\delta,2\delta]}+C\\
    &=Term_{2,1}+Term_{2,2}+Term_{2,3}+C.
\end{align*}
From condition \eqref{kcondition0} and lemma \ref{goodterm0}, we have
\[
Term_{2,2}+Term_{2,3}\lesssim C(\|z\|_{\tilde{U}}).
\]
Moreover, $\nabla_1 K(z(\alpha)-z(\beta),\tilde{f}(\alpha)-\tilde{f}(\beta))$ is of $-2$ type, and
\begin{align*}
    &\|\nabla_1 K(z(\alpha)-z(\beta),\tilde{f}(\alpha)-\tilde{f}(\beta))\cdot(\frac{dz}{dt}(\alpha)-\frac{dz}{dt}(\beta)+\frac{d\tilde{f}}{dt}(\alpha)-\frac{d\tilde{f}}{dt}(\beta))(\alpha-\beta)\|_{C^1([-2\delta,2\delta]\times[-\pi,\pi])}\\
    &\lesssim(\|T(z)\|_{C^2(\mathbb{T})}+C)C(\|z\|_{\tilde{U}}).
\end{align*}

From \eqref{zequationsimplify}, it is easy to get
\begin{align*}
    \|T(z)\|_{C^2(\mathbb{T})}\lesssim C(\|z\|_{\tilde{U}}).
\end{align*}
Then $Term_{2,1}\lesssim C(\|z\|_{\tilde{U}})$. Hence
\begin{align*}
    &\|\frac{d}{dt}L_z^2(\alpha)\|_{L^{\infty}_{\alpha}[-2\delta,2\delta]}\lesssim C(\|z\|_{\tilde{U}}).
\end{align*}
Then we have the estimate 
\begin{align*}
    &\frac{d}{dt}\|z\|_{RT}=\frac{d}{dt}\sup_{\alpha\in[-2\delta,2\delta]}\frac{1}{|\Re L_z^1(\alpha)+|\Im L_z^2(\alpha)||}\lesssim C(\|z\|_{\tilde{U}}).
\end{align*}
Moreover, we have
\begin{align*}
    &\frac{d}{dt}\|z\|_{Arc}=\frac{d}{dt}\sup_{\alpha\in[-2\delta,2\delta],\beta\in[-\pi,\pi]}\big|\frac{1}{(\frac{\cosh(z_{2}(\alpha)-z_{2}(\beta)+\tilde{f_1}(\alpha)-\tilde{f_1}(\beta))-\cos(z_{1}(\alpha,\gamma,t)-z_{1}(\beta,\gamma,t)+\tilde{f_1}(\alpha)-\tilde{f_1}(\beta))}{(\alpha-\beta)^2})}\big|\\
    &\leq \sup_{\alpha\in[-2\delta,2\delta],\beta\in[-\pi,\pi]}\big|\frac{1}{(\frac{\cosh(z_{2}(\alpha)-z_{2}(\beta)+\tilde{f_1}(\alpha)-\tilde{f_1}(\beta))-\cos(z_{1}(\alpha,\gamma,t)-z_{1}(\beta,\gamma,t)+\tilde{f_1}(\alpha)-\tilde{f_1}(\beta))}{(\alpha-\beta)^2})}\big|^2\\
    &\cdot\|\frac{d}{dt}(\frac{\cosh(z_{2}(\alpha)-z_{2}(\beta)+\tilde{f_1}(\alpha)-\tilde{f_1}(\beta))-\cos(z_{1}(\alpha,\gamma,t)-z_{1}(\beta,\gamma,t)+\tilde{f_1}(\alpha)-\tilde{f_1}(\beta))}{(\alpha-\beta)^2})\|_{L^{\infty}_{\alpha,\beta}[-\pi,\pi]\times[-\pi,\pi]}\\
    &\leq \|z\|_{Arc}^2\\
    &(\|\frac{\sinh(z_{2}(\alpha)-z_{2}(\beta)+\tilde{f_2}(\alpha)-\tilde{f_2}(\beta))}{\alpha-\beta}\frac{\frac{d}{dt}(z_2(\alpha)-z_2(\beta))+\frac{d}{dt}(\tilde{f}_2(\alpha)-\tilde{f}_2(\beta))}{(\alpha-\beta)}\|_{L^{\infty}_{\alpha,\beta}[-\pi,\pi]\times[-\pi,\pi]}\\
    &+\|\frac{\sin(z_{1}(\alpha)-z_{1}(\beta)+\tilde{f_1}(\alpha)-\tilde{f_1}(\beta))}{\alpha-\beta}\frac{\frac{d}{dt}(z_1(\alpha)-z_1(\beta))+\frac{d}{dt}(\tilde{f}_1(\alpha)-\tilde{f}_2(\beta))}{(\alpha-\beta)}\|_{L^{\infty}_{\alpha,\beta}[-\pi,\pi]\times[-\pi,\pi]})\\
    &\lesssim  \|z\|_{Arc}^2\|z\|_{U}(\|T(z)\|_{C^{1}_{\alpha}[-\pi,\pi]}+C)\\
    &\lesssim C(\|z\|_{\tilde{U}}).
\end{align*}
We also introduce a corollary here to be used in the later section.
\begin{corollary}\label{maintermcorollary}
For $g(\alpha)\in H^1(\mathbb{T})$, if $z\in H^5(\mathbb{T})$, $\|z\|_{Arc}< \infty$ and $-\Re L_z^1(\alpha)-|\Im L_z^2(\alpha)|>0$ when $\alpha\in [-2\delta,2\delta]$, $\gamma \in[-1,1]$, then we have
\begin{align*}
    &\sup_{\gamma \in[-1,1]}(\int_{-\pi}^{\pi}g(\alpha)\overline{\lambda(\alpha)\int_{-\pi}^{\pi} K(z(\alpha)-z(\beta),\tilde{f}(\alpha)-\tilde{f}(\beta))(\frac{\partial_{\alpha}g(\alpha)}{1+ic'(\alpha)\gamma t}-\frac{\partial_{\alpha}g(\beta)}{1+ic'(\beta)\gamma t})d\beta}d\alpha\\
    &+\int_{-\pi}^{\pi}g(\alpha)\overline{\partial_{\alpha}g(\alpha)\frac{ic(\alpha)\gamma}{1+ic'(\alpha)\gamma t}}d\alpha)\lesssim \|g\|_{L^2}^2.
\end{align*}
\end{corollary}
\end{proof}
\subsection{Approximation for the Picard theorem}

 Now we approximate the problem and have the following equations,

\begin{align*}
&\frac{d z^n(\alpha, \gamma,t)}{dt}=\varphi_n*T^n(\varphi_n*z^n)=\varphi_n*\bigg(\frac{ic(\alpha)\gamma}{1+ic^{'}(\alpha)\gamma t}\partial_{\alpha}(\varphi_n*z^n)(\alpha)\bigg)+\\
&\varphi_n*\bigg{(}\lambda(\alpha)\int_{-\pi}^{\pi} K^n((\varphi_n*z^n)(\alpha)-(\varphi_n*z^n)(\beta),\tilde{f}(\alpha)-\tilde{f}(\beta))(\frac{\partial_{\alpha}(\varphi_n*z_{\mu}^n)(\alpha)}{1+ic'(\alpha)\gamma t}-\frac{\partial_{\beta}(\varphi_n*z_{\mu}^n)(\beta)}{1+ic'(\beta)\gamma t})(1+ic'(\beta)\gamma t)d\beta\bigg{)}\\
&+\varphi_n*\bigg{(}\lambda(\alpha)\int_{-\pi}^{\pi} K^n((\varphi_n*z^n)(\alpha)-(\varphi_n*z^n)(\beta),\tilde{f}(\alpha)-\tilde{f}(\beta))(\tilde{f}_{\mu}(\alpha)-\tilde{f}_{\mu}(\beta))(1+ic'(\beta)\gamma t)d\beta\bigg{)},
\end{align*}
where
\begin{align*}
&K^{n}((\varphi_n*z^n)(\alpha)-(\varphi_n*z^n)(\beta),\tilde{f}(\alpha)-\tilde{f}(\beta))\\
&=\sin(\Delta((\varphi_n*z_{1}^n)(\alpha)+\tilde{f_1}(\alpha)))\\
&\cdot\frac{1}{|\cosh(\Delta((\varphi_n*z_{2}^n)(\alpha)+\tilde{f_2}(\alpha)))-\cos(\Delta((\varphi_n*z_{1}^n)(\alpha)+\tilde{f_1}(\alpha)))|^2+\frac{1}{n}\sin(\frac{\alpha-\beta}{2})^2}\\
&\cdot\overline{\cosh(\Delta((\varphi_n*z_{2}^n)(\alpha)+\tilde{f_2}(\alpha)))-\cos(\Delta((\varphi_n*z_{1}^n)(\alpha)+\tilde{f_1}(\alpha)))},
\end{align*}
with
\begin{equation}\label{Delta01}
    \Delta((\varphi_n*z_{2}^n)(\alpha)+\tilde{f_2}(\alpha))=(\varphi_n*z_{2}^n)(\alpha)-(\varphi_n*z_{2}^n)(\beta)+\tilde{f_2}(\alpha)-\tilde{f_2}(\beta),
\end{equation}
\begin{equation}\label{Delta02}
    \Delta((\varphi_n*z_{1}^n)(\alpha)+\tilde{f_1}(\alpha))=(\varphi_n*z_{1}^n)(\alpha)-(\varphi_n*z_{1}^n)(\beta)+\tilde{f_1}(\alpha)-\tilde{f_1}(\beta),
\end{equation}
and initial value $z^n(\alpha, \gamma, 0)=\varphi_{n}*f(\alpha,0)$. Here the convolution of $\varphi_n$ is the projection to the finite Fourier modes of $\alpha$.

By the Picard theorem, for any fixed $\gamma \in [-1,1]$, there exists solutions in $C^1([0,t_{n}], H^5_{\alpha}(\mathbb{T}))$.
Moreover, by the structure of our approximation, we have $z^n=\varphi_{n}*z^n$, and for $1\leq j\leq 5$,
\begin{align*}
    &\frac{d}{dt}\int_{-\pi}^{\pi}|\partial_{\alpha}^jz^n(\alpha,\gamma,t)|^2d\alpha\\
    &=2\Re\int_{-\pi}^{\pi}\partial_{\alpha}^jz^n(\alpha,\gamma,t)\overline{\partial_{\alpha}^j(\varphi_n*T^n(\varphi_n*{z^n}))}d\alpha\\
    &=2\Re\int_{-\pi}^{\pi}\partial_{\alpha}^j(\varphi_n*{z^n})(\alpha,\gamma,t)\overline{\partial_{\alpha}^j(T^n(\varphi_n*{z^n}))}d\alpha.
\end{align*}
Then we can do the similar energy estimate as the previous section by letting
\[
\|z^n\|_{\tilde{U}^n}=\|z^n\|_{H^5[-\pi,\pi]}+\|z^n\|_{RT^n}+\|z^n\|_{Arc^n},
\]
where 
\begin{align*}
&\|z^n\|_{Arc^n}=\sup_{\alpha\in[-2\delta,2\delta],\beta\in[-\pi,\pi]}\frac{|\cosh(\Delta((\varphi_n*z_{2}^n)(\alpha)+\tilde{f_2}(\alpha)))-\cos(\Delta((\varphi_n*z_{1}^n)(\alpha)+\tilde{f_1}(\alpha)))|(\alpha-\beta)^2}{|\cosh(\Delta((\varphi_n*z_{2}^n)(\alpha)+\tilde{f_2}(\alpha)))-\cos(\Delta((\varphi_n*z_{1}^n)(\alpha)+\tilde{f_1}(\alpha)))|^2+\frac{1}{n}\sin(\frac{\alpha-\beta}{2})^2},
\end{align*}
with  $\Delta((\varphi_n*z_{2}^n)(\alpha)+\tilde{f_2}(\alpha))$ and $\Delta((\varphi_n*z_{1}^n)(\alpha)+\tilde{f_1}(\alpha))$ from \eqref{Delta01}, \eqref{Delta02},
and letting
\begin{align*}
\|z^n\|_{RT^n}=\sup_{\alpha\in[-2\delta,2\delta]}\frac{1}{|\Re L_{z}^{1,n}(\alpha)+|\Im L_z^{2,n}(\alpha)||},
\end{align*}
with
\[
L_{z^n}^{1,n}(\alpha,\gamma,t)=-2\pi\lim_{\beta\to 0}(K^{n}((\varphi_n*z^n)(\alpha)-(\varphi_n*z^n)(\beta),\tilde{f}(\alpha)-\tilde{f}(\beta))\tan(\frac{\alpha-\beta}{2})),
\]
\[
L_{z^n}^{2,n}(\alpha,\gamma,t)=(\frac{ic(\alpha)\gamma}{1+ic^{'}(\alpha)\gamma t}+\frac{1}{1+ic'(\alpha)\gamma t}p.v.\int_{-\pi}^{\pi} K^n((\varphi_n*z^n)(\alpha)-(\varphi_n*z^n)(\beta),\tilde{f}(\alpha)-\tilde{f}(\beta))(1+ic'(\beta)\gamma t)d\beta).
\]

Then we can use the similar energy estimate and the compactness argument to show there exist a solution
\begin{equation}\label{zspace}
z(\alpha,\gamma,t)\in L^\infty([0,t_1],H^5_{\alpha}(\mathbb{T})),
\end{equation}
satisfying 
\begin{align}\label{integral equation}
z(\alpha,\gamma,t)=\int_{0}^{t}T(z)d\tau+f^c(\alpha,0),
\end{align}
for sufficiently small time $t_1$.
Moreover, 
\begin{align}\label{arcconditions}
\|z\|_{Arc}<\infty,
\end{align}
and
\begin{align}\label{RTconditions}
-\Re L_z^1(\alpha)-|\Im L_z^2(\alpha)|>0, \text{ when } \alpha\in[-2\delta,2\delta].
\end{align}
 Since the energy estimate has a bound for all $\gamma \in[-1,1]$, we have a existence time $t_1$ that holds for all $\gamma$.

Now we abuse the notation and write $T(z)$ as $T(z(\alpha,\gamma,t),\gamma,t)$. We have the following lemma:
\begin{lemma}\label{Tproperty}
    For any $g(\alpha),h(\alpha)\in H^{j+1}(\mathbb{T})$, $j=3,4,$ $\|g\|_{Arc}< \infty$ and $\|h\|_{Arc}<\infty$, we have
    \begin{align}\label{upperbound01}
    \|T(g(\alpha),\gamma,t)\|_{H^j(\mathbb{T})}\lesssim 1,
    \end{align}
     \begin{align}
    \|T(g(\alpha),\gamma,t)-T(h(\alpha),\gamma,t)\|_{H^j(\mathbb{T})}\lesssim \|g(\alpha)-g(\alpha)\|_{H^{j+1}(\mathbb{T})},
    \end{align}
     \begin{align}
    \|T(g(\alpha),\gamma,t)-T(g(\alpha),\gamma,t')\|_{H^j(\mathbb{T})}\lesssim |t-t'|.
    \end{align}
\end{lemma}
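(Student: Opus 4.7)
The plan is to treat all three estimates by the same overall strategy: decompose $T(z)$ as in \eqref{zequationsimplify} into the transport piece $\frac{ic(\alpha)\gamma}{1+ic'(\alpha)\gamma t}\partial_\alpha z_\mu$ and the two singular integrals (one carrying $\partial z$, one carrying $\partial \tilde f$), then differentiate $j$ times in $\alpha$ and classify the resulting terms by the $K_{-\sigma}$-type framework already used for the $O^i$ in the energy estimate. Since $c,c'\in C^{100}$ with $\|c\|_{C^{100}}\le\delta$ and $t\in[-t_0,t_0]$, $\gamma\in[-1,1]$, all rational factors of the form $(1+ic'(\alpha)\gamma t)^{-1}$ and all their $\alpha$-derivatives are uniformly bounded, so they will play no essential role in the bounds.

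For the boundedness \eqref{upperbound01}, write $\partial_\alpha^j T(g)$ via Leibniz and Faà di Bruno. The top-order contributions are exactly of the form appearing in $T_1,T_2$ from \eqref{equationd5}, multiplied by bounded coefficients; their $L^2$ norms are bounded by $\|g\|_{H^{j+1}}$ times a constant depending on $\|g\|_{Arc}$ and $\|g\|_{H^{j+1}}$ via Lemma \ref{goodterm3} (for the free $\partial_\alpha^{j+1}g(\alpha)$ term) and via the cotangent splitting plus Lemma \ref{goodterm1} (for the $\partial_\beta^{j+1}g(\beta)$ term, after the same maneuver that produced $T_{2,2,1}$ and $T_{2,2,2}$). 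The lower-order terms are precisely of $O^{1,i}, O^{2,i}, O^{3,i}$ shape and are handled by Lemma \ref{01Oi}. The $\tilde f$-integral is handled by Lemma \ref{01tildefbound}. The resulting constant depends only on $\|g\|_{H^{j+1}},\|g\|_{Arc}$ and on the fixed data $\tilde f,\lambda,c$, and so is $\lesssim 1$ under the hypotheses.

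For the Lipschitz estimate in the first argument, subtract $T(g)-T(h)$ and telescope: in each of the integrands, write differences of $K$-kernels via $K(X)-K(Y)=\int_0^1\nabla_1 K(Y+s(X-Y))(X-Y)\,ds$ (as already used for $\text{Term}_{2,1}$ in the proof of Lemma \ref{RTandArc}), and write differences of the linear factors $\partial_\alpha g-\partial_\alpha h$, etc.\ directly. Because $\nabla_1 K$ is of $-2$ type, multiplication by a difference $g-h$ or $\partial^l(g-h)$ (up to $l\le j+1$) again yields an integrand of $-1$ type times a factor carrying $g-h$; the same Lemmas \ref{goodterm0}--\ref{goodterm3} then produce a bound by $\|g-h\|_{H^{j+1}}$, with constant depending on the $U$- and $Arc$-norms of $g$ and $h$. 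The same reasoning applied to $\partial_\alpha^j(T(g)-T(h))$ gives the $H^j$ bound.

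For the $t$-Lipschitz estimate, note that $t$ enters $T(g,\gamma,t)$ only through the factors $(1+ic'(\alpha)\gamma t)^{-1}$, $(1+ic'(\beta)\gamma t)$, $\frac{ic(\alpha)\gamma}{1+ic'(\alpha)\gamma t}$, and through $\tilde f(\alpha,t)$. The rational factors are $C^\infty$ in $t$ with all $\alpha$-$t$ derivatives bounded on $[-t_0,t_0]$, and $\tilde f\in C^1([-t_0,t_0],H^6(\mathbb{T}))$ by the localization setup in Section \ref{localizefar}. Compute $\partial_t T(g,\gamma,t)$ and apply the same $K_{-\sigma}$-classification as in the boundedness step to obtain $\|\partial_t T(g,\gamma,t)\|_{H^j}\lesssim 1$; integrating in $t$ yields the claim. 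The main obstacle is purely bookkeeping: in the $j=4$ case one must check that taking four $\alpha$-derivatives of the $K$-kernels, combined with the differentiation of the $\tilde V_z$ and $X_i$ factors, still leaves at most one factor of $\partial_\alpha^{j+1}g$ so that the $K_{-1}$-times-difference structure is preserved.
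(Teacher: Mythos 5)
Your overall framework matches the paper's: decompose $T$ into the transport term, the singular integral carrying $\partial g$, and the one carrying $\partial\tilde f$; differentiate $j$ times; classify the resulting integrands by $K_{-\sigma}$-type; and close with the auxiliary lemmas from the Appendix. The one genuine divergence is in the top-order term of $\partial_\alpha^j T_2$, which has the form
\begin{align*}
\lambda(\alpha)\int_{-\pi}^{\pi} K(g(\alpha)-g(\beta),\tilde f(\alpha)-\tilde f(\beta))\Big(\tfrac{\partial_\alpha^{j+1}g(\alpha)}{1+ic'(\alpha)\gamma t}-\tfrac{\partial_\beta^{j+1}g(\beta)}{1+ic'(\beta)\gamma t}\Big)(1+ic'(\beta)\gamma t)\,d\beta.
\end{align*}
Because $g\in H^{j+1}$ the difference quotient $\tilde g(\alpha)=\partial_\alpha^{j+1}g(\alpha)/(1+ic'(\alpha)\gamma t)$ is already in $L^2$, and by \eqref{kgcondition} the product $K(\cdot)(\alpha-\beta)(1+ic'(\beta)\gamma t)$ lies in $C^{j-2}\subset C^1$ for $j\ge3$; so the paper bounds this term in one application of Lemma \ref{goodterm1}, with no further splitting. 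You instead separate the $\partial_\alpha^{j+1}g(\alpha)$ and $\partial_\beta^{j+1}g(\beta)$ pieces and import the cotangent-splitting maneuver from the $H^5$ energy estimate, producing a $\Lambda(\partial_\alpha^jg)$ contribution plus an integrated-by-parts remainder. That route is correct --- $\Lambda(\partial_\alpha^jg)\in L^2$ precisely because $g\in H^{j+1}$, and the remainder is controlled after one integration by parts --- but it is needless machinery here: the loss of exactly one derivative ($H^{j+1}\to H^j$) is what makes the direct Lemma \ref{goodterm1} argument go through and what the cotangent trick is designed to compensate for when it is absent (as in the $H^5\to H^5$ energy estimate). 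Also a small slip: the free $\partial_\alpha^{j+1}g(\alpha)$ piece in your split is an $L^2$ function times the bounded coefficient $\int K\,d\beta$; the relevant tool is Lemma \ref{goodterm0}, not Lemma \ref{goodterm3}. Your handling of the lower-order terms by $K_{-1}$-type classification, and of the two Lipschitz estimates via the fundamental theorem of calculus for $\nabla_1K$ and via differentiating in $t$, is the same in spirit as the paper, which writes out only \eqref{upperbound01} and states that the other two follow identically.
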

\begin{proof}
We only show \eqref{upperbound01} and the left can be shown in the same way.
From \eqref{zequationsimplify}, we have
\begin{align*}
    &T(g(\alpha),\gamma,t)=\frac{ic(\alpha)\gamma}{1+ic^{'}(\alpha)\gamma t}\partial_{\alpha}g(\alpha)+\\\nonumber
&\lambda(\alpha)\int_{-\pi}^{\pi} K(g(\alpha)-g(\beta),\tilde{f}(\alpha)-\tilde{f}(\beta))(\frac{\partial_{\alpha}g(\alpha)}{1+ic'(\alpha)\gamma t}-\frac{\partial_{\beta}g(\beta)}{1+ic'(\beta)\gamma t})(1+ic'(\beta)\gamma t)d\beta\\\nonumber
&+\lambda(\alpha)\int_{-\pi}^{\pi} K(g(\alpha)-g(\beta),\tilde{f}(\alpha)-\tilde{f}(\beta))(\partial_{\alpha}\tilde{f}(\alpha)-\partial_{\beta}\tilde{f}(\beta))(1+ic'(\beta)\gamma t)d\beta\\
&=T_{1}(g(\alpha),\gamma,t)+T_{2}(g(\alpha),\gamma,t)+T_{3}(g(\alpha),\gamma,t).
\end{align*}
It is trivial that $T_1$ satisfying the \eqref{upperbound01} since $c(\alpha)$ is sufficiently smooth. 

Moreover, the $\partial_{\alpha}\tilde{f}\in H^5(\mathbb{T})$ and is more regular than $\frac{\partial_{\alpha}g(\alpha)}{1+ic'(\alpha)\gamma t}.$ Hence we only consider $T_2$.
For $T_2$, we have
\begin{align}\label{kgcondition}
\|K(g(\alpha)-g(\beta),\tilde{f}(\alpha)-\tilde{f}(\beta))(\alpha-\beta)\|_{C^{j-2}([-2\delta,2\delta]\times\mathbb{T})}\lesssim 1.
\end{align}
Then
\begin{align*}
    \|T_2\|_{L^2(\mathbb{T})}\lesssim  \|T_2\|_{L^\infty(\mathbb{T})}\lesssim 1.
\end{align*}
Moreover, we can use the notation from \eqref{k-sigma01}, \eqref{notationg}, and get
\begin{align*}
    &\partial_{\alpha}^{j}T_2(g)=\\
    &\lambda(\alpha)\int_{-\pi}^{\pi} K(g(\alpha)-g(\beta),\tilde{f}(\alpha)-\tilde{f}(\beta))(\frac{\partial_{\alpha}^{j+1}g(\alpha)}{1+ic'(\alpha)\gamma t}-\frac{\partial_{\beta}^{j+1}g(\beta)}{1+ic'(\beta)\gamma t})(1+ic'(\beta)\gamma t)d\beta\\
    &+\sum_{j'}\partial_{\alpha}^{b_{j'}}\lambda(\alpha)\int_{-\pi}^{\pi}K_{-1}^{j'}(V_{g}^{[\frac{j+1}{2}]}(\alpha)-V_{g}^{[\frac{j+1}{2}]}(\beta),V_{f}^{[\frac{j+1}{2}]}(\alpha)-V_{f}^{[\frac{j+1}{2}]}(\beta),\tilde{V}_{g}^{[\frac{j+1}{2}]}(\alpha)-\tilde{V}_{g}^{[\frac{j+1}{2}]}(\beta))\\
    &\cdot X_i(\beta)(\tilde{z}^{j}(\alpha)-\tilde{z}^{j}(\beta))d\beta\\
    &=Term_{2,1}+Term_{2,2}.
\end{align*}
Here  $\tilde{z}^{j}\in V_g^{j} \cup  \tilde{V}_g^{j}\cup V_{\tilde{f}}^j$. $[\frac{j+1}{2}]$ is the biggest integer less than $\frac{j+1}{2}$.
Then from  \eqref{kgcondition}, we could use lemma \ref{goodterm1} to bound $Term_{2,1}$. 
Moreover, since $j+1-[\frac{j+1}{2}]\geq[\frac{j+1}{2}]\geq2 $. We have
\begin{align*}
    &\|K_{-1}^{j'}(V_{g}^{[\frac{j+1}{2}]}(\alpha)-V_{g}^{[\frac{j+1}{2}]}(\beta),V_{f}^{[\frac{j+1}{2}]}(\alpha)-V_{f}^{[\frac{j+1}{2}]}(\beta),\tilde{V}_{g}^{[\frac{j+1}{2}]}(\alpha)-\tilde{V}_{g}^{[\frac{j+1}{2}]}(\beta))(\alpha-\beta)\|_{C^0([-2\delta,2\delta]\times[-\pi,\pi])}\\
    &\lesssim C(\|g\|_{H^{j+1}}\|\tilde{f}\|_{H^{j+1}}).
\end{align*}
Then we could use lemma \ref{goodterm3} to bound $Term_{2,2}$.
\end{proof}
Then from lemma \ref{Tproperty}, \eqref{zspace} and \eqref{integral equation}, we have 
\begin{equation}\label{zspace2}
z(\alpha,\gamma,t)\in L^\infty([0,t_1],H^5_{\alpha}(\mathbb{T}))\cap C^0([0,t_1],H^4_{\alpha}(\mathbb{T}))\cap C^1([0,t_1],H^3_{\alpha}(\mathbb{T})).
\end{equation}
\section{The uniqueness}\label{uniquenessfar}
In this section we show there exists sufficiently $0<t_2\leq t_1$ such that for $0\leq t\leq t_2$, we have $z(\alpha,0,t)=f^c(\alpha,t)$.

Let $z^0(\alpha,t)=z(\alpha,0,t)$. From \eqref{zequationsimplify} and \eqref{kequationnew}, we have
\begin{align*}
    &\frac{dz_{\mu}^0(\alpha,t)}{dt}=\lambda(\alpha)\int_{-\pi}^{\pi}K(z^0(\alpha)-z^{0}(\beta),\tilde{f}(\alpha)-\tilde{f}(\beta))(\partial_{\alpha}z^0_{\mu}(\alpha)-\partial_{\beta}z^0_{\mu}(\beta))d\beta\\
    &+\lambda(\alpha)\int_{-\pi}^{\pi}K(z^0(\alpha)-z^{0}(\beta),\tilde{f}(\alpha)-\tilde{f}(\beta))(\partial_{\alpha}\tilde{f}_{\mu}(\alpha)-\partial_{\beta}\tilde{f}_{\mu}(\beta))d\beta.
\end{align*}
Moreover, from \eqref{fequation}, we have
\begin{align*}
&\frac{df^c_{\mu}(\alpha,t)}{dt}=\lambda(\alpha)\int_{-\pi}^{\pi}K(f^c(\alpha)-f^{c}(\beta),\tilde{f}(\alpha)-\tilde{f}(\beta))(\partial_{\alpha}f^c_{\mu}(\alpha)-\partial_{\beta}f^c_{\mu}(\beta))d\beta\\
    &+\lambda(\alpha)\int_{-\pi}^{\pi}K(f^c(\alpha)-f^{c}(\beta),\tilde{f}(\alpha)-\tilde{f}(\beta))(\partial_{\alpha}\tilde{f}_{\mu}(\alpha)-\partial_{\beta}\tilde{f}_{\mu}(\beta))d\beta.
\end{align*}
Then we have the equation for the difference:
\begin{align*}
    &\frac{d(z_{\mu}^0(\alpha,t)-f^c(\alpha,t))}{dt}=\lambda(\alpha)\int_{-\pi}^{\pi}K(z^0(\alpha)-z^{0}(\beta),\tilde{f}(\alpha)-\tilde{f}(\beta))(\partial_{\alpha}(z_{\mu}^0-f^c_{\mu})(\alpha)-\partial_{\beta}(z_{\mu}^0-f^c_{\mu})(\beta))d\beta\\
    &+\lambda(\alpha)\int_{-\pi}^{\pi}(K(z^0(\alpha)-z^{0}(\beta),\tilde{f}(\alpha)-\tilde{f}(\beta))-K(f^c(\alpha)-f^{c}(\beta),\tilde{f}(\alpha)-\tilde{f}(\beta)))\\
    &\cdot(\partial_{\alpha}\tilde{f}_{\mu}(\alpha)+\partial_{\alpha}f^c_{\mu}(\alpha)-\partial_{\beta}\tilde{f}_{\mu}(\beta)-\partial_{\beta}f^c_{\mu}(\beta))d\beta\\
    &=Term_{1}+Term_{2}.
\end{align*}
We first control $Term_2$, we have
\begin{align}\label{Term2bound}
    &Term_2=\\\nonumber
    &\lambda(\alpha)\int_{-\pi}^{\pi}\int_{0}^1\frac{d}{d\tau}(K(f^c(\alpha)-f^{c}(\beta)-\tau(f^c(\alpha)-z^0(\alpha)-(f^c(\beta)-z^0(\beta))),\tilde{f}(\alpha)-\tilde{f}(\beta)))\\\nonumber
    &\cdot(\partial_{\alpha}\tilde{f}_{\mu}(\alpha)+\partial_{\alpha}f^c_{\mu}(\alpha)-\partial_{\beta}\tilde{f}_{\mu}(\beta)-\partial_{\beta}f^c_{\mu}(\beta))d\tau d\beta\\\nonumber
    &=-\lambda(\alpha)\int_{-\pi}^{\pi}\int_{0}^1\nabla_1 K(f^c(\alpha)-f^{c}(\beta)-\tau(f^c(\alpha)-z^0(\alpha)-(f^c(\beta)-z^0(\beta))),\tilde{f}(\alpha)-\tilde{f}(\beta)))\\\nonumber
    &\cdot(f^c(\alpha)-z^0(\alpha)-(f^c(\beta)-z^0(\beta)))(\partial_{\alpha}\tilde{f}_{\mu}(\alpha)+\partial_{\alpha}f^c_{\mu}(\alpha)-\partial_{\beta}\tilde{f}_{\mu}(\beta)-\partial_{\beta}f^c_{\mu}(\beta))d\tau d\beta.
\end{align}
Since the component of $\nabla K$ is of $-2$ type, we have
\begin{align}\label{nablaK}
   &\|\nabla_1 K(f^c(\alpha)-f^{c}(\beta)-\tau(f^c(\alpha)-z^0(\alpha)-(f^c(\beta)-z^0(\beta))),\tilde{f}(\alpha)-\tilde{f}(\beta)))(\alpha-\beta)^2\|_{C^1_{\alpha,\beta}([-2\delta,2\delta]\times[-\pi,\pi])}\\\nonumber
   &\lesssim(\|f^c(\alpha)\|_{C^2[-\pi,\pi]}+\|z^0(\alpha)\|_{C^2[-\pi,\pi]}+\|\tilde{f}(\alpha)\|_{C^2[-\pi,\pi]})C(\|f^c-\tau(f^c-z^0)\|_{Arc}).
\end{align}
When $t=0$, we have $z^0=f^c$, then
\begin{align}\label{t0arcchord}
\|f^c-\tau(f^c-z^0)\|_{Arc}=\|f^c\|_{Arc}\lesssim 1.
\end{align}
Moreover, we have the following lemma
\begin{lemma}\label{Arcchorddiff}
For $g,h\in C^1(\mathbb{T})$, $\|h\|_{Arc}< \infty$, there exists $\delta$ depending on $\|h\|_{Arc}$ and $\|h\|_{C^1(\mathbb{T})}$ such that when $\|g-h\|_{C^1(\mathbb{T})}\leq \delta$, we have $\|g\|_{Arc}<\infty.$
\end{lemma}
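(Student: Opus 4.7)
\emph{Proof proposal.} My strategy will be to show that the denominator appearing in the arc–chord quantity depends Lipschitz-continuously on the function (in $C^1$ norm) up to a factor of $(\alpha-\beta)^2$; this error will be absorbed by the strict lower bound that $\|h\|_{Arc}<\infty$ provides.

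First I would set
\[
A_z(\alpha,\beta) = z_2(\alpha)-z_2(\beta) + \tilde{f}_2(\alpha) - \tilde{f}_2(\beta), \qquad B_z(\alpha,\beta) = z_1(\alpha)-z_1(\beta) + \tilde{f}_1(\alpha) - \tilde{f}_1(\beta),
\]
and use the identity $\cosh(A)-\cos(B) = 2\sinh^2(A/2) + 2\sin^2(B/2)$ to rewrite the denominator as
\[
D_z(\alpha,\beta) := |\cosh A_z - \cos B_z| = 2\sinh^2(A_z/2) + 2\sin^2(B_z/2).
\]
By the hypothesis $\|h\|_{Arc}<\infty$, I have $D_h(\alpha,\beta) \geq (\alpha-\beta)^2/\|h\|_{Arc}$, and it suffices to prove $|D_g(\alpha,\beta)-D_h(\alpha,\beta)| \leq C\delta(\alpha-\beta)^2$ for $\alpha \in [-2\delta,2\delta]$, $\beta \in [-\pi,\pi]$, with $C$ depending on $\|h\|_{C^1}$ and $\|\tilde f\|_{C^1}$.

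Next, I would apply the mean value theorem twice. Writing $\sinh^2(A_g/2)-\sinh^2(A_h/2) = \tfrac{1}{2}\sinh(\xi)(A_g-A_h)$ for some $\xi$ between $A_h/2$ and $A_g/2$, one factor of $|\alpha-\beta|$ comes from $|A_g-A_h| = |(g_2-h_2)(\alpha)-(g_2-h_2)(\beta)| \leq \|g-h\|_{C^1}|\alpha-\beta| \leq \delta|\alpha-\beta|$. The second factor of $|\alpha-\beta|$ comes from observing that $|A_h|, |A_g| \leq (\|h\|_{C^1}+\|g\|_{C^1}+\|\tilde f\|_{C^1})|\alpha-\beta|$, hence $|\xi| \leq C|\alpha-\beta|$, and on the bounded domain $|\alpha-\beta|\leq \pi+2\delta$ this yields $|\sinh(\xi)| \leq C|\alpha-\beta|$. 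The same argument applied to $\sin^2(B_g/2)-\sin^2(B_h/2)$ gives the analogous bound, and together
\[
|D_g(\alpha,\beta)-D_h(\alpha,\beta)| \leq C\delta(\alpha-\beta)^2.
\]

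Combining with the lower bound for $D_h$ yields
\[
D_g(\alpha,\beta) \geq \left(\frac{1}{\|h\|_{Arc}} - C\delta\right)(\alpha-\beta)^2,
\]
so taking $\delta < 1/(2C\|h\|_{Arc})$ produces $\|g\|_{Arc} \leq 2\|h\|_{Arc} < \infty$. The main subtlety I anticipate is ensuring the $(\alpha-\beta)^2$ scaling rather than only $|\alpha-\beta|$; that requires exploiting both mean value estimates independently — one to extract a factor of $\delta|\alpha-\beta|$ from $g-h$, the other to extract a factor of $|\alpha-\beta|$ from the intermediate value $\sinh(\xi)$ by using that $A_h, A_g$ are themselves first order in $|\alpha-\beta|$.
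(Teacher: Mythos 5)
Your proposal is correct and takes essentially the same approach as the paper: a mean-value estimate on the denominator, extracting one factor of $|\alpha-\beta|$ from $\|g-h\|_{C^1}$ and a second from the fact that the argument of $\sinh$ (resp.\ $\sin$) is itself first-order in $|\alpha-\beta|$, then absorbing the $O(\delta(\alpha-\beta)^2)$ perturbation into the lower bound coming from $\|h\|_{Arc}<\infty$. The only cosmetic difference is that you pass through the identity $\cosh A - \cos B = 2\sinh^2(A/2)+2\sin^2(B/2)$ before differentiating, whereas the paper applies the fundamental-theorem-of-calculus form of the mean value theorem to $\cosh$ and $\cos$ directly; also note that the paper's displayed lower bound has the arc-chord constant on the wrong side ($\|h\|_{Arc}$ rather than $1/\|h\|_{Arc}$), and your version states it correctly.
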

\begin{proof}
We have
\begin{align*}
    &|\cosh(g_2(\alpha)-g_2(\beta)+\tilde{f}_2(\alpha)-\tilde{f}_2(\beta))-\cosh(h_2(\alpha)-h_2(\beta)+\tilde{f}_2(\alpha)-\tilde{f}_2(\beta))|\\
    &\leq|g_2(\alpha)-g_2(\beta)-(h_2(\alpha)-h_2(\beta))|\int_{0}^{1}|\sinh((1-\tau)(h_2(\alpha)-h_2(\beta))+\tau(g_2(\alpha)-g_2(\beta))+\tilde{f}_2(\alpha)-\tilde{f}_2(\beta))|d\tau\\
    &\leq (\alpha-\beta)^2\|g-h\|_{C^1(T)}C(\|h\|_{C^1(T)}+\|g-h\|_{C^1(T)}),
\end{align*}
and
\begin{align*}
    &|\cos(g_1(\alpha)-g_1(\beta)+\tilde{f}_1(\alpha)-\tilde{f}_1(\beta))-\cos(h_1(\alpha)-h_1(\beta)+\tilde{f}_1(\alpha)-\tilde{f}_1(\beta))|\\
    &\leq|g_1(\alpha)-g_1(\beta)-(h_1(\alpha)-h_1(\beta))|\int_{0}^{1}|\sin((1-\tau)(h_1(\alpha)-h_1(\beta))+\tau(g_1(\alpha)-g_1(\beta))+\tilde{f}_1(\alpha)-\tilde{f}_1(\beta))|d\tau\\
    &\leq (\alpha-\beta)^2\|g-h\|_{C^1(T)}C(\|h\|_{C^1(T)}+\|g-h\|_{C^1(T)}).
\end{align*}
Since 
\[
|\cosh(h_2(\alpha)-h_2(\beta)+\tilde{f}_2(\alpha)-\tilde{f}_2(\beta))-\cos(h_1(\alpha)-h_1(\beta)+\tilde{f}_1(\alpha)-\tilde{f}_1(\beta))|\geq \|h\|_{Arc}|\alpha-\beta|^2,
\]
we have 
\begin{align*}
&|\cosh(g_2(\alpha)-g_2(\beta)+\tilde{f}_2(\alpha)-\tilde{f}_2(\beta))-\cos(g_1(\alpha)-g_1(\beta)+\tilde{f}_1(\alpha)-\tilde{f}_1(\beta))|\\
&\geq (\|h\|_{Arc}-\|g-h\|_{C^1(T)}C(\|h\|_{C^1(T)}+\|g-h\|_{C^1(T)}))|\alpha-\beta|^2. 
\end{align*}
Then we have the result.
\end{proof}
Since we have $z^0(\alpha,t)\in C^{1}([0,t_1],H^3(\mathbb{T}))$, $f^c(\alpha,t)\in C^{1}([0,t_1],H^6(\mathbb{T}))$, then from \eqref{t0arcchord}, and lemma \ref{Arcchorddiff}, there exists $t_2$, satisfying $0\leq t_2\leq t_1$, such that for $0\leq t\leq t_2$,
\begin{align}\label{arcchordf}
\|f^c-\tau(f^c-z^0)\|_{Arc}\lesssim 1.
\end{align}
Then from corollary \ref{goodterm2}, \eqref{Term2bound} ,\eqref{nablaK}, and \eqref{arcchordf}, we have
\[
\|Term_2\|^2_{L^2(\mathbb{T})}\leq\|z^0-f^c\|^2_{L^2(\mathbb{T})}.
\]
Then 
\begin{align*}
    &\frac{d}{dt}\int_{-\pi}^{\pi}|z^0(\alpha,t)-f^c(\alpha,t)|^2d\alpha\\
    &=2\Re\int_{-\pi}^{\pi}(z^0(\alpha,t)-f^c(\alpha,t))\overline{\frac{d}{dt}(z^0(\alpha,t)-f^c(\alpha,t))}d\alpha\\
    &=\sum_{\mu=1,2}2\Re\int_{-\pi}^{\pi}(z^0_{\mu}(\alpha,t)-f^c_{\mu}(\alpha,t))\overline{\lambda(\alpha)\int_{-\pi}^{\pi}K(z^0(\alpha)-z^{0}(\beta),\tilde{f}(\alpha)-\tilde{f}(\beta))(\partial_{\alpha}(z_{\mu}^0-f^c_{\mu})(\alpha)-\partial_{\beta}(z_{\mu}^0-f^c_{\mu})(\beta))d\beta}d\alpha\\
    &+B.T^0,
\end{align*}
where $B.T^0\lesssim\|z^0(\alpha,t)-f^c(\alpha,t)\|_{L^2[-\pi,\pi]}^2$.
Then from corollary \ref{maintermcorollary} when $\gamma=0$, conditions \eqref{RTconditions}, \eqref{zspace}, \eqref{arcconditions}, we have
\[
\frac{d}{dt}\int_{-\pi}^{\pi}|z^0(\alpha,t)-f^c(\alpha,t)|^2d\alpha\lesssim\|z^0(\alpha,t)-f^c(\alpha,t)\|_{L^2(\mathbb{T})}^2.
\]
Moreover, we have $z^0(\alpha,0)=f^c(\alpha,0)$. Therefore we have 
\begin{align}\label{uniqueness}
z^0(\alpha,t)=f^c(\alpha,t),
\end{align}
for $0\leq t\leq t_2$.

 \section{The continuity of z with respect to $\gamma$}\label{continuityfar}
We first show  $\|z(\alpha, \gamma, t)-z(\alpha,\gamma',t)\|_{H^3(\alpha)}\lesssim |\gamma-\gamma'|$.

For the sake of simplicity, we further shrink the time $t_1$ to $\tilde{t}_1$ such that for all $0\leq t\leq \tilde{t}_1$, $\gamma,\gamma'\in[-1,1]$, $\tau\in[0,1]$, we have $
\|\tau z(\alpha,\gamma,t)+(1-\tau)z(\alpha,\gamma',t)-\tau z(\alpha,\gamma,0)-(1-\tau)z(\alpha,\gamma',0)\|_{C^1(\mathbb{T})}=\|\tau z(\alpha,\gamma,t)+(1-\tau)z(\alpha,\gamma',t)-f^c(\alpha,0)\|_{C^1(\mathbb{T})}$ is sufficiently small. Then from lemma \ref{Arcchorddiff}, we have 
\[
\|\tau z(\alpha,\gamma,t)+(1-\tau)z(\alpha,\gamma',t)\|_{Arc}<\infty.
\]
This is not necessary but helps to simplify our estimate in this section.

Now we estimate the difference, we have
\begin{align}\label{diffequation}
    &\frac{dz(\alpha,\gamma)}{dt}-\frac{dz(\alpha,\gamma')}{dt}=T(z(\alpha,\gamma,t),\gamma,t)-T(z(\alpha,\gamma',t),\gamma',t)\\\nonumber
    &=(T(z(\alpha,\gamma,t),\gamma,t)-T(z(\alpha,\gamma',t),\gamma,t))+(\int_{\gamma}^{\gamma'}(\partial_{\eta}T)(z(\alpha,\gamma',t),\eta,t)d\eta)\\\nonumber
    &=Term_1+Term_2.
\end{align}
For $Term_2$, we have
\begin{align}\label{dgammat}
    &(\partial_{\gamma}T)(z(\alpha),\gamma,t)=\frac{d}{d\gamma}(\frac{ic(\alpha)\gamma}{1+ic'(\alpha)\gamma t})\partial_{\alpha}z(\alpha)\\\nonumber
    &+\lambda(\alpha)\int_{-\pi}^{\pi}K(z(\alpha)-z(\beta),\tilde{f}(\alpha)-\tilde{f}(\beta))\frac{d}{d\gamma}(\frac{1+ic'(\beta)\gamma t}{1+ic'(\alpha)\gamma t}-1)d\beta\partial_{\alpha}z(\alpha)\\\nonumber
    &+\lambda(\alpha)\int_{-\pi}^{\pi}K(z(\alpha)-z(\beta),\tilde{f}(\alpha)-\tilde{f}(\beta))(\partial_{\alpha}\tilde{f}(\alpha)-\partial_{\beta}\tilde{f}(\beta))ic'(\beta)t d\beta\\\nonumber
    &=Term_{2,1}+Term_{2,2}+Term_{2,3}.
\end{align}
Since $z(\alpha,\gamma,t)\in L^{\infty}_{t}([0,t_0],H^5(\mathbb{T}))$, we have
\begin{align*}
    \|Term_{2,1}\|_{H^3(\mathbb{T})}\lesssim 1.    \end{align*}
    Moreover,
    \begin{align*}
        &Term_{2,2}=\lambda(\alpha)\int_{-\pi}^{\pi}K(z(\alpha)-z(\beta),\tilde{f}(\alpha)-\tilde{f}(\beta))[\frac{(ic(\beta)-ic(\alpha))t}{1+ic'(\alpha)\gamma t}-\frac{(ic(\beta)-ic(\alpha))\gamma tic'(\alpha)t}{(1+c'(\alpha)\gamma t)^2}]d\beta\partial_{\alpha}z(\alpha).
    \end{align*}
    Since $K$ is of $-1$ type, we have
    \begin{align}\label{Kcondition}
    K(z(\alpha)-z(\beta),\tilde{f}(\alpha)-\tilde{f}(\beta))(\alpha-\beta)\in C^3([-2\delta,2\delta]\times[-\pi,\pi]).
    \end{align}
    Therefore $\|Term_{2,2}\|_{H^3(\mathbb{T})}\lesssim 1$.
    Moreover, $Term_{2,3}$ can be bounded in the similar way since $|\frac{\partial_{\alpha}\tilde{f}(\alpha)-\partial_{\beta}\tilde{f}(\beta)}{\alpha-\beta}|\in C^3([-2\delta,2\delta]\times[-\pi,\pi])$ and we get
    \[
    \|Term_{2,3}\|_{H^3(\mathbb{T})}\lesssim 1.
    \]
    Then we have
    \begin{align}\label{partialgammabound}
        \|(\partial_{\gamma}T)(z(\alpha),\gamma,t)\|_{H^3(\mathbb{T})}\lesssim 1,
    \end{align}
    and 
     \begin{align}\label{diffterm2H3}
        \|Term_2\|_{H^3(\mathbb{T})}\lesssim |\gamma-\gamma'|.
    \end{align}
For $Term_1$, notice that $\tilde{f}(\alpha+ic(\alpha)\gamma t,t)=\tilde{f}(\alpha,t)$, we have
\begin{align}\label{Diffterm1}
    &Term_1=\frac{ic(\alpha)\gamma}{1+ic'(\alpha)\gamma t}\partial_{\alpha}(z(\alpha,\gamma)-z(\alpha,\gamma'))\\\nonumber
    &+\lambda(\alpha)\int_{-\pi}^{\pi}K(z(\alpha,\gamma)-z(\beta,\gamma),\tilde{f}(\alpha)-\tilde{f}(\beta))(\frac{\partial_{\alpha}z(\alpha,\gamma)-\partial_{\alpha}z(\alpha,\gamma')}{1+ic'(\alpha)\gamma t}-(\frac{\partial_{\beta}z(\beta,\gamma)-\partial_{\beta}z(\beta,\gamma')}{1+ic'(\beta)\gamma t}))\\\nonumber
    &\cdot(1+ic'(\beta)\gamma t)d\beta\\\nonumber
    &+\lambda(\alpha)\int_{-\pi}^{\pi}(K(z(\alpha,\gamma)-z(\beta,\gamma),\tilde{f}(\alpha)-\tilde{f}(\beta))-K(z(\alpha,\gamma')-z(\beta,\gamma'),\tilde{f}(\alpha)-\tilde{f}(\beta)))\\\nonumber
    &(\frac{\partial_{\alpha}z(\alpha,\gamma')}{1+ic'(\alpha)\gamma t}-\frac{\partial_{\beta}z(\beta,\gamma')}{1+ic'(\beta)\gamma t}+\partial_{\alpha}\tilde{f}(\alpha)-\partial_{\beta}\tilde{f}(\beta))(1+ic'(\beta)\gamma t)d\beta\\\nonumber
    &=Term_{1,1}+Term_{1,2}+Term_{1,3}.
    \end{align}
It is easy to get
\begin{align}\label{diffterm12L2}
    \|Term_{1,1}\|_{L^2(T)}+\|Term_{1,2}\|_{L^2(T)}\lesssim \|z(\alpha,\gamma)-z(\alpha,\gamma')\|_{H^3(\mathbb{T})}.\end{align}
    Moreover,
\begin{align}\label{Diff13L2}
    &Term_{1,3}=\lambda(\alpha)\int_{-\pi}^{\pi}\int_{0}^{1}\nabla_1 K(\tau(z(\alpha,\gamma)-z(\beta,\gamma))+(1-\tau)(z(\alpha,\gamma')-z(\beta,\gamma')),\tilde{f}(\alpha)-\tilde{f}(\beta))d\tau\\\nonumber
    &\cdot(z(\alpha,\gamma)-z(\beta,\gamma)-(z(\alpha,\gamma')-z(\beta,\gamma')))\\\nonumber
    &\cdot(\frac{\partial_{\alpha}z(\alpha,\gamma')}{1+ic'(\alpha)\gamma t}-\frac{\partial_{\beta}z(\beta,\gamma')}{1+ic'(\beta)\gamma t}+\partial_{\alpha}\tilde{f}(\alpha)-\partial_{\beta}\tilde{f}(\beta))(1+ic'(\beta)\gamma t) d\beta.
\end{align}
Since the component of $\nabla_1 K$ is of $-2 $ type, we have 
\[\sup_{\tau}\|\nabla_1 K(\tau(z(\alpha,\gamma)-z(\beta,\gamma)+(1-\tau)(z(\alpha,\gamma')-z(\beta,\gamma')),\tilde{f}(\alpha)-\tilde{f}(\beta))(\alpha-\beta)^2\|_{C^3([-2\delta,2\delta]\times[-\pi,\pi])}\lesssim 1,
\]
then we  have
\begin{align}\label{diffterm13L2}
\|Term_{1,3}\|_{L^2(T)}\lesssim\|z(\alpha,\gamma)-z(\alpha,\gamma')\|_{H^3(T)}.
\end{align}
Now we control $\partial_{\alpha}^3 Term_1$.
For $\partial_{\alpha}^{3}Term_{1,1}$, we have
\begin{align}\label{diffterm1h3}
    &\partial_{\alpha}^{3}Term_{1,1}=\frac{ic(\alpha)\gamma}{1+ic'(\alpha)\gamma t}\partial_{\alpha}^4(z(\alpha,\gamma)-z(\alpha,\gamma'))+\sum_{1\leq j\leq 3} C_{1,j}\partial_{\alpha}^{j}(\frac{ic(\alpha)\gamma}{1+ic'(\alpha)\gamma t})\partial_{\alpha}^{4-j}(z(\alpha,\gamma)-z(\alpha,\gamma'))\\\nonumber
    &=Term_{1,1,M}^{3}+Term_{1,1,2}^3.
\end{align}
Here $\|Term_{1,1,2}^3\|_{L^2(\mathbb{T})}\lesssim\|z(\alpha,\gamma)-z(\alpha,\gamma')\|_{H^3(\mathbb{T})}$.
For $\partial_{\alpha}^{3}Term_{1,2}$, from lemma \ref{goodterm5} and  \eqref{Kcondition}, we have
\begin{align}\label{diffterm12h3}
    &\partial_{\alpha}^3 Term_{1,2}=\lambda(\alpha)\int_{-\pi}^{\pi}K(z(\alpha,\gamma)-z(\alpha-\beta,\gamma),\tilde{f}(\alpha)-\tilde{f}(\alpha-\beta))\\\nonumber
    &\cdot(\partial_{\alpha}^{3}(\frac{\partial_{\alpha}(z(\alpha,\gamma)-z(\alpha,\gamma'))}{1+ic'(\alpha)\gamma t})-\partial_{\alpha}^{3}(\frac{\partial_{\alpha}(z(\alpha-\beta,\gamma)-z(\alpha-\beta,\gamma'))}{1+ic'(\alpha-\beta)\gamma t}))(1+ic'(\alpha-\beta)\gamma t)d\beta\\\nonumber
    &+Term_{1,2,1}^3,
\end{align}
where $\|Term_{1,2,1}^3\|_{L^2(\mathbb{T})}\lesssim\|z(\alpha,\gamma)-z(\alpha,\gamma')\|_{H^3(\mathbb{T})}$.
Moreover, we have
\begin{align}\label{diffterm12h32}
      &\lambda(\alpha)\int_{-\pi}^{\pi}K(z(\alpha,\gamma)-z(\alpha-\beta,\gamma),\tilde{f}(\alpha)-\tilde{f}(\alpha-\beta))\\\nonumber
    &\cdot(\partial_{\alpha}^{3}(\frac{\partial_{\alpha}(z(\alpha,\gamma)-z(\alpha,\gamma'))}{1+ic'(\alpha)\gamma t})-\partial_{\alpha}^{3}(\frac{\partial_{\alpha}(z(\alpha-\beta,\gamma)-z(\alpha-\beta,\gamma'))}{1+ic'(\alpha-\beta)\gamma t}))(1+ic'(\alpha-\beta)\gamma t)d\beta\\\nonumber
    &= \lambda(\alpha)\int_{-\pi}^{\pi}K(z(\alpha,\gamma)-z(\alpha-\beta,\gamma),\tilde{f}(\alpha)-\tilde{f}(\alpha-\beta))\\\nonumber
    &\cdot(\frac{\partial_{\alpha}^4(z(\alpha,\gamma)-z(\alpha,\gamma'))}{1+ic'(\alpha)\gamma t}-\frac{\partial_{\alpha}^4(z(\alpha-\beta,\gamma)-z(\alpha-\beta,\gamma'))}{1+ic'(\alpha-\beta)\gamma t}))(1+ic'(\alpha-\beta)\gamma t)d\beta\\\nonumber
    &+\sum_{0\leq j\leq 2} C_j  \lambda(\alpha)\int_{-\pi}^{\pi}K(z(\alpha,\gamma)-z(\alpha-\beta,\gamma),\tilde{f}(\alpha)-\tilde{f}(\alpha-\beta))(1+ic'(\alpha-\beta)\gamma t)\\\nonumber
    &\cdot(\partial_{\alpha}^{1+j}(z(\alpha,\gamma)-z(\alpha,\gamma'))\partial_{\alpha}^{3-j}(\frac{1}{1+ic'(\alpha)\gamma t})-\partial_{\alpha}^{1+j}(z(\alpha-\beta,\gamma)-z(\alpha-\beta,\gamma'))\partial_{\alpha}^{3-j}(\frac{1}{1+ic'(\alpha-\beta)\gamma t}))d\beta\\\nonumber
    &=Term_{1,2,M}^{3}+Term_{1,2,3}^{3}.
\end{align}
Then from lemma \ref{goodterm1}, we have
\begin{align}\label{diffterm12h33}
    \|Term_{1,2,3}^{3}\|_{L^2(\mathbb{T})}\lesssim\|z(\alpha,\gamma)-z(\alpha,\gamma')\|_{H^3(\mathbb{T})}.
\end{align}
For $\partial_{\alpha}^3 Term_{1,3}$, we use equation \eqref{Diff13L2}. Since
\begin{align*}
&\|\nabla K(\tau(z(\alpha,\gamma)-z(\beta,\gamma))+(1-\tau)(z(\alpha,\gamma')-z(\beta,\gamma')),\tilde{f}(\alpha)-\tilde{f}(\beta))(\alpha-\beta)^2
\|_{C^3([-2\delta,2\delta]\times[-\pi,\pi])}\lesssim 1,
\end{align*}
and 
\begin{align*}
    \frac{\partial_{\alpha}z(\alpha,\gamma')}{1+ic'(\alpha)\gamma t}+\partial_{\alpha}\tilde{f}(\alpha)\in H^4(\mathbb{T}),
\end{align*}
from lemma \ref{goodterm6} we have
\begin{align}\label{diffterm13h3}
    \|\partial_{\alpha}^3 Term_{1,3}\|_{L^2(\mathbb{T})}\lesssim\|z(\alpha,\gamma)-z(\alpha,\gamma')\|_{H^3(\mathbb{T})}.
\end{align}
In conclusion, from \eqref{diffterm12L2}, \eqref{diffterm13L2}, and \eqref{diffterm2H3}, we have
\begin{align*}
    &\frac{d}{dt}\int_{-\pi}^{\pi}|z(\alpha,\gamma)-z(\alpha,\gamma')|^2d\alpha=2\Re \int_{-\pi}^{\pi} (z(\alpha,\gamma)-z(\alpha,\gamma'))\overline{Term_{1}+Term_{2}}d\alpha\\
    &\lesssim|\gamma-\gamma'|^2+\|z(\alpha,\gamma)-z(\alpha,\gamma')\|_{H^3(\mathbb{T})}^2.
\end{align*}
From \eqref{diffterm1h3} ,\eqref{diffterm12h3}, \eqref{diffterm12h32}, \eqref{diffterm12h33}, \eqref{diffterm13h3} and \eqref{diffterm2H3}, we have
\begin{align*}
&\frac{d}{dt}\int_{-\pi}^{\pi}|\partial_{\alpha}^3(z(\alpha,\gamma)-z(\alpha,\gamma'))|^2d\alpha\\
&=2\Re\int_{-\pi}^{\pi}\partial_{\alpha}^3(z(\alpha,\gamma)-z(\alpha,\gamma'))\cdot\overline{\partial_{\alpha}^3 Term_1+\partial_{\alpha}^3 Term_2}d\alpha\\
&=2\Re\int_{-\pi}^{\pi}\partial_{\alpha}^3(z(\alpha,\gamma)-z(\alpha,\gamma'))\cdot\overline{ Term_{1,1,M}^3+Term_{1,2,M}^3}d\alpha+B.T^0\\
&=\sum_{\mu=1,2}2\Re\int_{-\pi}^{\pi}\partial_{\alpha}^3(z_{\mu}(\alpha,\gamma)-z_{\mu}(\alpha,\gamma'))\overline{\frac{ic(\alpha)\gamma}{1+ic'(\alpha)\gamma t}\partial_{\alpha}^4(z_{\mu}(\alpha,\gamma)-z(\alpha,\gamma'))}d\alpha\\
&+\sum_{\mu=1,2}2\Re\int_{-\pi}^{\pi}\partial_{\alpha}^3(z_{\mu}(\alpha,\gamma)-z_{\mu}(\alpha,\gamma'))\overline{\lambda(\alpha)\int_{-\pi}^{\pi}K(z(\alpha,\gamma)-z(\alpha-\beta,\gamma),\tilde{f}(\alpha)-\tilde{f}(\alpha-\beta))}\\\nonumber
    &\overline{\cdot(\frac{\partial_{\alpha}^4(z(\alpha,\gamma)-z(\alpha,\gamma'))}{1+ic'(\alpha)\gamma t}-\frac{\partial_{\alpha}^4(z(\alpha-\beta,\gamma)-z(\alpha-\beta,\gamma'))}{1+ic'(\alpha-\beta)\gamma t}))(1+ic'(\alpha-\beta)\gamma t)d\beta}d\alpha\\
    &+B.T^0,
\end{align*}
where
\[
|B.T^0|\lesssim \|z(\alpha,\gamma)-z(\alpha,\gamma')\|_{H^3(\mathbb{T})}^2+|\gamma-\gamma'|^2.
\]
Then from corollary \ref{maintermcorollary}, we have
\begin{align*}
 & \frac{d}{dt}\int_{-\pi}^{\pi}|\partial_{\alpha}^3(z(\alpha,\gamma)-z(\alpha,\gamma'))|^2d\alpha\lesssim \|z(\alpha,\gamma)-z(\alpha,\gamma')\|_{H^3(\mathbb{T})}^2+|\gamma-\gamma'|^2.
\end{align*}
Moreover, the initial date $\|z(\alpha,\gamma)-z(\alpha,\gamma')\|_{H^3(\mathbb{T})}^2|_{t=0}=0$.
Therefore we have
\begin{align}\label{gammacontinuity}
    \|\frac{z(\alpha,\gamma)-z(\alpha,\gamma')}{\gamma-\gamma'}\|_{H^3(\mathbb{T})}\lesssim 1.
\end{align}
\section{The differentiability of z with respect to $\gamma$}\label{diffrenfar}
Now we show the differentiability. We define a new function $w(\alpha,\gamma,t)$. It satisfies the equation that $\frac{dz}{d\gamma}$ would satisfy if it is differentiable. 

Let $w$ be the solution of the following equation:
\begin{align}\label{Wequation} 
    \frac{dw(\alpha,\gamma,t)}{dt}=\tilde{T}(w)=D_zT(z(\alpha,\gamma,t),\gamma,t)[w]+\partial_{\gamma}T(\alpha,\gamma,t),
\end{align}
with initial value $w(\alpha,\gamma,0)=0$.
Here $D_z T(z(\alpha,\gamma,t),\gamma,t)[w]$ is the Gateaux derivative.

As in the existence of $z(\alpha,\gamma,t)$, we first show the energy estimate.
First, from \eqref{partialgammabound}, we have
  \begin{align*}
        \|(\partial_{\gamma}T)(z(\alpha),\gamma,t)\|_{H^3(\mathbb{T})}\lesssim 1.
    \end{align*}
    Moreover,
\begin{align}\label{dzw}
    &D_zT(z(\alpha,\gamma,t),\gamma,t)[w]=\frac{ic(\alpha)\gamma}{1+ic^{'}(\alpha)\gamma t}\partial_{\alpha}w(\alpha,\gamma)+\\\nonumber
&\lambda(\alpha)\int_{-\pi}^{\pi} K(z(\alpha,\gamma)-z(\beta,\gamma),\tilde{f}(\alpha)-\tilde{f}(\beta))(\frac{\partial_{\alpha}w(\alpha,\gamma)}{1+ic'(\alpha)\gamma t}-\frac{\partial_{\beta}w(\beta,\gamma)}{1+ic'(\beta)\gamma t})(1+ic'(\beta)\gamma t)d\beta\\\nonumber
&+\lambda(\alpha)\int_{-\pi}^{\pi} (\nabla_1 K(z(\alpha,\gamma)-z(\beta,\gamma),\tilde{f}(\alpha)-\tilde{f}(\beta))\cdot(w(\alpha,\gamma)-w(\beta,\gamma)))\\\nonumber
&\cdot(\partial_{\alpha}\tilde{f}(\alpha)-\partial_{\beta}\tilde{f}(\beta)+\frac{\partial_{\alpha}z(\alpha,\gamma)}{1+ic'(\alpha)\gamma t}-\frac{\partial_{\beta}z(\beta,\gamma)}{1+ic'(\beta)\gamma t})(1+ic'(\beta)\gamma t)d\beta.
\end{align}
It has the similar structure as \eqref{Diffterm1} and \eqref{Diff13L2}. The only difference between the first two terms in \eqref{Diffterm1} and \eqref{dzw} is that $\partial_{\alpha} w(\alpha,\gamma)$ takes the place of $\frac{z(\alpha,\gamma)-z(\alpha,\gamma')}{\gamma-\gamma'}.$ In \eqref{Diff13L2}, and the third term of $\eqref{Diffterm1}$, $w(\alpha,\gamma)$ takes the place of $\frac{z(\alpha,\gamma)-z(\alpha,\gamma')}{\gamma-\gamma'}$ and $\int_{0}^{1}\nabla_1 K(\tau(z(\alpha,\gamma)-z(\beta,\gamma))+(1-\tau)(z(\alpha,\gamma')-z(\beta,\gamma')),\tilde{f}(\alpha)-\tilde{f}(\beta))d\tau$ is replaced by $\nabla_1 K(z(\alpha,\gamma)-z(\beta,\gamma),\tilde{f}(\alpha)-\tilde{f}(\beta))$. Therefore we could use the similar estimate and have
\[
\frac{d}{dt}\|w\|^2_{H^3(\mathbb{T})}\lesssim C(\|w\|^2_{H^3(\mathbb{T})}).
\]
As in the existence of $z(\alpha,\gamma,t)$, we could do the similar energy estimate to the approximation of the equation
\begin{align}\label{Wequationappro} 
    \frac{dw^n(\alpha,\gamma,t)}{dt}=\tilde{T}(w)=\varphi_{n}*(D_zT(z(\alpha,\gamma,t),\gamma,t)[\varphi_n*w^n])+\varphi_{n}*(\partial_{\gamma}T(\alpha,\gamma,t)),
\end{align}
with initial value $w^n(\alpha,\gamma,0)=0$.
Then from the Picard theorem and  compactness argument, there exists $0\leq t_3\leq t_1$, such that 
\[
w(\alpha,\gamma,t)=\int_{0}^{t}\tilde{T}(w(\alpha,\gamma,\tau))d\tau,
\]
and 
\begin{align}\label{wLinfty}
    \|w(\alpha,\gamma,t)\|_{L^{\infty}([0,t_3],H^3_{\alpha}(\mathbb{T}))}\lesssim 1.
\end{align}
We claim there is an uniform $t_3$ holds for all $\gamma\in[-1,1]$.
Moreover, we have the following lemma:
   \begin{lemma}\label{tildeTproperty}
    For any $g(\alpha),h(\alpha)\in H^{j+1}(\mathbb{T})$, $j\leq 2$, we have
    \begin{align}\label{upperbound}
    \|\tilde{T}(g(\alpha),\gamma,t)\|_{H^j(\mathbb{T})}\lesssim 1,
    \end{align}
     \begin{align}
    \|\tilde{T}(g(\alpha),\gamma,t)-\tilde{T}(h(\alpha),\gamma,t)\|_{H^j(\mathbb{T})}\lesssim \|g(\alpha)-h(\alpha)\|_{H^{j+1}(\mathbb{T})},
    \end{align}
     \begin{align}
   \lim_{t\to t'} \|\tilde{T}(g(\alpha),\gamma,t)-\tilde{T}(g(\alpha),\gamma,t')\|_{H^j(\mathbb{T})}=0 .
    \end{align}
\end{lemma}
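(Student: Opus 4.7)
The plan is to mirror the proof of Lemma \ref{Tproperty}, treating $\tilde T$ as the sum of a linear operator in its first argument, $D_zT(z(\alpha,\gamma,t),\gamma,t)[g]$, and a source term $\partial_\gamma T(\alpha,\gamma,t)$ that does not depend on $g$ or $h$. The source term was already controlled in \eqref{partialgammabound} where we showed $\|\partial_\gamma T(z,\gamma,t)\|_{H^3(\mathbb{T})}\lesssim 1$, so it contributes to every $H^j$-bound with $j\leq 2$ and drops out of the Lipschitz difference $\tilde T(g)-\tilde T(h)=D_zT(z)[g-h]$. Everything therefore reduces to analyzing the three pieces of \eqref{dzw}: the transport-type term $\frac{ic(\alpha)\gamma}{1+ic'(\alpha)\gamma t}\partial_\alpha g$, a $K$-integral term carrying first derivatives of $g$, and a commutator-type term in which $\nabla_1 K$ is paired with $(g(\alpha)-g(\beta))$ and with the first-order factor $\partial_\alpha z-\partial_\beta z+\partial_\alpha\tilde f-\partial_\beta\tilde f$.

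For \eqref{upperbound}, I would distribute $j\leq 2$ derivatives across these three pieces. The transport term costs exactly one derivative and uses the $C^{100}$ regularity of $c(\alpha)$. For the $K$-integral, since $K(z(\alpha)-z(\beta),\tilde f(\alpha)-\tilde f(\beta))$ is of $-1$ type and $z\in L^\infty_t H^5$, $\tilde f\in C^1_tH^6$, one has $\|K(\cdot)(\alpha-\beta)\|_{C^{j+1}([-2\delta,2\delta]\times\mathbb{T})}\lesssim 1$, so after the split used in the proof of Lemma \ref{Tproperty}, the main term where all new derivatives hit $g$ is bounded by Lemma \ref{goodterm1} in terms of $\|g\|_{H^{j+1}}$, and the remainder terms have the $K_{-1}^{j'}$-structure of \eqref{k-sigma01} and fall under Lemma \ref{goodterm3}. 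The commutator piece is handled analogously: $\nabla_1K$ is of $-2$ type, so $\nabla_1K\cdot(g(\alpha)-g(\beta))\cdot(\alpha-\beta)\in C^{j+1}$ with norm controlled by $\|g\|_{H^{j+1}}$ and $\|z\|_{H^5}$, and the remaining factor $\partial_\alpha z-\partial_\beta z+\partial_\alpha\tilde f-\partial_\beta\tilde f$ is smooth enough that Lemma \ref{goodterm1} closes the estimate.

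The Lipschitz bound in $g$ is actually slightly easier: because $\tilde T(g)-\tilde T(h)=D_zT(z)[g-h]$ is linear, the very same decomposition with $g-h$ in place of $g$ gives the required estimate by $\|g-h\|_{H^{j+1}}$. For the time continuity, every $t$-dependence in $D_zT(z(\alpha,\gamma,t),\gamma,t)[g]$ enters either through the smooth multiplier $\frac{1}{1+ic'(\alpha)\gamma t}$, through $z(\cdot,\gamma,t)$, or through $\tilde f(\cdot,t)$. From \eqref{zspace2} we have $z\in C^0([0,t_1],H^4(\mathbb{T}))$, and $\tilde f\in C^1([0,t_0],H^6(\mathbb{T}))$ by hypothesis; Lemma \ref{Arcchorddiff} ensures the arc-chord bound persists for $t$ near $t'$, so the kernels $K$ and $\nabla_1K$ are Lipschitz continuous in $t$ with values in the spaces needed for the kernel lemmas. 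Writing $\tilde T(g,\gamma,t)-\tilde T(g,\gamma,t')$ as an integral of $\partial_s$ of these continuous arguments and invoking Lemmas \ref{goodterm1} and \ref{goodterm3} as above, the $H^j$-norm vanishes as $t\to t'$.

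The main obstacle is the commutator piece of $D_zT(z)[g]$: after taking $j$ derivatives one must recognize that the combination $\nabla_1K\cdot(g(\alpha)-g(\beta))$ effectively behaves like a $-1$-type kernel acting on $g$, so that the top-derivative contribution can be absorbed via Lemma \ref{goodterm1} without costing more than one derivative on $g$. This is exactly the book-keeping already carried out in \eqref{diffterm13h3} in Section \ref{continuityfar}, so the proof can be compressed by citing those computations directly with $g$ in place of $\frac{z(\alpha,\gamma)-z(\alpha,\gamma')}{\gamma-\gamma'}$ and $\nabla_1K(z(\alpha,\gamma)-z(\beta,\gamma),\tilde f(\alpha)-\tilde f(\beta))$ in place of the averaged $\nabla_1K$.
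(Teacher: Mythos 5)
Your proposal is correct, and it is an expansion of the paper's (extremely terse) proof, whose entire content is the one-line remark that the bounds follow from $z\in L^\infty([0,t_1],H^5(\mathbb{T}))\cap C^0([0,t_1],H^4(\mathbb{T}))$. The decomposition you use — write $\tilde T(g)=D_zT(z)[g]+\partial_\gamma T$, bound the $g$-independent source via \eqref{partialgammabound}, and then treat the three pieces of $D_zT(z)[g]$ from \eqref{dzw} (transport, $K$-integral carrying $\partial_\alpha g$, and the $\nabla_1K\cdot(g(\alpha)-g(\beta))$ commutator) using the kernel lemmas \ref{goodterm1} and \ref{goodterm3} as in \eqref{diffterm1h3}--\eqref{diffterm13h3} — is exactly the bookkeeping the author is implicitly invoking, and your observation that the Lipschitz bound in $g$ is free by linearity (so no arc-chord hypothesis on $g$ or $h$ is needed, unlike in Lemma \ref{Tproperty}) is the right reason the statement is \emph{easier} than Lemma \ref{Tproperty}. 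One small point: for the time continuity you bring in Lemma \ref{Arcchorddiff} to propagate the arc-chord bound, but that step is not really needed here, since the kernel is evaluated at the fixed trajectory $z(\cdot,\gamma,t)$, which already satisfies $\|z\|_{Arc}<\infty$ uniformly on $[0,t_1]$ by \eqref{arcconditions}; continuity in $t$ of the kernel then follows directly from $z\in C^0_tH^4$ and $\tilde f\in C^1_tH^6$. This does not affect correctness, merely economy.
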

\begin{proof}
It is easy to get these bounds since $z(\alpha,\gamma,t)\in L^{\infty}([0,t_0], H_{\alpha}^5(\mathbb{T}))\cap C^{0}([0,t_0], H_{\alpha}^4(\mathbb{T})).$
\end{proof}
Then we have $w(\alpha,\gamma,t)\in  L^{\infty}([0,t_3], H_{\alpha}^3(\mathbb{T}))\cap C^{0}([0,t_3], H_{\alpha}^2(\mathbb{T}))\cap C^{1}([0,t_3], H_{\alpha}^1(\mathbb{T})).$ 

We claim that we could do the similar argument as in the estimate of $\|z(\alpha,\gamma)-z(\alpha,\gamma')\|_{H^3_{\alpha}(\mathbb{T})}\lesssim|\gamma-\gamma'|$ to get
\[
\|w(\alpha,\gamma)-w(\alpha,\gamma)\|_{H^1_{\alpha}(\mathbb{T})}\lesssim |\gamma-\gamma'|.
\]
Then from \eqref{wLinfty}, we have
\begin{align}\label{wcontinuous}
\lim_{\gamma'\to\gamma}\|w(\alpha,\gamma')-w(\alpha,\gamma)\|_{H^2_{\alpha}(\mathbb{T})}=0.
\end{align}
Now we show $w$ is the derivative of $z$ with respect of $\gamma$. Let
\[
v(\alpha,\gamma,\gamma',t)=\frac{z(\alpha,\gamma,t)-z(\alpha,\gamma',t)}{\gamma-\gamma'}-w(\alpha,\gamma,t).
\]
From \eqref{Wequation} and \eqref{diffequation}, we have
\begin{align*}
&\frac{dv(\alpha,\gamma,\gamma',t)}{dt}=\frac{T(z(\alpha,\gamma,t),\gamma,t)-T(z(\alpha,\gamma',t),\gamma,t)}{\gamma-\gamma'}+\frac{\int_{\gamma}^{\gamma'}(\partial_{\eta}T)(z(\alpha,\gamma',t),\eta,t)d\eta}{\gamma-\gamma'}\\
&-D_zT(z(\alpha,\gamma,t),\gamma,t)[w(\alpha,\gamma,t)]-(\partial_{\gamma}T)(z(\alpha,\gamma,t),\gamma,t)\\
&=(\frac{T(z(\alpha,\gamma,t),\gamma,t)-T(z(\alpha,\gamma',t),\gamma,t)}{\gamma-\gamma'}-D_zT(z(\alpha,\gamma,t),\gamma,t)[w(\alpha,\gamma,t)])\\
&+(\frac{\int_{\gamma}^{\gamma'}(\partial_{\eta}T)(z(\alpha,\gamma',t),\eta,t)d\eta}{\gamma-\gamma'}-(\partial_{\gamma}T)(z(\alpha,\gamma,t),\gamma,t))\\
&=Term_1+Term_2.
\end{align*}
We have 
\begin{align*}
    &Term_2=\frac{1}{\gamma-\gamma'}\int_{\gamma}^{\gamma'}(\partial_{\eta}T)(z(\alpha,\gamma',t),\eta,t)d\eta-(\partial_{\gamma}T)(z(\alpha,\gamma,t),\gamma,t)\\
    &=\int_{\gamma}^{\gamma'}\frac{(\partial_{\eta}T)(z(\alpha,\gamma',t),\eta,t)-(\partial_{\gamma}T)(z(\alpha,\gamma',t),\gamma,t)}{\gamma-\gamma'}d\eta+((\partial_{\gamma}T)(z(\alpha,\gamma',t),\gamma,t)-(\partial_{\gamma}T)(z(\alpha,\gamma,t),\gamma,t)).
\end{align*}
From \eqref{dgammat} and \eqref{gammacontinuity}, we have
\[
\|((\partial_{\gamma}T)(z(\alpha,\gamma',t),\gamma,t)-(\partial_{\gamma}T)(z(\alpha,\gamma,t),\gamma,t))\|_{L^2(\mathbb{T})}\lesssim\|z(\alpha,\gamma,t)-z(\alpha,\gamma',t)\|_{H^2(\mathbb{T})}\lesssim |\gamma-\gamma'|,
\]
and
\[
\|(\partial_{\eta}T)(z(\alpha,\gamma',t),\eta,t)-(\partial_{\gamma}T)(z(\alpha,\gamma',t),\gamma,t)\|_{L^2(\mathbb{T})}\lesssim |\eta-\gamma|,
\]
Then 
\[
\|Term_{2}\|_{L^2(\mathbb{T})}\lesssim |\gamma-\gamma'|.
\]
Moreover, for $Term_1$, from \eqref{dzw}, \eqref{diffequation}, \eqref{Diffterm1} and \eqref{Diff13L2} we have
\begin{align*}
    &\frac{T(z(\alpha,\gamma,t),\gamma,t)-T(z(\alpha,\gamma',t),\gamma,t)}{\gamma-\gamma'}-D_zT(z(\alpha,\gamma,t),\gamma,t)[w(\alpha,\gamma,t)]\\
    &=\frac{ic(\alpha)\gamma}{1+ic'(\alpha)\gamma t}\partial_{\alpha}(v(\alpha,\gamma,\gamma'))\\
    &+\lambda(\alpha)\int_{-\pi}^{\pi}K(z(\alpha,\gamma)-z(\beta,\gamma),\tilde{f}(\alpha)-\tilde{f}(\beta))(\frac{\partial_{\alpha}v(\alpha,\gamma,\gamma')}{1+ic'(\alpha)\gamma t}-\frac{\partial_{\beta}v(\beta,\gamma,\gamma')}{1+ic'(\beta)\gamma t})(1+ic'(\beta)\gamma t)d\beta\\
    &+ \lambda(\alpha)\int_{-\pi}^{\pi}\int_{0}^{1}\nabla_1 K(\tau(z(\alpha,\gamma)-z(\beta,\gamma))+(1-\tau)(z(\alpha,\gamma')-z(\beta,\gamma')),\tilde{f}(\alpha)-\tilde{f}(\beta))\\\nonumber
    &\cdot(\frac{z(\alpha,\gamma)-z(\beta,\gamma)-(z(\alpha,\gamma')-z(\beta,\gamma'))}{\gamma-\gamma'})\\\nonumber
    &(\frac{\partial_{\alpha}z(\alpha,\gamma')}{1+ic'(\alpha)\gamma t}-\frac{\partial_{\beta}z(\beta,\gamma')}{1+ic'(\beta)\gamma t}+\partial_{\alpha}\tilde{f}(\alpha)-\partial_{\beta}\tilde{f}(\beta))(1+ic'(\beta)\gamma t)d\tau d\beta\\
    &-\lambda(\alpha)\int_{-\pi}^{\pi} \nabla_1 K(z(\alpha,\gamma)-z(\beta,\gamma),\tilde{f}(\alpha)-\tilde{f}(\beta))\cdot(w(\alpha,\gamma)-w(\beta,\gamma))\\\nonumber
&\cdot(\partial_{\alpha}\tilde{f}(\alpha)-\partial_{\beta}\tilde{f}(\beta)+\frac{\partial_{\alpha}z(\alpha,\gamma)}{1+ic'(\alpha)\gamma t}-\frac{\partial_{\beta}z(\beta,\gamma)}{1+ic'(\beta)\gamma t})(1+ic'(\beta)\gamma t)d\beta\\
    &=Term_{1,1}+Term_{1,2}+Term_{1,3}+Term_{1,4}.
\end{align*}
Here 
\begin{align*}
    &Term_{1,3}+Term_{1,4}\\
    &=\lambda(\alpha)\int_{-\pi}^{\pi} \int_{0}^{1}\nabla_1 K(\tau(z(\alpha,\gamma)-z(\beta,\gamma))+(1-\tau)(z(\alpha,\gamma')-z(\beta,\gamma')),\tilde{f}(\alpha)-\tilde{f}(\beta))d\tau \cdot(v(\alpha,\gamma,\gamma')-v(\beta,\gamma,\gamma'))\\\nonumber
&\cdot(\partial_{\alpha}\tilde{f}(\alpha)-\partial_{\beta}\tilde{f}(\beta)+\frac{\partial_{\alpha}z(\alpha,\gamma')}{1+ic'(\alpha)\gamma t}-\frac{\partial_{\beta}z(\beta,\gamma')}{1+ic'(\beta)\gamma t})(1+ic'(\beta)\gamma t) d\beta\\
&+\lambda(\alpha)\int_{-\pi}^{\pi}[\int_{0}^{1}\nabla_1 K(\tau(z(\alpha,\gamma)-z(\beta,\gamma))+(1-\tau)(z(\alpha,\gamma')-z(\beta,\gamma')),\tilde{f}(\alpha)-\tilde{f}(\beta))d\tau\\
&-\nabla_1 K(z(\alpha,\gamma)-z(\beta,\gamma),\tilde{f}(\alpha)-\tilde{f}(\beta))]\cdot(w(\alpha,\gamma)-w(\beta,\gamma))\\\nonumber
    &(\frac{\partial_{\alpha}z(\alpha,\gamma')}{1+ic'(\alpha)\gamma t}-\frac{\partial_{\beta}z(\beta,\gamma')}{1+ic'(\beta)\gamma t}+\partial_{\alpha}\tilde{f}(\alpha)-\partial_{\beta}\tilde{f}(\beta))(1+ic'(\beta)\gamma t) d\beta\\
    &+\lambda(\alpha)\int_{-\pi}^{\pi}\nabla_1 K(z(\alpha,\gamma)-z(\beta,\gamma),\tilde{f}(\alpha)-\tilde{f}(\beta))\cdot(w(\alpha,\gamma)-w(\beta,\gamma))\\\nonumber
    &(\frac{\partial_{\alpha}z(\alpha,\gamma')}{1+ic'(\alpha)\gamma t}-\frac{\partial_{\beta}z(\beta,\gamma')}{1+ic'(\beta)\gamma t}-(\frac{\partial_{\alpha}z(\alpha,\gamma)}{1+ic'(\alpha)\gamma t}-\frac{\partial_{\beta}z(\beta,\gamma)}{1+ic'(\beta)\gamma t}))(1+ic'(\beta)\gamma t) d\beta\\
    &=Term_{1,5}+Term_{1,6}+Term_{1,7}.
\end{align*}
Since the component of $\nabla_1 K$ is of $-2$ type, we could use lemma $\ref{goodterm1}$ to bound  $Term_{1,5}$ and have
\begin{align*}
    \|Term_{1,5}\|_{L^2(\mathbb{T})}^2\lesssim\|v(\alpha,\gamma,\gamma')\|_{L^2(\mathbb{T})}^2.
\end{align*}
For $Term_{1,6}$, we have
\begin{align*}
    &\|[\int_{0}^{1}\nabla_1 K(\tau(z(\alpha,\gamma)-z(\beta,\gamma))+(1-\tau)(z(\alpha,\gamma')-z(\beta,\gamma')),\tilde{f}(\alpha)-\tilde{f}(\beta))d\tau\\
&-\nabla_1 K(z(\alpha,\gamma)-z(\beta,\gamma),\tilde{f}(\alpha)-\tilde{f}(\beta))](\alpha-\beta)^2\|_{C^1([-2\delta,\delta]\times [-\pi,\pi])}\\
&\lesssim \|z(\alpha,\gamma)-z(\alpha,\gamma')\|_{H^3(\mathbb{T})}.
\end{align*}
Then from lemma \ref{goodterm2}, and \eqref{gammacontinuity}, we have
\begin{align*}
    \|Term_{1,6}\|_{L^2(\mathbb{T})}\lesssim |\gamma-\gamma'|.
\end{align*}
From lemma \eqref{goodterm2}, we again have
\begin{align*}
    &\|Term_{1,7}\|_{L^2(\mathbb{T})}\\
    &\lesssim\|\nabla_1 K(z(\alpha,\gamma)-z(\beta,\gamma),\tilde{f}(\alpha)-\tilde{f}(\beta))(\alpha-\beta)^2\|_{C^1([-2\delta,\delta]\times [-\pi,\pi])}\|w(\alpha,\gamma)\|_{C^2(\mathbb{T})}\|\frac{\partial_{\alpha}z(\alpha,\gamma')}{1+ic'(\alpha)\gamma t}-\frac{\partial_{\alpha}z(\alpha,\gamma)}{1+ic'(\alpha)\gamma t}\|_{L^2(\mathbb{T})}\\
    &\lesssim \|z(\alpha,\gamma)-z(\alpha,\gamma')\|_{H^1(\mathbb{T})}\lesssim |\gamma-\gamma'|,
\end{align*}
where we use \eqref{wLinfty} and \eqref{gammacontinuity}.
Therefore we have
\begin{align*}
    &\frac{d}{dt}\int_{-\pi}^{\pi}|v(\alpha,\gamma,\gamma')|^2d\alpha\\
    &=2\Re\int_{-\pi}^{\pi}v(\alpha,\gamma,\gamma')\overline{\frac{ic(\alpha)\gamma}{1+ic'(\alpha)\gamma t}\partial_{\alpha}(v(\alpha,\gamma,\gamma'))}d\alpha\\
    &+2\Re\int_{-\pi}^{\pi}v(\alpha,\gamma,\gamma')\overline{\lambda(\alpha)\int_{-\pi}^{\pi}K(z(\alpha,\gamma)-z(\beta,\gamma),\tilde{f}(\alpha)-\tilde{f}(\beta))(\frac{\partial_{\alpha}v(\alpha,\gamma,\gamma')}{1+ic'(\alpha)\gamma t}-\frac{\partial_{\beta}v(\beta,\gamma,\gamma')}{1+ic'(\beta)\gamma t})(1+ic'(\beta)\gamma t)d\beta}d\alpha\\
    &+B.T^0,
\end{align*}
where
\[
B.T^0\lesssim |\gamma-\gamma'|^2+\|v(\alpha,\gamma,\gamma')\|_{L^2(\mathbb{T})}^2.
\]
Then by corollary \ref{maintermcorollary}, and the initial value $v(\alpha,\gamma,\gamma',0)=0$, from the Gronwall's inequality we have
$\lim_{\gamma'\to\gamma}\|v(\alpha,\gamma,\gamma',t)\|_{L^2(\mathbb{T})}=0$ when $t\leq t_3$.

Form \eqref{wLinfty}, and \eqref{gammacontinuity}, we have
\begin{align*}
    \|v(\alpha,\gamma,\gamma',t)\|_{H^3(\mathbb{T})}\lesssim 1.
\end{align*}
Moreover from the interpolation theorem, we have
\begin{align*}
    \lim_{\gamma'\to\gamma}\|v(\alpha,\gamma,\gamma',t)\|_{H^2(\mathbb{T})}=0.
\end{align*}
Then from \eqref{wcontinuous}, we have 
\[
z(\alpha,\gamma)\in C^1_\gamma([-1,1],H^2_{\alpha}(\mathbb{T})),
\]
with
\[
\frac{d}{d\gamma}z(\alpha,\gamma, t) =w(\alpha,\gamma).
\]
From \eqref{Wequation}, we also have
\begin{align*}
    \frac{d}{dt}\frac{d}{d\gamma}z=\frac{dw}{dt}=\frac{dT(z)}{d\gamma}=\frac{d}{d\gamma}\frac{d}{dt}z.
\end{align*}
In conclusion we have 

\begin{align}\label{zconditions}
\begin{cases}
&z(\alpha,\gamma,t)\in C^1_{t}([0,t_1], H_{\alpha}^3(\mathbb{T})),\\
&z(\alpha,\gamma,t)\in C^1_\gamma([-1,1],H^2_{\alpha}(\mathbb{T})),\\
&\frac{dz}{d\gamma}\in C^{0}_t([0,t_3], H_{\alpha}^2(\mathbb{T}))\cap C^{1}_t([0,t_3], H_{\alpha}^1(\mathbb{T})),\\
&\frac{d}{dt}\frac{d}{d\gamma}z=\frac{d}{d\gamma}\frac{d}{dt}z,\\
&z(\alpha,0,t)=f^c(\alpha,t), \text{ when }0\leq t\leq t_2.
\end{cases}
\end{align}
\section{The analyticity}\label{analyticityfar}
In this section, we want to show $f(\alpha,t)$ is a real analytic function near $0$ for each fixed t, $0< t< t_3$. We first show that it is enough to prove
\begin{align}\label{anaequation}
    \frac{ic(\alpha)t}{1+ic^{'}(\alpha)\gamma t}\frac{d}{d\alpha}z(\alpha,\gamma, t)-\frac{d}{d\gamma}z(\alpha,\gamma,t)=0.
\end{align}
\begin{lemma}
If $z$ satisfies \eqref{anaequation}, then $f(x)$ can be analytically extended to $D_A=\{\alpha+iy|-\infty<\alpha<\infty, -c(\alpha)t\leq y\leq c(\alpha)t\}.$
\end{lemma}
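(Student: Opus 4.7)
The plan is to use the curve-by-curve solutions $z(\alpha,\gamma,t)$ to build a candidate analytic extension $F$ of $f^c$ to $D_A$, and then show that equation \eqref{anaequation} is precisely the Cauchy--Riemann equation for $F$ after the change of variables $(\alpha,\gamma)\mapsto(\alpha,\beta)$ with $\beta=c(\alpha)\gamma t$.

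First I would fix $t\in(0,t_3)$ and define $F\colon D_A\to\mathbb{C}^2$ by
\[
F(\alpha+i\beta)=z\!\left(\alpha,\tfrac{\beta}{c(\alpha)t},t\right)\qquad\text{when } c(\alpha)>0,
\]
and $F(\alpha)=z(\alpha,0,t)$ when $c(\alpha)=0$. By the uniqueness result \eqref{uniqueness}, the restriction of $F$ to the real axis equals $f^c(\alpha,t)=\lambda(\alpha)f(\alpha,t)$, which coincides with $f(\alpha,t)$ on $|\alpha|\leq\delta$ since $\lambda=1$ there. From \eqref{zconditions} and Sobolev embedding, $z(\cdot,\gamma,t)\in C^2$ and $\partial_\gamma z(\cdot,\gamma,t)\in C^1$ with joint continuity in $(\alpha,\gamma)$, so by the chain rule $F$ is $C^1$ on the open set $\{(\alpha,\beta):c(\alpha)>0,\ |\beta|<c(\alpha)t\}$.

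Next I would verify the Cauchy--Riemann equation $(\partial_\alpha+i\partial_\beta)F=0$ on this open set. Writing $\gamma=\beta/(c(\alpha)t)$, the chain rule gives
\[
\partial_\alpha F=\partial_\alpha z-\frac{c'(\alpha)\gamma}{c(\alpha)}\,\partial_\gamma z,\qquad \partial_\beta F=\frac{1}{c(\alpha)t}\,\partial_\gamma z,
\]
hence
\[
(\partial_\alpha+i\partial_\beta)F=\partial_\alpha z+\frac{i-c'(\alpha)\gamma t}{c(\alpha)t}\,\partial_\gamma z.
\]
Substituting $\partial_\gamma z=\frac{ic(\alpha)t}{1+ic'(\alpha)\gamma t}\,\partial_\alpha z$ from \eqref{anaequation}, the coefficient of $\partial_\alpha z$ becomes
\[
1+\frac{i(i-c'(\alpha)\gamma t)}{1+ic'(\alpha)\gamma t}=\frac{(1+ic'(\alpha)\gamma t)+(i^2-ic'(\alpha)\gamma t)}{1+ic'(\alpha)\gamma t}=0,
\]
so $F$ is holomorphic in the interior of $D_A$.

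Since $c(\alpha)=\delta_c>0$ on $|\alpha|\leq\delta/32$, the interior of $D_A$ contains an open complex neighborhood of the origin, and $F|_{\mathbb{R}}=f$ there. Hence $F$ is the desired analytic extension of $f$ to $D_A$. The main delicacy is that the change of variables $\gamma=\beta/(c(\alpha)t)$ becomes singular at the boundary of the support of $c$, but this is harmless: at such points the slice of $D_A$ collapses to the single point $\beta=0$, so only continuity of $F$ is required there, while holomorphy is needed only on the interior, where $c(\alpha)>0$ and the computation above applies directly.
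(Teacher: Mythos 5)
Your proof is correct and follows essentially the same route as the paper: define the extension $F(\alpha+i\beta)=z(\alpha,\beta/(c(\alpha)t),t)$, invoke the uniqueness $z(\alpha,0,t)=f^c(\alpha,t)$, apply the chain rule, and substitute \eqref{anaequation} to cancel the Cauchy--Riemann quantity; the regularity from \eqref{zconditions} supplies the $C^1$ smoothness needed for holomorphy. Your closing remark on the degeneracy of the change of variables at the edge of $\supp c$ is a small clarification the paper leaves implicit, but it does not change the argument.
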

\begin{proof}
    From the uniqueness \eqref{uniqueness}, we have $z(\alpha,0,t)=f^c(\alpha,t)$. Then
    \begin{align*}
    f^c(\alpha+iy,t)=\begin{cases}&z(\alpha,\frac{y}{c(\alpha)t},t),\ c(\alpha)\neq 0,\\
    &z(\alpha,0,t),\ c(\alpha)=0,
    \end{cases}
    \end{align*}is a extension of $f^c(\alpha,t)$ on $D_A.$ 
Moreover, when $c(\alpha)\neq 0$, we have
\[
\frac{d}{d\alpha}(z(\alpha,\frac{y}{c(\alpha)t},t))=(\partial_{\alpha}z)(\alpha,\frac{y}{c(\alpha)t},t)-(\partial_{\gamma}z)(\alpha,\frac{y}{c(\alpha)t},t)(\frac{yc'(\alpha)}{c(\alpha)^2 t}),
\]
\[
\frac{d}{dy}(z(\alpha,\frac{y}{c(\alpha)t},t))=(\partial_{\gamma}z)(\alpha,\frac{y}{c(\alpha)t},t)(\frac{1}{c(\alpha)t}).
\] 
 Then we have
\begin{align*}
    &\frac{d}{d\alpha}f^c(\alpha+iy,t)+i\frac{d}{dy}f^c(\alpha+iy,t)\\
    &=(\partial_{\alpha}z)(\alpha,\frac{y}{c(\alpha)t},t)-(\partial_{\gamma}z)(\alpha,\frac{y}{c(\alpha)t},t)(\frac{yc'(\alpha)}{c(\alpha)^2 t})+i(\partial_{\gamma}z)(\alpha,\frac{y}{c(\alpha)t},t)(\frac{1}{c(\alpha)t}).
\end{align*}
Now let $\frac{y}{c(\alpha)t}=\gamma$. Then 
\begin{align*}
     &\frac{d}{d\alpha}f^c(\alpha+iy,t)+i\frac{d}{dy}f^c(\alpha+iy,t)=\partial_{\alpha}z(\alpha,\gamma,t)-(\frac{c'(\alpha)\gamma}{c(\alpha)}-\frac{i}{c(\alpha)t})\partial_{\gamma}z(\alpha,\gamma,t)\\
     &=\partial_{\alpha}z(\alpha,\gamma,t)-(\frac{ic'(\alpha)\gamma t +1}{ic(\alpha)t})\partial_{\gamma}z(\alpha,\gamma,t)\\
     &=0.
\end{align*}
Moreover, $z(\alpha,\gamma,t)\in C^1_{\gamma}([-1,1],H^2_{\alpha}(\mathbb{T}))$. Then $\partial_{\alpha}f^c$, $\partial_{\gamma}f^c$ are continuous. Therefore we have the analyticity of $f^c$ near 0. We also have 
$f(\alpha,t)=f^c(\alpha,t)$ when $|\alpha|\leq \delta$. Then we have the result.
\end{proof}
Let 
\[
A_0(h)(\alpha,\gamma,t)=(\frac{ic(\alpha)t}{1+ic'(\alpha)\gamma t}\partial_{\alpha}-\partial_{\gamma})h(\alpha,\gamma,t).
\]
Before we prove $A_0(z)=0$,
we introduce some general lemmas.
\begin{lemma}\label{switch}
If all the derivatives are well-defined and $\partial_{\alpha}\partial_{\gamma}h(\alpha,\gamma,t)=\partial_{\gamma}\partial_{\alpha}h(\alpha,\gamma,t)$,  we have
\begin{align*}
(\frac{ic(\alpha)t}{1+ic'(\alpha)\gamma t}\partial_{\alpha}-\partial_{\gamma})\frac{\partial_{\alpha}h(\alpha,\gamma)}{1+ic'(\alpha)\gamma t}=\frac{\partial_{\alpha}}{1+ic'(\alpha)\gamma t}(\frac{ic(\alpha)t}{1+ic'(\alpha)\gamma t}\partial_{\alpha}-\partial_{\gamma})h(\alpha,\gamma),
\end{align*}
\end{lemma}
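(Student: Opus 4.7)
The plan is to reduce the claimed commutation identity to a single algebraic cancellation in the coefficients by expanding both sides with the product rule and invoking the hypothesis that mixed partials of $h$ agree.

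First I would introduce the shorthands $\mu(\alpha,\gamma,t):=(1+ic'(\alpha)\gamma t)^{-1}$ and $\lambda(\alpha,\gamma,t):=ic(\alpha)t\,\mu(\alpha,\gamma,t)$, so that the operator in the statement becomes $A_0=\lambda\partial_\alpha-\partial_\gamma$ and the claim reads $A_0(\mu\,\partial_\alpha h)=\mu\,\partial_\alpha(A_0 h)$. Then I would expand both sides by Leibniz. The right side becomes $\mu(\partial_\alpha\lambda)(\partial_\alpha h)+\mu\lambda\,\partial_\alpha^2 h-\mu\,\partial_\alpha\partial_\gamma h$, and the left side becomes $\lambda(\partial_\alpha\mu)(\partial_\alpha h)+\lambda\mu\,\partial_\alpha^2 h-(\partial_\gamma\mu)(\partial_\alpha h)-\mu\,\partial_\gamma\partial_\alpha h$. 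The second-derivative terms $\lambda\mu\,\partial_\alpha^2 h$ match trivially, and the cross terms in $\partial_\alpha\partial_\gamma h$ match by the assumption $\partial_\alpha\partial_\gamma h=\partial_\gamma\partial_\alpha h$. What remains is a coefficient multiplying $\partial_\alpha h$, namely $\lambda\,\partial_\alpha\mu-\partial_\gamma\mu-\mu\,\partial_\alpha\lambda$, and the whole identity reduces to showing this scalar vanishes.

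The verification is a direct calculation: $\partial_\alpha\mu=-ic''(\alpha)\gamma t\,\mu^2$, $\partial_\gamma\mu=-ic'(\alpha)t\,\mu^2$, and $\partial_\alpha\lambda=ic'(\alpha)t\,\mu+ic(\alpha)t\,\partial_\alpha\mu=ic'(\alpha)t\,\mu+c(\alpha)c''(\alpha)\gamma t^2\mu^2$. Substituting, $\lambda\,\partial_\alpha\mu=c(\alpha)c''(\alpha)\gamma t^2\mu^3$ and $\mu\,\partial_\alpha\lambda=ic'(\alpha)t\,\mu^2+c(\alpha)c''(\alpha)\gamma t^2\mu^3$, so the $cc''$ terms cancel between $\lambda\partial_\alpha\mu$ and $\mu\partial_\alpha\lambda$, while the $c'$ terms cancel between $\partial_\gamma\mu$ and $\mu\partial_\alpha\lambda$, giving zero.

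There is no real obstacle here beyond careful bookkeeping of signs in four product-rule expansions. The structural reason the identity must hold is that $A_0$ is, up to a factor, the pullback of the Cauchy-Riemann operator $\partial_\alpha+i\partial_y$ along the parametrization $(\alpha,\gamma)\mapsto \alpha+ic(\alpha)\gamma t$, and $\mu$ is the Jacobian factor that converts $\partial_\alpha$ in the $(\alpha,\gamma)$-coordinates to $\partial_\alpha$ evaluated at the image point; the algebraic cancellation above is simply the chain-rule compatibility in coordinates. This is exactly what makes the lemma useful for the subsequent argument that $A_0(z)=0$.
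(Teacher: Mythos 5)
Your proof is correct and follows essentially the same route as the paper: expand both sides by the product rule, cancel the $\partial_\alpha^2 h$ and mixed-partial terms, and verify the remaining scalar coefficient of $\partial_\alpha h$ vanishes by direct computation of $\partial_\alpha\mu$, $\partial_\gamma\mu$, $\partial_\alpha\lambda$. The only cosmetic difference is your use of the shorthands $\mu$ and $\lambda$, which tidies the bookkeeping; your closing remark about $A_0$ as a pullback of the Cauchy--Riemann operator is a nice structural observation not spelled out in the paper but is not needed for the proof itself.
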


\begin{proof}
First, for the right hand side, we have
\begin{align*}
    (RHS)=\frac{\frac{ic(\alpha)t}{1+ic'(\alpha)\gamma t}\partial_{\alpha}^2h(\alpha,\gamma)}{1+ic'(\alpha)\gamma t}+\frac{ic(\alpha)t}{1+ic'(\alpha)\gamma t}(\frac{-ic''(\alpha)\gamma t}{(1+ic'(\alpha)\gamma t )^2}\partial_{\alpha}h(\alpha,\gamma))-\frac{\partial_{\alpha}\partial_{\gamma}h(\alpha,\gamma)}{1+ic'(\alpha)\gamma t}+\frac{ic'(\alpha)t\partial_{\alpha}h(\alpha,\gamma)}{(1+ic'(\alpha)\gamma t)^2}.
\end{align*}
Also
\begin{align*}
    (LHS)=[\frac{ic'(\alpha)t}{(1+ic'(\alpha)\gamma t)^2}-\frac{ic(\alpha)tic''(\alpha)\gamma t}{(1+ic'(\alpha)\gamma t)^3}]\partial_{\alpha}h(\alpha,\gamma)+\frac{ic(\alpha)t}{(1+ic'(\alpha)\gamma t)^2}\partial_{\alpha}^2h(\alpha,\gamma)-\frac{1}{1+ic'(\alpha)\gamma t}\partial_{\alpha}\partial_{\gamma}h(\alpha,\gamma).
\end{align*}
From the two equalities above, we have the result.
\end{proof}
 \begin{lemma}\label{generalalemma}
If all the derivatives are well-defined and we have
\[
\frac{d}{dt}h= \frac{ic(\alpha)\gamma}{1+ic'(\alpha)\gamma t} \partial_{\alpha}h+\tilde{T}(h),
\]
and 
$\frac{d}{d\alpha}\frac{d}{dt}h=\frac{d}{dt}\frac{d}{d\alpha}h$, $ \frac{d}{dt}\frac{d}{d\gamma}h=\frac{d}{d\gamma}\frac{d}{dt}h$, $\frac{d}{d\alpha}\frac{d}{d\gamma}h=\frac{d}{d\gamma}\frac{d}{d\alpha}h$,
then we have
\[
\frac{d}{dt}A_0(h)= \frac{ic(\alpha)\gamma}{1+ic'(\alpha)\gamma t} \partial_{\alpha}A_0(h)+A_0(\tilde{T}(h)).\]
\end{lemma}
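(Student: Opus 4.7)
The plan is to apply $A_0$ to both sides of the evolution equation for $h$ and carefully track the commutators that arise from the $\alpha$- and $\gamma$-dependence of the coefficients. Set $m(\alpha,\gamma,t)=\frac{ic(\alpha)t}{1+ic'(\alpha)\gamma t}$ and $n(\alpha,\gamma,t)=\frac{ic(\alpha)\gamma}{1+ic'(\alpha)\gamma t}$, so that $A_0 = m\partial_\alpha - \partial_\gamma$ and the evolution reads $\frac{d}{dt}h = n\,\partial_\alpha h + \tilde{T}(h)$. Applying $A_0$ to this equation directly yields
\[
A_0\!\left(\tfrac{d}{dt}h\right) = A_0(n\partial_\alpha h) + A_0(\tilde{T}(h)).
\]
The goal is to rewrite the left side as $\frac{d}{dt}A_0(h)$ and the first term on the right as $n\partial_\alpha A_0(h)$, matching the claimed identity.

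Next I would compute the two commutators $[\,A_0, \tfrac{d}{dt}\,]$ and $[\,A_0, n\partial_\alpha\,]$. Using the hypothesis that the mixed partials $\partial_\alpha\partial_\gamma h$, $\tfrac{d}{dt}\partial_\alpha h$, $\tfrac{d}{dt}\partial_\gamma h$ all commute with each other as stated, a direct expansion gives
\[
\tfrac{d}{dt}A_0(h) - A_0\!\left(\tfrac{d}{dt}h\right) = (\partial_t m)\,\partial_\alpha h,
\]
\[
A_0(n\partial_\alpha h) - n\partial_\alpha A_0(h) = \bigl(m\partial_\alpha n - n\partial_\alpha m - \partial_\gamma n\bigr)\partial_\alpha h.
\]
Both commutators produce only a first-order term in $h$ with an explicit coefficient built from $c(\alpha)$, $t$, $\gamma$.

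The crux is then the algebraic identity
\[
m\partial_\alpha n - n\partial_\alpha m = 0, \qquad \partial_\gamma n = \partial_t m.
\]
The first holds because $n=\tfrac{\gamma}{t}m$, so $\partial_\alpha n = \tfrac{\gamma}{t}\partial_\alpha m$ and the two products cancel. The second is a one-line quotient rule check:
\[
\partial_t m = \frac{ic(\alpha)}{(1+ic'(\alpha)\gamma t)^2} = \partial_\gamma n.
\]
Therefore $\bigl(m\partial_\alpha n - n\partial_\alpha m - \partial_\gamma n\bigr) = -\partial_t m$, so the two commutators are exactly opposite. Substituting into the identity $A_0(\frac{d}{dt}h)=A_0(n\partial_\alpha h)+A_0(\tilde T(h))$ makes the $(\partial_t m)\partial_\alpha h$ terms cancel, leaving precisely
\[
\tfrac{d}{dt}A_0(h) = n\,\partial_\alpha A_0(h) + A_0(\tilde{T}(h)),
\]
which is the claim.

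There is no real analytic obstacle here; this is a bookkeeping lemma, and the only non-trivial step is spotting the two algebraic identities between $m$ and $n$ that make the commutators cancel. The hypothesis that all mixed derivatives of $h$ commute is used precisely to ensure that the commutators in question only produce coefficient terms of the form $(\text{scalar})\cdot\partial_\alpha h$ with no residual second-order pieces. Everything else is elementary quotient-rule differentiation of $m$ and $n$.
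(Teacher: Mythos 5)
Your proof is correct and is essentially the paper's argument, reorganized into commutator form: you compute $[\tfrac{d}{dt},A_0]$ and $[A_0,\,n\partial_\alpha]$ separately and show they cancel, using exactly the two algebraic identities the paper isolates (in its $\mathrm{Term}_1$, $\mathrm{Term}_2$ bookkeeping), namely $\partial_t m = \partial_\gamma n$ and $m\,\partial_\alpha n = n\,\partial_\alpha m$. The commutator framing is a slightly cleaner presentation of the same direct expansion, and you correctly invoke all three mixed-partial hypotheses where they are needed.
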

\begin{proof}
First,
\begin{align*}
    &\frac{d}{dt}A_0(h)(\alpha,\gamma,t)\\
    &=\frac{d}{dt}(\frac{ic(\alpha )t}{1+ic'(\alpha)\gamma t})\partial_{\alpha}h+\frac{ic(\alpha)t}{1+ic'(\alpha)\gamma t}\partial_{\alpha}\frac{d}{dt}h-\partial_{\gamma}\frac{d}{dt}h\\
    &=\frac{d}{dt}(\frac{ic(\alpha )t}{1+ic'(\alpha)\gamma t})\partial_{\alpha}h+(\frac{ic(\alpha)t}{1+ic'(\alpha)\gamma t}\partial_{\alpha}-\partial_{\gamma})(\frac{ic(\alpha)\gamma}{1+ic'(\alpha)\gamma t}\partial_{\alpha}h)+A_0(\tilde{T}(h))\\
    &=\frac{d}{dt}(\frac{ic(\alpha )t}{1+ic'(\alpha)\gamma t})\partial_{\alpha}h+\frac{ic(\alpha )t}{1+ic'(\alpha)\gamma t}\partial_{\alpha}(\frac{ic(\alpha )\gamma}{1+ic'(\alpha)\gamma t})\partial_{\alpha}h+\frac{ic(\alpha )t}{1+ic'(\alpha)\gamma t}\frac{ic(\alpha )\gamma}{1+ic'(\alpha)\gamma t}\partial_{\alpha}^2 h\\
    &-\partial_{\gamma}(\frac{ic(\alpha )\gamma}{1+ic'(\alpha)\gamma t})\partial_{\alpha}h-(\frac{ic(\alpha )\gamma}{1+ic'(\alpha)\gamma t}\partial_{\gamma}\partial_{\alpha}h)+A_0(\tilde{T}(h))\\
    &=(\underbrace{\frac{ic(\alpha )t}{1+ic'(\alpha)\gamma t}\partial_{\alpha}(\frac{ic(\alpha )\gamma}{1+ic'(\alpha)\gamma t})}_{Term_1}+\underbrace{\frac{d}{dt}(\frac{ic(\alpha )t}{1+ic'(\alpha)\gamma t})-\partial_{\gamma}(\frac{ic(\alpha )\gamma}{1+ic'(\alpha)\gamma t})}_{Term_2})\partial_{\alpha}h\\
    &+\underbrace{\frac{ic(\alpha )\gamma}{1+ic'(\alpha)\gamma t}(\frac{ic(\alpha )t}{1+ic'(\alpha)\gamma t}\partial_{\alpha}-\partial_{\gamma})\partial_{\alpha}h}_{Term_{3}}+A_0(\tilde{T}(h)).
\end{align*}
Moreover, we have
\begin{align*}
 &\underbrace{\frac{d}{dt}(\frac{ic(\alpha )t}{1+ic'(\alpha)\gamma t})-\partial_{\gamma}(\frac{ic(\alpha )\gamma}{1+ic'(\alpha)\gamma t})}_{Term_2}=0,
\end{align*}
\begin{align*}
  \underbrace{\frac{ic(\alpha )t}{1+ic'(\alpha)\gamma t}\partial_{\alpha}(\frac{ic(\alpha )\gamma}{1+ic'(\alpha)\gamma t})}_{Term_1}= \frac{ic(\alpha )\gamma}{1+ic'(\alpha)\gamma t}\partial_{\alpha}(\frac{ic(\alpha )t}{1+ic'(\alpha)\gamma t}), 
\end{align*}
and 
\begin{align*}
   &\underbrace{\frac{ic(\alpha )\gamma}{1+ic'(\alpha)\gamma t}(\frac{ic(\alpha )t}{1+ic'(\alpha)\gamma t}\partial_{\alpha}-\partial_{\gamma})\partial_{\alpha}h}_{Term_3}\\
   &= \frac{ic(\alpha )\gamma}{1+ic'(\alpha)\gamma t}\partial_{\alpha}((\frac{ic(\alpha )t}{1+ic'(\alpha)\gamma t}\partial_{\alpha}-\partial_{\gamma})h)-\frac{ic(\alpha )\gamma}{1+ic'(\alpha)\gamma t}\partial_{\alpha}(\frac{ic(\alpha )t}{1+ic'(\alpha)\gamma t})\partial_{\alpha}h.
\end{align*}
Therefore 
\[
\frac{d}{dt}A_0(h)(\alpha,\gamma,t)= \frac{ic(\alpha )\gamma}{1+ic'(\alpha)\gamma t}\partial_{\alpha}((\frac{ic(\alpha )t}{1+ic'(\alpha)\gamma t}\partial_{\alpha}-\partial_{\gamma})h)+A_0(\tilde{T}(h))=\frac{ic(\alpha )\gamma}{1+ic'(\alpha)\gamma t}\partial_{\alpha}A_0(h)+A_0(\tilde{T}(h)).
\]
\end{proof}
\begin{lemma}\label{forM1}
Let $\tilde{K}$ be meromorphic. $\partial_{\alpha}X(\alpha,\gamma)$, $\partial_{\gamma}X(\alpha,\gamma)$ are well-defined and in $C_{\alpha}^{0}[-\pi,\pi]$, $\partial_{\alpha}h(\alpha,\gamma)$ and $\partial_{\gamma}h(\alpha,\gamma)$ are well-defined vector functions with components in $C_{\alpha}^{0}[-\pi,\pi]$. If for fixed $\alpha$, there is no singular point in the integrals below and $c(\pi)=c(-\pi)=0$, $c(\alpha)\in W^{2,\infty}$, then we have
\begin{align*}
    &A_0(\int_{-\pi}^{\pi}\tilde{K}(h(\alpha,\gamma)-h(\beta,\gamma))X(\beta,\gamma)(1+ic'(\beta)\gamma t)d\beta)=
    \\&\int_{-\pi}^{\pi}\nabla \tilde{K}(h(\alpha,\gamma)-h(\beta,\gamma))\cdot(A_0(h)(\alpha,\gamma)-A_0(h)(\beta,\gamma))X(\beta,\gamma)(1+ic'(\beta)\gamma t)d\beta\\
    &+\int_{-\pi}^{\pi}\tilde{K}(h(\alpha,\gamma)-h(\beta,\gamma))A_0(X)(\beta,\gamma)(1+ic'(\beta)\gamma t)d\beta\\
    &=D_{h}(\int_{-\pi}^{\pi}\tilde{K}(h(\alpha,\gamma)-h(\beta,\gamma))X(\beta,\gamma)(1+ic'(\beta)\gamma t)d\beta)[A_0(h)]\\
    &+\int_{-\pi}^{\pi}\tilde{K}(h(\alpha,\gamma)-h(\beta,\gamma))A_0(X)(\beta,\gamma)(1+ic'(\beta)\gamma t)d\beta.
\end{align*}
Here $D_h$ is the Gateaux derivative.
\end{lemma}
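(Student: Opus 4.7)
The plan is a direct computation by differentiating under the integral sign, followed by a single integration by parts in $\beta$ to reshape the result.

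First I would compute $\partial_\alpha$ and $\partial_\gamma$ of the integral separately. Since $\tilde{K}$ is meromorphic and for fixed $\alpha$ there are no singularities on $[-\pi,\pi]$, differentiation under the integral is justified, and the chain rule gives
\begin{align*}
\partial_\alpha\!\!\int_{-\pi}^{\pi}\!\!\tilde{K}(h(\alpha)-h(\beta))X(\beta)(1+ic'(\beta)\gamma t)\,d\beta
&=\!\int_{-\pi}^{\pi}\!\!\nabla\tilde{K}(h(\alpha)-h(\beta))\cdot\partial_\alpha h(\alpha)\,X(\beta)(1+ic'(\beta)\gamma t)\,d\beta,\\
\partial_\gamma\!\!\int_{-\pi}^{\pi}\!\!\tilde{K}(h(\alpha)-h(\beta))X(\beta)(1+ic'(\beta)\gamma t)\,d\beta
&=\!\int_{-\pi}^{\pi}\!\!\nabla\tilde{K}(h(\alpha)-h(\beta))\cdot(\partial_\gamma h(\alpha)-\partial_\gamma h(\beta))X(\beta)(1+ic'(\beta)\gamma t)\,d\beta\\
&\quad+\!\int_{-\pi}^{\pi}\!\!\tilde{K}(h(\alpha)-h(\beta))\partial_\gamma X(\beta)(1+ic'(\beta)\gamma t)\,d\beta\\
&\quad+\!\int_{-\pi}^{\pi}\!\!\tilde{K}(h(\alpha)-h(\beta))X(\beta)\,ic'(\beta)t\,d\beta.
\end{align*}

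Next I would assemble $A_0$ of the integral using its definition $A_0=\frac{ic(\alpha)t}{1+ic'(\alpha)\gamma t}\partial_\alpha-\partial_\gamma$. The $\alpha$-side terms immediately combine into $\nabla\tilde{K}\cdot A_0(h)(\alpha)$ at the outer variable. On the $\beta$-side, $\partial_\gamma h(\beta)$ is already present and $\partial_\gamma X(\beta)$ appears with the correct sign; what is missing are the two terms $-\frac{ic(\beta)t}{1+ic'(\beta)\gamma t}\partial_\beta h(\beta)$ (to complete $-A_0(h)(\beta)$ inside the first desired integrand) and $+\frac{ic(\beta)t}{1+ic'(\beta)\gamma t}\partial_\beta X(\beta)$ (to complete $A_0(X)(\beta)$ inside the second). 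I add and subtract exactly these two terms.

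The added contributions are
\begin{equation*}
\int_{-\pi}^{\pi}\bigl[-\nabla\tilde{K}(h(\alpha)-h(\beta))\cdot\partial_\beta h(\beta)\,X(\beta)+\tilde{K}(h(\alpha)-h(\beta))\partial_\beta X(\beta)\bigr]\,ic(\beta)t\,d\beta,
\end{equation*}
which, noting that $\partial_\beta[\tilde{K}(h(\alpha)-h(\beta))]=-\nabla\tilde{K}(h(\alpha)-h(\beta))\cdot\partial_\beta h(\beta)$, is precisely
\begin{equation*}
\int_{-\pi}^{\pi}\partial_\beta\bigl[\tilde{K}(h(\alpha)-h(\beta))X(\beta)\,ic(\beta)t\bigr]\,d\beta-\int_{-\pi}^{\pi}\tilde{K}(h(\alpha)-h(\beta))X(\beta)\,ic'(\beta)t\,d\beta.
\end{equation*}
The hypothesis $c(\pi)=c(-\pi)=0$ kills the first integral by the fundamental theorem of calculus, while the second cancels exactly the spurious $ic'(\beta)t$ term generated by $\partial_\gamma$ in the first step. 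Collecting everything yields the claimed identity. The regularity $c\in W^{2,\infty}$ plus the stated continuity of $\partial_\alpha h,\partial_\gamma h,\partial_\alpha X,\partial_\gamma X$ makes both the differentiation under the integral and the boundary evaluation legitimate; the rewriting as a Gateaux derivative in the final line of the statement is just the definition of $D_h$ applied to the integral functional.
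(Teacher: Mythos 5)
Your proof is correct, and the core mechanism is the same as the paper's: complete the $\beta$-side pieces to $A_0(h)(\beta)$ and $A_0(X)(\beta)$, and recognize that the leftover (together with the spurious $ic'(\beta)t$ term from $\partial_\gamma$ acting on $1+ic'(\beta)\gamma t$) is a total $\beta$-derivative $\partial_\beta\bigl[\tilde{K}(h(\alpha)-h(\beta))X(\beta)\,ic(\beta)t\bigr]$, which vanishes on $[-\pi,\pi]$ by the fundamental theorem of calculus and $c(\pm\pi)=0$.

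The paper reaches the same cancellation by a slightly more roundabout route: it first substitutes $\beta\mapsto\alpha-\beta$ so the integral has $\alpha$-dependent limits, applies $A_0$, which then produces an explicit boundary term ($Term_1$) from differentiating those limits, and finally groups the three remaining error pieces ($Term_{2,2}+Term_{3,2}+Term_4$) into a $\frac{d}{d\beta}$ of $\tilde{K}X\bigl(ic(\alpha-\beta)t-\frac{ic(\alpha)t}{1+ic'(\alpha)\gamma t}(1+ic'(\alpha-\beta)\gamma t)\bigr)$, whose boundary evaluation cancels $Term_1$ once $c(\pm\pi)=0$ is used. Your version avoids the change of variables, so there is no boundary term coming from moving limits; instead everything is absorbed into a single explicit integration by parts in $\beta$. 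The two are clearly equivalent, and your presentation is somewhat shorter and cleaner to track. One small bookkeeping remark: when you say you "add and subtract" the missing pieces, what actually happens is that the pieces needed to complete $-A_0(h)(\beta)$ and $A_0(X)(\beta)$ leave a compensating remainder; your displayed "added contributions" is precisely (the negative of) that remainder, and your subsequent identity shows it equals $-\int\tilde K X\,ic'(\beta)t\,d\beta$, which cancels the $\partial_\gamma$-generated term. The conclusion follows, so the argument is sound.
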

\begin{proof}
We have
\begin{align*}
 &A_0(\int_{\alpha-\pi}^{\alpha+\pi}\tilde{K}(h(\alpha,\gamma)-h(\alpha-\beta,\gamma))X(\alpha-\beta,\gamma)(1+ic'(\alpha-\beta)\gamma t)d\beta)=
    \\
    &\underbrace{\frac{ic(\alpha)t}{1+ic'(\alpha)\gamma t}(\tilde{K}(h(\alpha,\gamma)-h(-\pi,\gamma))X(-\pi,\gamma)(1+ic'(-\pi)\gamma t)-\tilde{K}(h(\alpha,\gamma)-h(\pi,\gamma))X(\pi,\gamma)(1+ic'(\pi)\gamma t))}_{Term_1}\\
    &+\underbrace{\int_{\alpha-\pi}^{\alpha+\pi}\nabla \tilde{K}(h(\alpha,\gamma)-h(\alpha-\beta,\gamma))\cdot((\frac{ic(\alpha)t}{1+ic'(\alpha)\gamma t}\partial_{\alpha}-\partial_{\gamma})h(\alpha,\gamma)-(\frac{ic(\alpha)t}{1+ic'(\alpha)\gamma t}\partial_{\alpha}-\partial_{\gamma})h(\alpha-\beta,\gamma,t))}_{Term_2}\\
    &\underbrace{X(\alpha-\beta,\gamma)(1+ic'(\alpha-\beta)\gamma t)d\beta}_{Term_2}\\
    &+\underbrace{\int_{\alpha-\pi}^{\alpha+\pi}\tilde{K}(h(\alpha,\gamma)-h(\alpha-\beta,\gamma))(\frac{ic(\alpha)t}{1+ic'(\alpha)\gamma t}\partial_{\alpha}-\partial_{\gamma})X(\alpha-\beta,\gamma)(1+ic'(\alpha-\beta)\gamma t)d\beta}_{Term_3}\\
    &+\underbrace{\int_{\alpha-\pi}^{\alpha+\pi}\tilde{K}(h(\alpha,\gamma)-h(\alpha-\beta,\gamma))X(\alpha-\beta,\gamma)(\frac{ic(\alpha)t}{1+ic'(\alpha)\gamma t}ic''(\alpha-\beta)\gamma t-ic'(\alpha-\beta) t)d\beta}_{Term_4}\\
    &=Term 1 +\\     
    &+\underbrace{\int_{\alpha-\pi}^{\alpha+\pi}\nabla \tilde{K}(h(\alpha,\gamma)-h(\alpha-\beta,\gamma))\cdot((\frac{ic(\alpha)t}{1+ic'(\alpha)\gamma t}\partial_{\alpha}-\partial_{\gamma})h(\alpha,\gamma)-(\frac{ic(\alpha-\beta)t}{1+ic'(\alpha-\beta)\gamma t}\partial_{\alpha}-\partial_{\gamma})h(\alpha-\beta,\gamma,t))}_{Term_{2,1}}\\
    &\underbrace{X(\alpha-\beta,\gamma)(1+ic'(\alpha-\beta)\gamma t)d\beta}_{Term_{2,1}}\\
    &+\underbrace{\int_{\alpha-\pi}^{\alpha+\pi}\nabla \tilde{K}(h(\alpha,\gamma)-h(\alpha-\beta,\gamma))\cdot((\frac{ic(\alpha-\beta)t}{1+ic'(\alpha-\beta)\gamma t}\partial_{\alpha}-\frac{ic(\alpha)t}{1+ic'(\alpha)\gamma t}\partial_{\alpha})h(\alpha-\beta,\gamma,t))}_{Term_{2,2}}\\
    &\underbrace{X(\alpha-\beta,\gamma)(1+ic'(\alpha-\beta)\gamma t)d\beta}_{Term_{2,2}}\\
    &+\underbrace{\int_{\alpha-\pi}^{\alpha+\pi}\tilde{K}(h(\alpha,\gamma)-h(\alpha-\beta,\gamma))(\frac{ic(\alpha-\beta)t}{1+ic'(\alpha-\beta)\gamma t}\partial_{\alpha}-\partial_{\gamma})X(\alpha-\beta,\gamma)(1+ic'(\alpha-\beta)\gamma t)d\beta}_{Term_{3,1}}\\
    &+\underbrace{\int_{\alpha-\pi}^{\alpha+\pi}\tilde{K}(h(\alpha,\gamma)-h(\alpha-\beta,\gamma))(\frac{ic(\alpha)t}{1+ic'(\alpha)\gamma t}\partial_{\alpha}-\frac{ic(\alpha-\beta)t}{1+ic'(\alpha-\beta)\gamma t}\partial_{\alpha})X(\alpha-\beta,\gamma)(1+ic'(\alpha-\beta)\gamma t)d\beta}_{Term_{3,2}}\\
     &+\underbrace{\int_{\alpha-\pi}^{\alpha+\pi}\tilde{K}(h(\alpha,\gamma)-h(\alpha-\beta,\gamma))X(\alpha-\beta,\gamma)(\frac{ic(\alpha)t}{1+ic'(\alpha)\gamma t}ic''(\alpha-\beta)\gamma t-ic'(\alpha-\beta) t)d\beta}_{Term_4}\\
    &=Term_1+Term_2+Term_{2,2}+Term_{3,1}+Term_{3,2}+Term_4.
\end{align*}
Moreover,
\begin{align*}
    &Term_{2,2}+Term_{3,2}+Term_4=\\
    &=\int_{\alpha-\pi}^{\alpha+\pi}\frac{d}{d\beta}(\tilde{K}(h(\alpha,\gamma)-h(\alpha-\beta,\gamma))X(\alpha-\beta,\gamma)(ic(\alpha-\beta)t-\frac{ic(\alpha)t}{1+ic'(\alpha)\gamma t}(1+ic'(\alpha-\beta)\gamma t)))d\beta\\
    &=\tilde{K}(h(\alpha,\gamma)-h(-\pi,\gamma))X(-\pi,\gamma)(ic(-\pi)t-\frac{ic(\alpha)t}{1+ic'(\alpha)\gamma t}(1+ic'(-\pi)\gamma t))\\
    &-\tilde{K}(h(\alpha,\gamma)-h(\pi,\gamma))X(\pi,\gamma)(ic(\pi)t-\frac{ic(\alpha)t}{1+ic'(\alpha)\gamma t}(1+ic'(\pi)\gamma t)).
\end{align*}
We use the condition that $c(-\pi)=c(\pi)=0$ and we could get the $Term_1+Term_{2,2}+Term_{3,2}+Term_4=0$. Then we have the result.
\end{proof}
Now we use lemma \ref{switch}, \ref{generalalemma}, \ref{forM1}, to show the result. 
From \eqref{equationz} and \eqref{kequationnew}, we have
\begin{align*}
&\frac{d z_{\mu}(\alpha, \gamma,t)}{dt}=\frac{ic(\alpha)\gamma}{1+ic^{'}(\alpha)\gamma t}\partial_{\alpha}z_{\mu}(\alpha,\gamma,t)+\\\nonumber
&+\lambda(\alpha_{\gamma}^{t})\int_{-\pi}^{\pi}K(z(\alpha,\gamma,t)-z(\beta,\gamma,t),\tilde{f}(\alpha_{\gamma}^t,t)-\tilde{f}(\beta_{\gamma}^t,t)) (\frac{\partial_{\alpha}z_{\mu}(\alpha,\gamma,t)}{1+ic'(\alpha)\gamma t}-\frac{\partial_{\beta}z_{\mu}(\beta,\gamma,t)}{1+ic'(\beta)\gamma t})(1+ic'(\beta)\gamma t)d\beta\\\nonumber
&+\lambda(\alpha_{\gamma}^t)\int_{-\pi}^{\pi}K(z(\alpha,\gamma,t)-z(\beta,\gamma,t),\tilde{f}(\alpha_{\gamma}^t,t)-\tilde{f}(\beta_{\gamma}^t,t))((\partial_{\alpha}\tilde{f_{\mu}})(\alpha_{\gamma}^t,t)-(\partial_{\beta}\tilde{f_{\mu}})(\beta_{\gamma}^t,t))(1+ic'(\beta)\gamma t)d\beta.
\end{align*}
Since we have $A_0(\lambda(\alpha_{\gamma}^{t}))=0$, from lemma \ref{generalalemma}, we get
\begin{align*}
     &\frac{dA_0(z_{\mu})}{dt}=\frac{ic(\alpha)\gamma}{1+ic^{'}(\alpha)\gamma t}\partial_{\alpha}A_0(z_{\mu})(\alpha,\gamma,t)\\
   &+\lambda(\alpha_{\gamma}^{t})A_0(\int_{-\pi}^{\pi}K(z(\alpha,\gamma,t)-z(\beta,\gamma,t),\tilde{f}(\alpha_{\gamma}^t,t)-\tilde{f}(\beta_{\gamma}^t,t))\\
   &\nonumber(\frac{\partial_{\alpha}z_{\mu}(\alpha,\gamma,t)}{1+ic'(\alpha)\gamma t}-\frac{\partial_{\beta}z_{\mu}(\beta,\gamma,t)}{1+ic'(\beta)\gamma t}+(\partial_{\alpha}\tilde{f_{\mu}})(\alpha_{\gamma}^t,t)-(\partial_{\beta}\tilde{f_{\mu}})(\beta_{\gamma}^t,t))(1+ic'(\beta)\gamma t)d\beta).
\end{align*}

 Let 

\[h(\alpha,\gamma)=(z(\alpha,\gamma,t),\tilde{f}(\alpha_{\gamma}^t,t), \frac{\partial_{\alpha}z_{\mu}(\alpha,\gamma,t)}{1+ic'(\alpha)\gamma t}+(\partial_{\alpha}\tilde{f_{\mu}})(\alpha_{\gamma}^t,t)),
\]
\[
\tilde{K}(h(\alpha,\gamma)-h(\beta,\gamma))=K(h_1(\alpha,\gamma)-h_1(\beta,\gamma),h_2(\alpha,\gamma)-h_2(\beta,\gamma))(h_3(\alpha,\gamma)-h_3(\beta,\gamma)),
\] 
and 
\[
X(\alpha,\gamma)=1.
\]
then by lemma \ref{forM1} and lemma \ref{switch} and $A_0((\partial_{\alpha}\tilde{f_{\mu}})(\alpha_{\gamma}^t,t))=0$, and $A_0(\tilde{f_{\mu}}(\alpha_{\gamma}^t,t))=0$, we have
\begin{align*}
    &\frac{dA_0(z_{\mu})}{dt}=\frac{ic(\alpha)\gamma}{1+ic^{'}(\alpha)\gamma t}\partial_{\alpha}A_0(z_{\mu})(\alpha,\gamma,t)\\
    &+\lambda(\alpha_{\gamma}^{t})\int_{-\pi}^{\pi}K(z(\alpha,\gamma,t)-z(\beta,\gamma,t),\tilde{f}(\alpha_{\gamma}^t,t)-\tilde{f}(\beta_{\gamma}^t,t)) (\frac{\partial_{\alpha}A_0(z_{\mu})(\alpha,\gamma,t)}{1+ic'(\alpha)\gamma t}-\frac{\partial_{\beta}A_0(z_{\mu})(\beta,\gamma,t)}{1+ic'(\beta)\gamma t})\\
    &\cdot(1+ic'(\beta)\gamma t)d\beta\\
    &+\lambda(\alpha_{\gamma}^{t})\int_{-\pi}^{\pi}\nabla_1 K(z(\alpha,\gamma,t)-z(\beta,\gamma,t),\tilde{f}(\alpha_{\gamma}^t,t)-\tilde{f}(\beta_{\gamma}^t,t))\cdot(A_0(z)(\alpha,\gamma,t)-A_0(z)(\beta,\gamma,t))\\
    &\cdot(\frac{\partial_{\alpha}z_{\mu}(\alpha,\gamma,t)}{1+ic'(\alpha)\gamma t}-\frac{\partial_{\beta}z_{\mu}(\beta,\gamma,t)}{1+ic'(\beta)\gamma t}+(\partial_{\alpha}\tilde{f_{\mu}})(\alpha_{\gamma}^t,t)-(\partial_{\beta}\tilde{f_{\mu}})(\beta_{\gamma}^t,t))(1+ic'(\beta)\gamma t)d\beta\\
    &=Term_1+Term_2+Term_3.
\end{align*}
Since 
\[
\|\nabla_1 K(z(\alpha,\gamma,t)-z(\beta,\gamma,t)+\tilde{f}(\alpha_{\gamma}^t,t)-\tilde{f}(\beta_{\gamma}^t,t))(\alpha-\beta)^2\|_{C^1([-2\delta,2\delta]\times[-\pi,\pi])}\lesssim1,
\]
\[
\|\frac{\partial_{\alpha}z_{\mu}(\alpha,\gamma,t)}{1+ic'(\alpha)\gamma t}+(\partial_{\alpha}\tilde{f_{\mu}})(\alpha_{\gamma}^t,t)\|_{C^2([-2\delta,2\delta]\times[-\pi,\pi])}\lesssim 1,
\]
 by lemma \ref{goodterm2}, we have
\begin{align*}
    \|Term_3\|_{L^2(\mathbb{T})}\lesssim\|A_0(z)\|_{L^2(\mathbb{T})}.
\end{align*}
Then by corollary \ref{maintermcorollary}, we have
\begin{align*}
    \frac{d}{dt}\|A_0(z)\|_{L^2(\mathbb{T})}^2\lesssim\|A_0(z)\|_{L^2(\mathbb{T})}^2.
\end{align*}
Moreover when $t=0$, $A_0(z)=-\partial_{\gamma}z(\alpha,\gamma,0)=-\partial_{\gamma}f(\alpha,0)=0$.
Therefore $A_0(z)=0$, when $t\leq t_3$.
\section{Using the energy estimate to show the analyticity}\label{analyticitystationary}
Following the similar idea as previous sections, we introduce a way to study the analyticity of the solution to some differential equations, which is to our best knowledge a new method.
\begin{theorem}\label{analyticity0}
Let $T(f)$ be an operator satisfying the following conditions. We assume there exists $\epsilon>0$, $k\geq  1$,  $f_0\in H^{k}(\mathbb{T})$, such that when $\|f-f_0\|_{H^{k}}\lesssim \epsilon$,
\begin{itemize}
    \item [(a)](Boundedness) $T(f):H^{k}(\mathbb{T})\to H^{k}(\mathbb{T})$ with $\|T(f)\|_{H^{k}(\mathbb{T})}\lesssim 1$,
    \item [(b)](Existence and boundedness of the Fréchet derivative )$\|D_{f}(T(f))[h]\|_{H^{k}(\mathbb{T})}\lesssim \|h\|_{H^k(\mathbb{T})},$
    \item  [(c)]($L^2$ boundedness of the Fréchet derivative) $\|D_{f}(T(f))[h]\|_{L^{2}(\mathbb{T})}\lesssim \|h\|_{L^2(\mathbb{T})}$,
     \item[(d)]$\frac{d}{dx}T(f)=D_{f}(T(f))[\frac{df}{dx}]$,
     \item[(e)] $iD_f(T(f))[h]=D_f(T(f))[ih]$.
\end{itemize}
Here $\mathbb{T}$ is the torus of length $2\pi$. If $f_0(x)$ also satisfies the equation
\begin{equation}\label{original equation1}
    f_0^{'}(x)=T(f_0),
\end{equation}
$f_{0}(x)$ must be real analytic.
\end{theorem}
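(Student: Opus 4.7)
The plan is to mirror the strategy of Sections \ref{extenedequationfar}--\ref{analyticityfar} in a greatly simplified, purely stationary setting: instead of evolving a Muskat interface in time, I will construct a candidate analytic extension of $f_0$ directly, as the solution of an auxiliary ODE in the ``height'' parameter $\gamma$, and then show that this candidate satisfies the Cauchy--Riemann equation.

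Fix a small $t>0$ to be chosen later. For $\gamma\in[-1,1]$, let $z(\cdot,\gamma)\in H^{k}(\mathbb{T})$ be the solution of the Banach-space ODE
\[
\partial_{\gamma}z(\cdot,\gamma)=it\,T(z(\cdot,\gamma)),\qquad z(\cdot,0)=f_{0}.
\]
Assumption~(a) bounds the right-hand side and~(b) gives Lipschitz continuity of $T$ on a neighborhood of $f_{0}$ in $H^{k}(\mathbb{T})$, so a Picard--Lindel\"of argument yields a unique $z\in C^{1}([-1,1];H^{k}(\mathbb{T}))$, and by shrinking $t$ I can keep $\|z(\cdot,\gamma)-f_{0}\|_{H^{k}}\le\epsilon$ uniformly in $\gamma\in[-1,1]$. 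The intention is that $z(x,\gamma)$ will be the restriction of an analytic extension of $f_{0}$ to the horizontal line $\{x+i\gamma t:x\in\mathbb{T}\}$.

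To verify this, I introduce the Cauchy--Riemann residual
\[
A(z)(x,\gamma)=\partial_{\gamma}z(x,\gamma)-it\,\partial_{x}z(x,\gamma),
\]
and aim to prove $A(z)\equiv 0$ on $\mathbb{T}\times[-1,1]$. At $\gamma=0$, using the ODE together with the equation~\eqref{original equation1}, one has $A(z)(x,0)=it\,T(f_{0})-it\,f_{0}'(x)=0$. Differentiating $A(z)$ in $\gamma$, substituting $\partial_{\gamma}z=it\,T(z)$, commuting $\partial_{\gamma}$ with $\partial_{x}$, and applying hypothesis~(d) gives
\[
\partial_{\gamma}A(z)=it\,D_{f}T(z)[\partial_{\gamma}z]-(it)^{2}D_{f}T(z)[\partial_{x}z].
\]
Hypothesis~(e) then lets me slide the second factor of $it$ inside the Fr\'echet derivative, and the expression collapses to
\[
\partial_{\gamma}A(z)=it\,D_{f}T(z)\bigl[\partial_{\gamma}z-it\,\partial_{x}z\bigr]=it\,D_{f}T(z)[A(z)].
\]
The $L^{2}$ bound~(c) on $D_{f}T(z)$ then yields the energy estimate
\[
\frac{d}{d\gamma}\|A(z)(\cdot,\gamma)\|_{L^{2}(\mathbb{T})}^{2}\lesssim t\,\|A(z)(\cdot,\gamma)\|_{L^{2}(\mathbb{T})}^{2},
\]
and Gronwall's lemma, together with the vanishing of $A(z)$ at $\gamma=0$, forces $A(z)\equiv 0$.

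Finally, I define $F(x+iy):=z(x,y/t)$ on the closed strip $|y|\le t$; the identity $A(z)=0$ is precisely the Cauchy--Riemann equation $\partial_{y}F=i\partial_{x}F$ after the change of variable $y=\gamma t$, so $F$ is holomorphic on the open strip $|y|<t$ and agrees with $f_{0}$ on the real axis, proving that $f_{0}$ is real analytic. The only slightly delicate point is the justification of the interchange $\partial_{x}\partial_{\gamma}z=\partial_{\gamma}\partial_{x}z$ and of the chain rule~(d) applied to $T(z(\cdot,\gamma))$; both are routine because the Picard construction gives $z\in C^{1}([-1,1];H^{k}(\mathbb{T}))$ with $k\ge 1$ and there is no derivative loss in the right-hand side $it\,T(z)$---this is much cleaner than the analogous Sections~\ref{continuityfar}--\ref{diffrenfar} above, where the singular kernel of the Muskat equation required a careful separate treatment of the commutators.
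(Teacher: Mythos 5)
Your proof is correct and matches the paper's argument step for step: your ODE $\partial_{\gamma}z=it\,T(z)$ with $z(\cdot,0)=f_{0}$ is the paper's $\frac{d}{dt}f=iT(f)$ after the change of variable $s=\gamma t$; your residual $A(z)=\partial_{\gamma}z-it\,\partial_{x}z$ is, up to the constant factor $i/t$, the paper's $\bigl(\frac{d}{dx}+i\frac{d}{dt}\bigr)f$; and the derivation $\partial_{\gamma}A(z)=it\,D_{f}T(z)[A(z)]$ via hypotheses (d), (e), followed by the $L^{2}$ energy estimate from (c) and Gronwall with the vanishing initial condition $A(z)(\cdot,0)=it(T(f_{0})-f_{0}')=0$, is exactly the paper's computation \eqref{L2estimateT}. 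The only difference is the cosmetic choice to parametrize the imaginary extension by $\gamma\in[-1,1]$ at a fixed small scale $t$ rather than directly by $t\in(-t_{0},t_{0})$.
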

\begin{proof}
 First, we assume $f_0$ to be an analytic function with analytic continuation $f(x,t)$. Then through the Cauchy-Riemann equation and \eqref{original equation1} , we have
  \begin{equation}\label{extended equation1}
\begin{cases}
\frac{d}{dt}f(x,t)=iT(f(x,t)),\\
f(x,0)=f_0(x).
\end{cases}
\end{equation}
Our goal is to show this solution $f(x,t)$ does exist and is analytic.

 Through (a), (b), we have 
 \[
 \|T(f)-T(g)\|_{H^{k}(\mathbb{T})}\lesssim \|f-g\|_{H^k(\mathbb{T})}.
 \]
 We can use the Picard theorem to show there is a solution satisfying
\[
f(x,t)-f(x,0)=\int_{0}^{t}iT(f(x,\tau))d\tau.
\]
with $|t|<t_0$ for some $t_0>0$. Moreover 
\begin{equation}\label{fspace}
 f(x,t)\in W^{2,\infty}((-t_0, t_0),H^k(\mathbb{T})).   
\end{equation}
By \eqref{fspace}, we have
\[
\lim_{\Delta t \to 0}\|\frac{f(x,t)-f(x,t+\Delta t)}{\Delta t}-\frac{d}{dt}f(x,t)\|_{H^{k}(\mathbb{T})}=0.
\]
Hence 
\[
\lim_{\Delta t \to 0}\|\frac{\frac{d}{dx}f(x,t)-\frac{d}{dx}f(x,t+\Delta t)}{\Delta t}-\frac{d}{dx}\frac{d}{dt}f(x,t)\|_{H^{k-1}(\mathbb{T})}=0.
\]
Therefore we have
\begin{equation}\label{fspace2}
\frac{d}{dt}\frac{d}{dx}f(x,t)=\frac{d}{dx}\frac{d}{dt}f(x,t)\in C^{0}((-t_0, t_0),H^{k-1}(\mathbb{T}))\subset C^{0}((-t_0, t_0),L^2(\mathbb{T})),
\end{equation}
and
\begin{equation}\label{fspace3}
\frac{d}{dx}f(x,t)+i\frac{d}{dt}f(x,t)\in W^{1,\infty}((-t_0,t_0), L^2(\mathbb{T})).
\end{equation}
Then we can control $\|\frac{d}{dx}f(x,t)+i\frac{d}{dt}f(x,t)\|_{L^2(\mathbb{T})}$. We have
\begin{align}\label{L2estimateT}
    &|\frac{d}{dt}\int_{-\pi}^{\pi}|(\frac{d}{dx}+i\frac{d}{dt})f(x,t)|^2dx|\\\nonumber
    &=|2Re\int_{-\pi}^{\pi}(\frac{d}{dx}+i\frac{d}{dt})f(x,t)\overline{\frac{d}{dt}(\frac{d}{dx}+i\frac{d}{dt})f(x,t)}dx|\\\nonumber
    &=|2Re\int_{-\pi}^{\pi}(\frac{d}{dx}+i\frac{d}{dt})f(x,t)\overline{(\frac{d}{dx}+i\frac{d}{dt})\frac{d}{dt}f(x,t)}dx|\\\nonumber
    &=|2Re\int_{-\pi}^{\pi}(\frac{d}{dx}+i\frac{d}{dt})f(x,t)\overline{(\frac{d}{dx}+i\frac{d}{dt})iT(f(x,t))}dx|\\\nonumber
    &=|2Re\int_{-\pi}^{\pi}(\frac{d}{dx}+i\frac{d}{dt})f(x,t)\overline{iD_{f}T(f(x,t))[(\frac{d}{dx}+i\frac{d}{dt})[f(x,t)]}dx|\\\nonumber
    &\lesssim\int_{-\pi}^{\pi}|(\frac{d}{dx}+i\frac{d}{dt})f(x,t)|^2dx.
\end{align}
Here the first equality follows from \eqref{fspace3}, the second from \eqref{fspace2}.

Through \eqref{original equation1}, \eqref{extended equation1}, we have
\[
\|\frac{d}{dx}f(x,t)+i\frac{d}{dt}f(x,t)\|_{L^2(\mathbb{T})}|_{t=0}=0.
\]
Moreover, from\eqref{fspace3}, $\|\frac{d}{dx}f(x,t)+i\frac{d}{dt}f(x,t)\|_{L^2(\mathbb{T})}^2\in W^{1,\infty}(-t_0,t_0).$ Then 
\[
\|\frac{d}{dx}f(x,t)+i\frac{d}{dt}f(x,t)\|_{L^2(\mathbb{T})}^2=\int_{0}^{t}\frac{d}{d\tau}\|\frac{d}{dx}f(x,\tau)+i\frac{d}{d\tau}f(x,\tau)\|_{L^2(\mathbb{T})}^2d\tau.
\]
Hence we can use the Gronwall inequality and get 
\begin{equation}\label{fL2}
\|\frac{d}{dx}f(x,t)+i\frac{d}{dt}f(x,t)\|_{L^2(\mathbb{T})}=0.
\end{equation}
Moreover, since $k\geq 1$, from \eqref{fspace}, we have $\frac{d}{dt}f(x,t)\in W^{1,\infty}((-t_0,t_0),H^{1}(\mathbb{T}))$. Then $\frac{d}{dt}f(x,t)$ is continuous in $x$ and $t$. 
Therefore $\partial_{x}f(x,t)$ is continuous in $x$ and $t$.

Then by the \eqref{fL2} and \eqref{extended equation1},  we have the analyticity.
\end{proof}
\section{Appendix}
\begin{lemma}\label{goodterm0}
For $G(\alpha,\beta)\in C^1([-2\delta,2\delta]\times[-\pi,\pi])$, we have
\begin{align*}
&\|p.v.\int_{-\pi}^{\pi}\frac{G(\alpha,\beta)}{(\alpha-\beta)}d\beta\|_{L^\infty[-2\delta,2\delta]}\lesssim \|G(\alpha,\beta)\|_{C^1([-2\delta,2\delta]\times[-\pi,\pi])}.
\end{align*}
\begin{proof}
We have
\begin{align*}
&|p.v.\int_{-\pi}^{\pi}\frac{G(\alpha,\beta)}{(\alpha-\beta)}d\beta|\\
&=|\int_{-\pi}^{\pi}\frac{G(\alpha,\beta)-G(\alpha,\alpha)}{(\alpha-\beta)}d\beta|+|p.v.\int_{-\pi}^{\pi}\frac{1}{\alpha-\beta}d\beta G(\alpha,\alpha)|\lesssim
\|G(\alpha,\beta)\|_{C^1([-2\delta,2\delta]\times[-\pi,\pi])}.
\end{align*}
\end{proof}
\end{lemma}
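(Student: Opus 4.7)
The plan is to apply the standard add-and-subtract trick for Cauchy-type principal value integrals. Writing
\[
\mathrm{p.v.}\!\int_{-\pi}^{\pi}\frac{G(\alpha,\beta)}{\alpha-\beta}\,d\beta = \int_{-\pi}^{\pi}\frac{G(\alpha,\beta)-G(\alpha,\alpha)}{\alpha-\beta}\,d\beta \;+\; G(\alpha,\alpha)\,\mathrm{p.v.}\!\int_{-\pi}^{\pi}\frac{d\beta}{\alpha-\beta},
\]
the $C^1$ regularity of $G$ will remove the singularity in the first term, while the remaining principal value integral is a pure kernel computation with no dependence on $G$.

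For the first piece, the mean value theorem applied to $G(\alpha,\cdot)$ gives
\[
\left|\frac{G(\alpha,\beta)-G(\alpha,\alpha)}{\alpha-\beta}\right|\le \|\partial_\beta G\|_{L^\infty}\le \|G\|_{C^1([-2\delta,2\delta]\times[-\pi,\pi])},
\]
so that the integrand is absolutely bounded and the integral is at most $2\pi\|G\|_{C^1}$, uniformly in $\alpha\in[-2\delta,2\delta]$. For the second piece, a direct computation for $\alpha\in(-\pi,\pi)$ yields $\mathrm{p.v.}\!\int_{-\pi}^{\pi}\frac{d\beta}{\alpha-\beta}=\log\frac{\alpha+\pi}{\pi-\alpha}$, which is uniformly bounded on $[-2\delta,2\delta]$ since $\delta$ is fixed sufficiently small (in particular, well below $\pi/2$ by the notation section of the paper). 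Combining this with the trivial estimate $|G(\alpha,\alpha)|\le\|G\|_{C^0}\le\|G\|_{C^1}$ and taking the supremum over $\alpha\in[-2\delta,2\delta]$ produces the stated bound.

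There is no substantive obstacle here: this is a textbook-level principal value estimate, and the $C^1$ hypothesis on $G$ is precisely what is needed so that a single first-order Taylor expansion in the $\beta$ variable suffices — no deeper cancellation of the Hilbert-transform type is required. The only mild subtlety is keeping the logarithmic factor from the explicit principal value bounded, which is automatic from the choice of $\delta$.
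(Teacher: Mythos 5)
Your proof is correct and follows exactly the same decomposition as the paper: add and subtract $G(\alpha,\alpha)$, bound the difference quotient via the $C^1$ hypothesis, and observe that $\mathrm{p.v.}\!\int_{-\pi}^{\pi}\frac{d\beta}{\alpha-\beta}$ is uniformly bounded for $\alpha\in[-2\delta,2\delta]$. You merely spell out the two supporting computations (mean value theorem and the explicit logarithmic evaluation of the principal value) that the paper leaves implicit.
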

\begin{lemma}\label{goodterm1}
For $g(\alpha)\in L^2[-\pi,\pi]$, $G(\alpha,\beta)\in C^1([-2\delta,2\delta]\times[-\pi,\pi])$, we have
\begin{align*}
&\|\int_{-\pi}^{\pi}G(\alpha,\beta)\frac{g(\alpha)-g(\beta)}{(\alpha-\beta)}d\beta\|_{L^2[-2\delta,2\delta]}\lesssim \|G(\alpha,\beta)\|_{C^1([-2\delta,2\delta]\times[-\pi,\pi])}\|g\|_{L^2[-\pi,\pi]}.
\end{align*}
\end{lemma}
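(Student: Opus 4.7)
The plan is to reduce the estimate to two well-behaved pieces by subtracting and adding $G(\alpha,\alpha)$. Write
\[
G(\alpha,\beta)\frac{g(\alpha)-g(\beta)}{\alpha-\beta} \;=\; \frac{G(\alpha,\beta)-G(\alpha,\alpha)}{\alpha-\beta}(g(\alpha)-g(\beta))\;+\;G(\alpha,\alpha)\,\frac{g(\alpha)-g(\beta)}{\alpha-\beta},
\]
and bound the two corresponding integrals $I_1(\alpha)$ and $I_2(\alpha)$ separately in $L^2_{\alpha}[-2\delta,2\delta]$.

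For $I_1$, the mean value theorem in the $\beta$-variable gives $|G(\alpha,\beta)-G(\alpha,\alpha)|\le\|G\|_{C^1}|\alpha-\beta|$, so the singularity cancels and the kernel is bounded by $\|G\|_{C^1}$. Hence
\[
|I_1(\alpha)|\le \|G\|_{C^1}\int_{-\pi}^{\pi}|g(\alpha)-g(\beta)|\,d\beta \le \|G\|_{C^1}\bigl(2\pi|g(\alpha)|+\|g\|_{L^1}\bigr),
\]
and taking $L^2[-2\delta,2\delta]$ norms, together with $\|g\|_{L^1}\lesssim\|g\|_{L^2}$ on $[-\pi,\pi]$, yields $\|I_1\|_{L^2}\lesssim \|G\|_{C^1}\|g\|_{L^2}$.

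For $I_2$, factor out the bounded quantity $G(\alpha,\alpha)$ and split
\[
\int_{-\pi}^{\pi}\frac{g(\alpha)-g(\beta)}{\alpha-\beta}d\beta \;=\; g(\alpha)\cdot\mathrm{p.v.}\!\int_{-\pi}^{\pi}\frac{d\beta}{\alpha-\beta}\;-\;\mathrm{p.v.}\!\int_{-\pi}^{\pi}\frac{g(\beta)}{\alpha-\beta}d\beta.
\]
For $\delta$ small and $\alpha\in[-2\delta,2\delta]$, the first principal value equals $\log\!\bigl(|\pi-\alpha|/|\pi+\alpha|\bigr)$, which is uniformly bounded; multiplying by $g(\alpha)$ contributes $\lesssim\|g\|_{L^2}$ in $L^2_\alpha[-2\delta,2\delta]$. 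The second term is the restriction to $[-2\delta,2\delta]$ of the (truncated) Hilbert transform of the $L^2(\mathbb{R})$-function $g\chi_{[-\pi,\pi]}$; its $L^2$-boundedness is the classical Riesz theorem, giving $\lesssim\|g\|_{L^2}$. Combining, $\|I_2\|_{L^2}\lesssim\|G\|_{C^0}\|g\|_{L^2}$, and the lemma follows.

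The step that requires a named input is the bound on the Hilbert transform in $I_2$; everything else is elementary and relies only on the $C^1$ regularity of $G$ to cancel the singularity of $1/(\alpha-\beta)$. No obstacle of real depth arises: the main thing is the standard kernel decomposition that isolates the Calderón--Zygmund piece from a bounded one.
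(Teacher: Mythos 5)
Your decomposition $G(\alpha,\beta)\frac{g(\alpha)-g(\beta)}{\alpha-\beta}=\frac{G(\alpha,\beta)-G(\alpha,\alpha)}{\alpha-\beta}(g(\alpha)-g(\beta))+G(\alpha,\alpha)\frac{g(\alpha)-g(\beta)}{\alpha-\beta}$ is exactly the one used in the paper, and the bounds you obtain for the two pieces (boundedness of the smoothed kernel for the first, $L^2$ boundedness of the Hilbert transform plus the bounded logarithmic p.v.\ integral for the second) are precisely what the paper's terse proof is invoking; aside from an immaterial sign flip in the formula for $\mathrm{p.v.}\int_{-\pi}^{\pi}\frac{d\beta}{\alpha-\beta}$, your argument matches. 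It is correct and essentially identical in approach, just with the implicit steps written out.
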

\begin{proof}
We have
\begin{align*}
 &\int_{-\pi}^{\pi}G(\alpha,\beta)\frac{g(\alpha)-g(\beta)}{(\alpha-\beta)}d\beta\\
 &=\int_{-\pi}^{\pi}\frac{G(\alpha,\beta)-G(\alpha,\alpha)}{\alpha-\beta}(g(\alpha)-g(\beta))d\beta+G(\alpha,\alpha)p.v.\int_{-\pi}^{\pi}\frac{g(\alpha)-g(\beta)}{\alpha-\beta}d\beta\\
&=Term_{1}+Term_{2}.
\end{align*}
Here
\begin{align*}
    \|Term_1\|_{L^2[-2\delta,2\delta]}\lesssim\|G(\alpha,\beta)\|_{C^1([-2\delta,2\delta]\times[-\pi,\pi])}\|g(\alpha)\|_{L^2[-\pi,\pi]},
\end{align*}
and
\begin{align*}
    \|Term_2\|_{L^2[-2\delta,2\delta]}\lesssim\|G(\alpha,\alpha)\|_{C^0([-2\delta,2\delta])}\|g(\alpha)\|_{L^2[-\pi,\pi]}.
\end{align*}
\end{proof}
\begin{corollary}\label{goodterm2}
 For $g(\alpha)\in L^2[-\pi,\pi]$, $h(\alpha)\in C^2[-\pi,\pi]$, $G(\alpha,\beta)\in C^1([-2\delta,2\delta]\times[-\pi,\pi])$, we have
\begin{align*}
&\|\int_{-\pi}^{\pi}G(\alpha,\beta)\frac{(g(\alpha)-g(\beta))(h(\alpha)-h(\beta))}{(\alpha-\beta)^2}d\beta\|_{L^2[-2\delta,2\delta]}\lesssim \|G(\alpha,\beta)\|_{C^1([-2\delta,2\delta]\times[-\pi,\pi])}\|g\|_{L^2[-2\delta,2\delta]}\|h\|_{C^2[-\pi,\pi]}.
 \end{align*}
\end{corollary}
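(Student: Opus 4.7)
The plan is to reduce Corollary~\ref{goodterm2} directly to Lemma~\ref{goodterm1} by absorbing the smooth difference quotient in $h$ into the kernel. First I would define
\[
H(\alpha,\beta) = \int_0^1 h'\bigl(\beta + s(\alpha-\beta)\bigr)\,ds,
\]
so that $H(\alpha,\beta) = \frac{h(\alpha)-h(\beta)}{\alpha-\beta}$ for $\alpha\neq\beta$ and $H(\alpha,\alpha)=h'(\alpha)$. Since $h\in C^2[-\pi,\pi]$, differentiation under the integral sign (using $h''\in C^0$) shows that $H$ extends to a $C^1$ function on $[-\pi,\pi]\times[-\pi,\pi]$ with the quantitative bound
\[
\|H\|_{C^1([-\pi,\pi]^2)} \lesssim \|h\|_{C^2[-\pi,\pi]}.
\]

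Next I would define the new kernel $\tilde G(\alpha,\beta) = G(\alpha,\beta)\,H(\alpha,\beta)$. By the product rule, this lies in $C^1([-2\delta,2\delta]\times[-\pi,\pi])$ with
\[
\|\tilde G\|_{C^1([-2\delta,2\delta]\times[-\pi,\pi])} \lesssim \|G\|_{C^1([-2\delta,2\delta]\times[-\pi,\pi])}\,\|h\|_{C^2[-\pi,\pi]}.
\]

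Finally, I would use the algebraic identity
\[
G(\alpha,\beta)\,\frac{(g(\alpha)-g(\beta))(h(\alpha)-h(\beta))}{(\alpha-\beta)^2} \;=\; \tilde G(\alpha,\beta)\,\frac{g(\alpha)-g(\beta)}{\alpha-\beta},
\]
and apply Lemma~\ref{goodterm1} with $\tilde G$ replacing $G$. The resulting bound is
\[
\Bigl\|\int_{-\pi}^\pi \tilde G(\alpha,\beta)\,\tfrac{g(\alpha)-g(\beta)}{\alpha-\beta}\,d\beta\Bigr\|_{L^2[-2\delta,2\delta]} \lesssim \|\tilde G\|_{C^1}\,\|g\|_{L^2[-\pi,\pi]},
\]
which combined with the estimate on $\|\tilde G\|_{C^1}$ above gives the statement of the corollary. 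There is no genuine obstacle: the only thing to verify carefully is the claim that $H\in C^1$ with the stated bound, and this is immediate from the integral representation of $H$ together with $h\in C^2$.
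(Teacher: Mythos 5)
Your proposal is exactly the paper's argument: the paper also sets $\tilde G(\alpha,\beta)=G(\alpha,\beta)\frac{h(\alpha)-h(\beta)}{\alpha-\beta}$ and invokes Lemma~\ref{goodterm1}. You simply spell out, via the integral representation $H(\alpha,\beta)=\int_0^1 h'(\beta+s(\alpha-\beta))\,ds$, why the difference quotient is $C^1$ with norm controlled by $\|h\|_{C^2}$, which the paper leaves implicit; the two proofs are otherwise identical.
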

\begin{proof}
We can use lemma \ref{goodterm1} and let $\tilde{G}(\alpha,\beta)=G(\alpha,\beta)\frac{h(\alpha)-h(\beta)}{(\alpha-\beta)}.$
\end{proof}
\begin{lemma}\label{goodterm3}
For $g(\alpha)\in H^1[-\pi,\pi]$, $G(\alpha,\beta)\in C^0([-2\delta,2\delta]\times[-\pi,\pi])$, we have
\begin{align*}
&\|\int_{-\pi}^{\pi}G(\alpha,\beta)\frac{g(\alpha)-g(\beta)}{(\alpha-\beta)}d\beta\|_{L^2[-2\delta,2\delta]}\lesssim \|G(\alpha,\beta)\|_{C^0([-2\delta,2\delta]\times[-\pi,\pi])}\|g\|_{H^1[-\pi,\pi]}.
\end{align*}
\end{lemma}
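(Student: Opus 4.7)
My plan is to exploit the Sobolev embedding $H^1(\mathbb{T}) \hookrightarrow C^{1/2}(\mathbb{T})$ so that the difference quotient $\frac{g(\alpha)-g(\beta)}{\alpha-\beta}$ is pointwise dominated by an integrable singular weight $|\alpha-\beta|^{-1/2}$, after which the rest of the bound follows by pulling the $C^0$ norm of $G$ out of the integral.

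More concretely, I would first write $g(\alpha)-g(\beta)=\int_\beta^\alpha g'(\tau)\,d\tau$ and apply Cauchy--Schwarz to obtain the Morrey-type estimate
\[
|g(\alpha)-g(\beta)|\ \le\ |\alpha-\beta|^{1/2}\,\|g'\|_{L^2[-\pi,\pi]}\ \le\ |\alpha-\beta|^{1/2}\,\|g\|_{H^1[-\pi,\pi]}.
\]
Dividing by $|\alpha-\beta|$ gives
\[
\left|\frac{g(\alpha)-g(\beta)}{\alpha-\beta}\right|\ \le\ |\alpha-\beta|^{-1/2}\,\|g\|_{H^1[-\pi,\pi]}.
\]

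Next I would insert this into the integrand. Using $|G(\alpha,\beta)|\le \|G\|_{C^0([-2\delta,2\delta]\times[-\pi,\pi])}$ and the fact that $\int_{-\pi}^{\pi}|\alpha-\beta|^{-1/2}d\beta$ is finite and bounded uniformly in $\alpha\in[-2\delta,2\delta]$, I obtain the pointwise bound
\[
\left|\int_{-\pi}^{\pi}G(\alpha,\beta)\,\frac{g(\alpha)-g(\beta)}{\alpha-\beta}\,d\beta\right|\ \lesssim\ \|G\|_{C^0([-2\delta,2\delta]\times[-\pi,\pi])}\,\|g\|_{H^1[-\pi,\pi]},
\]
valid for all $\alpha\in[-2\delta,2\delta]$. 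Since this bound is uniform in $\alpha$ on the compact set $[-2\delta,2\delta]$, taking $L^2$ norms in $\alpha$ costs only a constant $\sqrt{4\delta}$, yielding the desired estimate.

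There is essentially no obstacle: the whole point is that $H^1$ regularity in one spatial variable provides exactly the $1/2$-Hölder cancellation needed to tame the $1/(\alpha-\beta)$ singularity, and no cancellation or principal value is required once $G$ is merely continuous. Compared to Lemma \ref{goodterm1} (which used $C^1$ regularity of $G$ plus boundedness of the Hilbert transform via the $p.v.$ term), here the price of giving up a derivative on $G$ is paid for by demanding an extra half-derivative on $g$.
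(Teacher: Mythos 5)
Your proof is correct, and it is essentially the same argument as the paper's: the paper also writes the difference quotient via the fundamental theorem of calculus as $\int_0^1 g'(\tau\alpha+(1-\tau)\beta)\,d\tau$ and then bounds the resulting double integral by $\|g\|_{H^1}$, which implicitly requires exactly the Cauchy--Schwarz/Morrey step you spell out in the form $|g(\alpha)-g(\beta)|\le|\alpha-\beta|^{1/2}\|g'\|_{L^2}$. You simply make that step explicit up front and then conclude with a uniform-in-$\alpha$ pointwise bound, which is cleaner but not a different route.
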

\begin{proof}
We have
\begin{align*}
    &|\int_{-\pi}^{\pi}G(\alpha,\beta)\frac{g(\alpha)-g(\beta)}{(\alpha-\beta)}d\beta|\\
    &\leq \|G(\alpha,\beta)\|_{C^0([-2\delta,2\delta]\times[-\pi,\pi])}\int_{-\pi}^{\pi}\frac{|g(\alpha)-g(\beta)|}{(\alpha-\beta)}d\beta\\
    &\leq \|G(\alpha,\beta)\|_{C^0([-2\delta,2\delta]\times[-\pi,\pi])}\int_{-\pi}^{\pi}\int_{0}^1|g'(\tau(\alpha)+(1-\tau)(\beta))|d\tau d\beta\\
    &\lesssim \|G(\alpha,\beta)\|_{C^0([-2\delta,2\delta]\times[-\pi,\pi])}\|g\|_{H^1[-\pi,\pi]}.
\end{align*}
\end{proof}
\begin{lemma}\label{goodterm4}
For $g(\alpha)\in H^k(\mathbb{T})$, $G(\alpha,\beta)\in C^1([-2\delta,2\delta]\times\mathbb{T})\cap C^k([-2\delta,2\delta]\times\mathbb{T})$, for $k\geq 0$, we have
\begin{align*}
    \|\int_{-\pi}^{\pi}G(\alpha,\beta)\frac{g(\alpha)-g(\beta)}{\alpha-\beta}d\beta\|_{H^{k}[-2\delta,2\delta]}\lesssim(\|G(\alpha,\beta)\|_{C^1([-2\delta,2\delta]\times\mathbb{T})}+\|G(\alpha,\beta)\|_{C^k([-2\delta,2\delta]\times\mathbb{T})})\|g\|_{H^k(\mathbb{T})}.
    \end{align*}
\end{lemma}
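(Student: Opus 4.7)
\textbf{Proof proposal for Lemma \ref{goodterm4}.}
The plan is to reduce the $H^k$ estimate to repeated applications of Lemmas \ref{goodterm1} and \ref{goodterm3} via an explicit formula for $\partial_\alpha^k I(\alpha)$, where $I(\alpha):=\int_{-\pi}^{\pi}G(\alpha,\beta)\frac{g(\alpha)-g(\beta)}{\alpha-\beta}\,d\beta$. The base case $k=0$ is exactly Lemma \ref{goodterm1}, so I assume $k\ge 1$.

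First I would change variables $\beta=\alpha-\eta$ and exploit $2\pi$-periodicity to pull the $\alpha$-dependence off the integration endpoints, giving
\[
I(\alpha)=\int_{-\pi}^{\pi}G(\alpha,\alpha-\eta)\,\frac{g(\alpha)-g(\alpha-\eta)}{\eta}\,d\eta.
\]
Differentiation under the integral together with the Leibniz rule applied to the product $G(\alpha,\alpha-\eta)\cdot\frac{g(\alpha)-g(\alpha-\eta)}{\eta}$ then yields, after undoing the change of variables,
\[
\partial_\alpha^k I(\alpha)=\sum_{j=0}^{k}\binom{k}{j}\int_{-\pi}^{\pi}G_{k-j}(\alpha,\beta)\,\frac{g^{(j)}(\alpha)-g^{(j)}(\beta)}{\alpha-\beta}\,d\beta,\qquad G_m:=(\partial_1+\partial_2)^m G.
\]
The legitimacy of differentiating under the integral when $g\in H^k$ is handled by first proving the identity for smooth $g$ and then passing to the limit using the $L^2$-bound of Lemma \ref{goodterm1}.

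The next step is to bound each term. For $1\le j\le k$ the kernel $G_{k-j}$ involves $k-j\le k-1$ derivatives of $G$, so it lies in $C^{j}([-2\delta,2\delta]\times\mathbb{T})\subset C^1$ (here we use $j\ge 1$), while $g^{(j)}\in L^2(\mathbb{T})$; Lemma \ref{goodterm1} then gives the bound $\|G_{k-j}\|_{C^1}\|g^{(j)}\|_{L^2}\lesssim\|G\|_{C^{k-j+1}}\|g\|_{H^k}$. For the remaining term $j=0$ the kernel $G_k$ is only $C^0$, but since $k\ge 1$ we have $g\in H^1$, and Lemma \ref{goodterm3} applies, yielding $\|G_k\|_{C^0}\|g\|_{H^1}\lesssim\|G\|_{C^k}\|g\|_{H^k}$. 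Summing over $j$, every $C^m$-norm of $G$ that appears satisfies $m\in\{1,\ldots,k\}$, and hence is dominated by $\|G\|_{C^1}+\|G\|_{C^k}$. Combining with the trivial $L^2$-bound from Lemma \ref{goodterm1}, I arrive at the claimed $H^k$ estimate.

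The main, and in fact only, obstacle is the $j=0$ term: a naive application of Lemma \ref{goodterm1} would demand $G\in C^{k+1}$, one derivative more than is hypothesised. Switching to Lemma \ref{goodterm3}, which trades a derivative on $G$ for one on $g$, exactly resolves this gap and is the structural reason both $\|G\|_{C^1}$ and $\|G\|_{C^k}$ are allowed to appear on the right-hand side of the estimate.
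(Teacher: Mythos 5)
Your proposal is correct and follows essentially the same route as the paper's proof: change variables $\beta\mapsto\alpha-\beta$ to move all $\alpha$-dependence into the integrand, apply the Leibniz rule to split $\partial_\alpha^k$ over $G$ and $g$, use Lemma \ref{goodterm1} for the terms where at least one derivative lands on $g$ (so the $G$-factor retains $C^1$ regularity), and use Lemma \ref{goodterm3} for the single extremal term where all $k$ derivatives land on $G$ (trading a derivative on $G$ for the $H^1$ regularity of $g$). The only cosmetic difference is that you index by the number of derivatives on $g$ while the paper indexes by the number on $G$; the decomposition and the lemma assignments are identical.
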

\begin{proof}
From lemma \ref{goodterm1}, when $k=0$, we have the $L^2$ norm.
Moreover, when $k\geq 1$, we have
\begin{align*}
    &\partial_{\alpha}^{k}\int_{-\pi}^{\pi}G(\alpha,\beta)\frac{g(\alpha)-g(\beta)}{\alpha-\beta}d\beta\\
    &=\partial_{\alpha}^{k}\int_{-\pi}^{\pi}G(\alpha,\alpha-\beta)\frac{g(\alpha)-g(\alpha-\beta)}{\beta}d\beta\\
    &=\sum_{0\leq j \leq k} C_j \int_{-\pi}^{\pi}\partial_{\alpha}^j G(\alpha,\alpha-\beta)\frac{\partial_{\alpha}^{k-j}g(\alpha)-\partial_{\alpha}^{k-j}g(\alpha-\beta)}{\beta}d\beta.
\end{align*}
For $j\leq k-1$, we could use lemma \ref{goodterm1} to get the estimate.
For $j= k$, we could use lemma \ref{goodterm3} to get the estimate.
\end{proof}
\begin{lemma}\label{goodterm5}
For $k\geq 2$, $g(\alpha)\in H^{k}(\mathbb{T})$, $G(\alpha,\beta)\in  C^k([-2\delta,2\delta]\times\mathbb{T})$, we have
\begin{align*}
    &\|\partial_{\alpha}^{k}(\int_{-\pi}^{\pi}G(\alpha,\beta)\frac{g(\alpha)-g(\beta)}{\alpha-\beta}d\beta)-\int_{-\pi}^{\pi}G(\alpha,\beta)\frac{\partial_{\alpha}^{k}g(\alpha)-\partial_{\beta}^{k}g(\beta)}{\alpha-\beta}d\beta\|_{L^{2}[-2\delta,2\delta]}\\
    &\lesssim\|G(\alpha,\beta)\|_{C^k([-2\delta,2\delta]\times\mathbb{T})}\|g\|_{H^{k-1}(\mathbb{T})}
\end{align*}
\end{lemma}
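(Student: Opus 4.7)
The strategy is to reduce the expression via a change of variables to an integral where only one argument carries $\alpha$-dependence (through a shift), then apply Leibniz and peel off the top-order term, leaving a sum of lower-order pieces that fall under Lemma \ref{goodterm1} or Lemma \ref{goodterm3}.

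First I substitute $\beta \mapsto \alpha - \beta$ and use $2\pi$-periodicity to rewrite
\[
\int_{-\pi}^{\pi} G(\alpha, \beta) \frac{g(\alpha) - g(\beta)}{\alpha - \beta}\, d\beta = \int_{-\pi}^{\pi} H(\alpha, \beta) \frac{g(\alpha) - g(\alpha - \beta)}{\beta}\, d\beta,
\]
with $H(\alpha, \beta) := G(\alpha, \alpha - \beta)$. By the chain rule $H \in C^k([-2\delta, 2\delta] \times \mathbb{T})$ and $\|H\|_{C^k} \lesssim \|G\|_{C^k}$. The advantage of this rewriting is that the singular difference quotient now commutes cleanly with $\partial_\alpha$: for any $m$,
\[
\partial_\alpha^m \Bigl(\frac{g(\alpha) - g(\alpha - \beta)}{\beta}\Bigr) = \frac{\partial_\alpha^m g(\alpha) - \partial_\alpha^m g(\alpha - \beta)}{\beta}.
\]

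Next I differentiate $k$ times under the integral and apply Leibniz to obtain
\[
\partial_\alpha^k \int_{-\pi}^{\pi} H \cdot \tfrac{g(\alpha) - g(\alpha - \beta)}{\beta}\, d\beta = \sum_{j=0}^{k} \binom{k}{j} \int_{-\pi}^{\pi} \partial_\alpha^j H(\alpha, \beta) \cdot \frac{\partial_\alpha^{k-j} g(\alpha) - \partial_\alpha^{k-j} g(\alpha - \beta)}{\beta}\, d\beta.
\]
The $j = 0$ term, after undoing the substitution $\beta \mapsto \alpha-\beta$, coincides exactly with $\int_{-\pi}^{\pi} G(\alpha, \beta) \frac{\partial_\alpha^k g(\alpha) - \partial_\beta^k g(\beta)}{\alpha - \beta}\, d\beta$, which is the piece being subtracted in the statement. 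It therefore remains to control the $L^2[-2\delta, 2\delta]$ norm of the remaining terms with $1 \leq j \leq k$ by $\|G\|_{C^k}\|g\|_{H^{k-1}}$.

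For each $j$ with $1 \leq j \leq k-1$, the kernel $\partial_\alpha^j H$ lies in $C^{k-j} \subseteq C^1$ (since $k - j \geq 1$), and $\partial_\alpha^{k-j} g \in L^2$, so Lemma \ref{goodterm1} applied to the pair $(\partial_\alpha^j H, \partial_\alpha^{k-j} g)$ yields the bound $\|\partial_\alpha^j H\|_{C^1} \|\partial_\alpha^{k-j} g\|_{L^2} \lesssim \|G\|_{C^k} \|g\|_{H^{k-1}}$, using $j + 1 \leq k$ on the kernel side and $k - j \leq k-1$ on the $g$ side. The edge case $j = k$ only gives $\partial_\alpha^k H \in C^0$, so Lemma \ref{goodterm1} no longer applies; however, since now $k - j = 0$, I instead appeal to Lemma \ref{goodterm3}, which bounds the corresponding integral by $\|\partial_\alpha^k H\|_{C^0} \|g\|_{H^1}$. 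The hypothesis $k \geq 2$ is used precisely here to absorb $\|g\|_{H^1} \leq \|g\|_{H^{k-1}}$, after which summation over $j$ closes the estimate.

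I do not anticipate any serious obstacle. The argument is essentially a commutator-type bookkeeping: the change of variables removes the direct $\alpha$-dependence from the second slot of $G$, and the two earlier lemmas exactly cover the two regimes that arise, namely Lemma \ref{goodterm1} trading one kernel derivative for one $L^2$-derivative of $g$ (valid whenever the kernel still has at least one derivative to spare), and Lemma \ref{goodterm3} trading no kernel regularity for one derivative on $g$ (valid at the endpoint $j = k$, which is why the hypothesis $k \geq 2$ is exactly what is needed).
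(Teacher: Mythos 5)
Your proposal is correct and follows essentially the same path as the paper's own proof: change variables to $\beta\mapsto\alpha-\beta$ so the singular difference quotient commutes with $\partial_\alpha$, apply Leibniz, observe that the $j=0$ term is exactly the subtracted piece, and bound the $1\le j\le k-1$ terms via Lemma \ref{goodterm1} and the endpoint $j=k$ term via Lemma \ref{goodterm3}, using $k\ge 2$ to close. No substantive difference.
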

\begin{proof}
 We have
\begin{align*}
   &\partial_{\alpha}^{k}(\int_{-\pi}^{\pi}G(\alpha,\beta)\frac{g(\alpha)-g(\beta)}{\alpha-\beta}d\beta)-\int_{-\pi}^{\pi}G(\alpha,\beta)\frac{\partial_{\alpha}^{k}g(\alpha)-\partial_{\beta}^{k}g(\beta)}{\alpha-\beta}d\beta\\
   &=\sum_{1\leq j\leq k} C_j\int_{-\pi}^{\pi}\partial_{\alpha}^{j}(G(\alpha,\alpha-\beta))\frac{\partial_{\alpha}^{k-j}g(\alpha)-\partial_{\beta}^{k-j}g(\alpha-\beta)}{\beta}d\beta.
\end{align*}
Then for $j\leq k-1$. we could use lemma \ref{goodterm1} to  get the  estimate.
Then for $j= k$. we could use lemma \ref{goodterm3} to  get the  estimate.
\end{proof} 
\begin{lemma}\label{goodterm6}
For $g(\alpha)\in H^3(\mathbb{T})$, $h(\alpha)\in H^3(\mathbb{T})$, $G(\alpha,\beta)\in C^3([-2\delta,2\delta]\times\mathbb{T})$, we have
\begin{align*}
    &\|\partial_{\alpha}^{3}(\int_{-\pi}^{\pi}G(\alpha,\beta)\frac{g(\alpha)-g(\beta)}{\alpha-\beta}\frac{h(\alpha)-h(\beta)}{\alpha-\beta}d\beta)\|_{L^2[-2\delta,2\delta]}\\
    &\lesssim\|G(\alpha,\beta)\|_{C^3([-2\delta,2\delta]\times\mathbb{T})}\|g\|_{H^{3}(\mathbb{T})}\|h\|_{H^{3}(\mathbb{T})}.
\end{align*}
\end{lemma}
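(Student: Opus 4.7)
The strategy is to apply the substitution $\beta\mapsto\alpha-\beta$ (using periodicity) and then distribute $\partial_\alpha^3$ by the Leibniz rule, which yields a sum, over triples $(j_1,j_2,j_3)$ with $j_1+j_2+j_3=3$, of integrals of the form
\begin{equation*}
I_{j_1,j_2,j_3}(\alpha)=\int_{-\pi}^{\pi}\partial_\alpha^{j_1}G(\alpha,\alpha-\beta)\,\frac{g^{(j_2)}(\alpha)-g^{(j_2)}(\alpha-\beta)}{\beta}\,\frac{h^{(j_3)}(\alpha)-h^{(j_3)}(\alpha-\beta)}{\beta}\,d\beta.
\end{equation*}
I will estimate each of the ten resulting terms in $L^2[-2\delta,2\delta]$ separately by absorbing one difference quotient into a new kernel and then appealing to Lemma \ref{goodterm1} or Lemma \ref{goodterm3}. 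Throughout I use the one-dimensional Sobolev embedding $H^3(\mathbb{T})\hookrightarrow C^{2,1/2}(\mathbb{T})$.

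For triples with $\max(j_2,j_3)\le 1$, each difference quotient $(g^{(j_2)}(\alpha)-g^{(j_2)}(\alpha-\beta))/\beta$ is controlled in $L^\infty$ by $\|g\|_{C^{j_2+1}}\lesssim\|g\|_{H^3}$, and similarly for the $h$ factor. In that case the integrand is pointwise bounded by $C\|G\|_{C^3}\|g\|_{H^3}\|h\|_{H^3}$, which trivially implies the desired $L^2$ estimate. The nontrivial triples are therefore $(0,3,0)$, $(0,0,3)$, $(1,2,0)$, $(1,0,2)$, $(0,2,1)$ and $(0,1,2)$, where at least one difference quotient has order $\ge 2$.

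For $(0,3,0)$ I set $\tilde G(\alpha,\beta):=G(\alpha,\beta)\cdot(h(\alpha)-h(\beta))/(\alpha-\beta)=G(\alpha,\beta)\int_0^1 h'(\tau\alpha+(1-\tau)\beta)\,d\tau$. Because $h\in H^3\subset C^2$, this kernel is jointly $C^1$ with $\|\tilde G\|_{C^1}\lesssim\|G\|_{C^1}\|h\|_{C^2}$, and Lemma \ref{goodterm1} applied to the $L^2$ density $g^{(3)}$ yields the bound. The triple $(1,2,0)$ is analogous: with $\tilde G:=\partial_\alpha G\cdot (h(\alpha)-h(\beta))/(\alpha-\beta)\in C^0$, Lemma \ref{goodterm3} applied with density $g''\in H^1$ gives the estimate. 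The cases $(0,0,3)$ and $(1,0,2)$ are identical after swapping the roles of $g$ and $h$.

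The genuinely delicate cases are the cross triples $(0,2,1)$ and $(0,1,2)$, where two difference quotients are simultaneously of order $\ge 1$, and neither of $(g''(\alpha)-g''(\beta))/(\alpha-\beta)$ nor $(h'(\alpha)-h'(\beta))/(\alpha-\beta)$ is bounded a priori from $g,h\in H^3$ alone — indeed, Sobolev embedding only gives a size $|\alpha-\beta|^{-1/2}$ for the first and $C^0$ for the second, so the naive pointwise product is not even integrable. The key trick is to rewrite the less singular factor by its integral representation
\begin{equation*}
\frac{h'(\alpha)-h'(\beta)}{\alpha-\beta}=\int_0^1 h''\bigl(\tau\alpha+(1-\tau)\beta\bigr)\,d\tau,
\end{equation*}
and absorb it into a new kernel $\tilde G(\alpha,\beta):=G(\alpha,\beta)\int_0^1 h''(\tau\alpha+(1-\tau)\beta)\,d\tau$. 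Since $h''\in C^0$, we have $\tilde G\in C^0$ with $\|\tilde G\|_{C^0}\lesssim \|G\|_{C^0}\|h\|_{C^2}$, and Lemma \ref{goodterm3} applied to $\tilde G$ with density $g''\in H^1$ produces the required bound $\lesssim\|G\|_{C^0}\|h\|_{H^3}\|g\|_{H^3}$. The case $(0,1,2)$ is symmetric. Summing the contributions of all ten triples yields the stated estimate; the main obstacle is precisely recognizing that in the cross cases one must collapse the order-one $h$ (or $g$) difference quotient into a $\tau$-average of the next derivative before any other estimate is attempted.
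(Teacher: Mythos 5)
Your proof follows the same strategy as the paper: Leibniz expansion into triples $(j_1,j_2,j_3)$, pointwise bounds when $\max(j_2,j_3)\le1$, and absorption of one difference quotient into the kernel followed by Lemma~\ref{goodterm1} (via Corollary~\ref{goodterm2}) or Lemma~\ref{goodterm3} for the remaining cases; the only cosmetic difference is that the paper cites Corollary~\ref{goodterm2} for the triple $(0,3,0)$ whereas you build the composite kernel and apply Lemma~\ref{goodterm1} directly, which is the same computation. One small inaccuracy in your commentary on the cross triples $(0,2,1)$, $(0,1,2)$: the pointwise product is actually integrable, with the $|\alpha-\beta|^{-1/2}$ singularity of $(g''(\alpha)-g''(\beta))/(\alpha-\beta)$ controlled via $H^1\hookrightarrow C^{0,1/2}$ and the $h$-quotient uniformly bounded, so a crude pointwise bound would also succeed; but the kernel-absorption estimate you perform is correct and coincides with the paper's.
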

\begin{proof}
We take the derivative and have
\begin{align*}
    &\partial_{\alpha}^{3}(\int_{-\pi}^{\pi}G(\alpha,\alpha-\beta)\frac{g(\alpha)-g(\alpha-\beta)}{\beta}\frac{h(\alpha)-h(\alpha-\beta)}{\beta}d\beta)\\
    &=\sum_{j_1+j_2+j_3=3}\int_{-\pi}^{\pi}\partial_{\alpha}^{j_1}(G(\alpha,\alpha-\beta))\frac{\partial_{\alpha}^{j_2}g(\alpha)-\partial_{\alpha}^{j_2}g(\alpha-\beta)}{\beta}\frac{\partial_{\alpha}^{j_3}h(\alpha)-\partial_{\alpha}^{j_3}h(\alpha-\beta)}{\beta}d\beta.
\end{align*}
If $j_2\leq 1$, $j_3\leq 1$, then 
\begin{align*}
&\|\int_{-\pi}^{\pi}\partial_{\alpha}^{j_1}(G(\alpha,\alpha-\beta))\frac{\partial_{\alpha}^{j_2}g(\alpha)-\partial_{\alpha}^{j_2}g(\alpha-\beta)}{\beta}\frac{\partial_{\alpha}^{j_3}h(\alpha)-\partial_{\alpha}^{j_3}h(\alpha-\beta)}{\beta}d\beta\|_{L^2[-2\delta,2\delta]}\\
&\lesssim\|\partial_{\alpha}^{j_1}(G(\alpha,\alpha-\beta))\|_{C^0([-2\delta,2\delta]\times\mathbb{T})}\|g\|_{C^{2}(\mathbb{T})}\|h\|_{C^{2}(\mathbb{T})}\\
&\lesssim\|G(\alpha,\beta)\|_{C^3([-2\delta,2\delta]\times\mathbb{T})}\|g\|_{H^{3}(\mathbb{T})}\|h\|_{H^{3}(\mathbb{T})}.
\end{align*}
If $j_2=3$, then $j_1=j_3=0$, by lemma \ref{goodterm2}, we have
\begin{align*}
&\|\int_{-\pi}^{\pi}\partial_{\alpha}^{j_1}(G(\alpha,\alpha-\beta))\frac{\partial_{\alpha}^{j_2}g(\alpha)-\partial_{\alpha}^{j_2}g(\alpha-\beta)}{\beta}\frac{\partial_{\alpha}^{j_3}h(\alpha)-\partial_{\alpha}^{j_3}h(\alpha-\beta)}{\beta}d\beta\|_{L^2[-2\delta,2\delta]}\\
&\lesssim\|(G(\alpha,\alpha-\beta))\|_{C^1([-2\delta,2\delta]\times\mathbb{T})}\|g\|_{H^{3}(\mathbb{T})}\|h\|_{C^{2}(\mathbb{T})}\\
&\lesssim\|G(\alpha,\beta)\|_{C^3([-2\delta,2\delta]\times\mathbb{T})}\|g\|_{H^{3}(\mathbb{T})}\|h\|_{H^{3}(\mathbb{T})}.
\end{align*}
If $j_2=2$, then $j_1=1$ or $j_3=1$, by lemma \ref{goodterm3}, we have
\begin{align*}
&\|\int_{-\pi}^{\pi}\partial_{\alpha}^{j_1}(G(\alpha,\alpha-\beta))\frac{\partial_{\alpha}^{j_2}g(\alpha)-\partial_{\alpha}^{j_2}g(\alpha-\beta)}{\beta}\frac{\partial_{\alpha}^{j_3}h(\alpha)-\partial_{\alpha}^{j_3}h(\alpha-\beta)}{\beta}d\beta\|_{L^2[-2\delta,2\delta]}\\
&\lesssim\|\partial_{\alpha}^{j_1}(G(\alpha,\alpha-\beta))\|_{C^0([-2\delta,2\delta]\times\mathbb{T})}\|g\|_{H^{3}(\mathbb{T})}\|\partial_{\alpha}^{j_3}h\|_{C^{1}(\mathbb{T})}\\
&\lesssim\|G(\alpha,\beta)\|_{C^3([-2\delta,2\delta]\times\mathbb{T})}\|g\|_{H^{3}(\mathbb{T})}\|h\|_{H^{3}(\mathbb{T})}.
\end{align*}
When $j_3=3$ or $j_3=2$, it can treated similarly as $j_2=3$ and $j_2=2$.
\end{proof}
\section*{Acknowledgements}
 The author sincerely thanks Charles Fefferman for introducing this problem and for all the helpful discussions. The author also gratefully thanks Javier Gomez-Serrano and Jaemin Park for useful discussions on Section \ref{analyticitystationary}. This material is based upon work while the author studied at Princeton University. JS was partially supported by NSF through Grant NSF DMS-1700180 and by the European Research Council through ERC-StG-852741-CAPA.
\bibliographystyle{abbrv}
\bibliography{references}

\def\cprime{$'$}
\begin{thebibliography}{10}

\bibitem{abels_matioc_2022}
H.~Abels and B.-V. Matioc.
\newblock Well-posedness of the muskat problem in subcritical lp-sobolev
  spaces.
\newblock {\em European Journal of Applied Mathematics}, 33(2):224–266, 2022.

\bibitem{alazardomarparalinearizationmuskat}
T.~{Alazard} and O.~{Lazar}.
\newblock {Paralinearization of the Muskat Equation and Application to the
  Cauchy Problem}.
\newblock {\em Archive for Rational Mechanics and Analysis}, 237(2):545--583,
  Mar. 2020.

\bibitem{Alazard2020EndpointST}
T.~Alazard and Q.-H. Nguyen.
\newblock Endpoint sobolev theory for the muskat equation.
\newblock {\em arXiv: Analysis of PDEs}, 2020.

\bibitem{Alazard-Nguyennonlipshitzmuskat}
T.~Alazard and Q.-H. Nguyen.
\newblock On the cauchy problem for the muskat equation with non-lipschitz
  initial data.
\newblock {\em Communications in Partial Differential Equations},
  46(11):2171--2212, 2021.

\bibitem{alazardthomascritialspacemuskat}
T.~Alazard and H.~Quoc.
\newblock On the cauchy problem for the muskat equation. ii: Critical initial
  data.
\newblock {\em Annals of PDE}, 7, 06 2021.

\bibitem{Ambrose2004WellposednessOT}
D.~M. Ambrose.
\newblock Well-posedness of two-phase hele–shaw flow without surface tension.
\newblock {\em European Journal of Applied Mathematics}, 15:597 -- 607, 2004.

\bibitem{stephenmuskatexistence}
S.~Cameron.
\newblock Global well-posedness for the two-dimensional muskat problem with
  slope less than 1.
\newblock {\em Analysis and PDE}, 12:997--1022, 01 2018.

\bibitem{Castro-Cordoba-Fefferman-Gancede}
{\'{A}}.~Castro, D.~C{\'{o}}rdoba, C.~Fefferman, and F.~Gancedo.
\newblock Breakdown of smoothness for the muskat problem.
\newblock {\em Archive for Rational Mechanics and Analysis}, 208(3):805--909,
  apr 2013.

\bibitem{Castro-Cordoba-Fefferman-Gancedo-LopezFernandez:rayleigh-taylor-breakdown}
{\'A}.~Castro, D.~C{\'o}rdoba, C.~Fefferman, F.~Gancedo, and
  M.~L{\'o}pez-Fern{\'a}ndez.
\newblock Rayleigh-taylor breakdown for the {M}uskat problem with applications
  to water waves.
\newblock {\em Ann. of Math. (2)}, 175:909--948, 2012.

\bibitem{muskatc12021chenquocxu}
K.~Chen, H.~Quoc, and Y.~Xu.
\newblock The muskat problem with $\mathcal{C}^1$ data.
\newblock {\em Transactions of the American Mathematical Society}, 09 2021.

\bibitem{CHENG201632}
C.~A. Cheng, R.~Granero-Belinchón, and S.~Shkoller.
\newblock Well-posedness of the muskat problem with h2 initial data.
\newblock {\em Advances in Mathematics}, 286:32--104, 2016.

\bibitem{Constantin2013OnTG}
P.~Constantin, D.~C{\'o}rdoba, F.~Gancedo, and R.~M. Strain.
\newblock On the global existence for the muskat problem.
\newblock {\em Journal of the European Mathematical Society}, 15:201--227,
  2013.

\bibitem{constantincordobagancedostrainmuskatexistence}
P.~Constantin, D.~Córdoba, F.~Gancedo, and R.~Strain.
\newblock On the muskat problem: Global in time results in 2d and 3d.
\newblock {\em American Journal of Mathematics}, 138, 10 2013.

\bibitem{CONSTANTIN20171041}
P.~Constantin, F.~Gancedo, R.~Shvydkoy, and V.~Vicol.
\newblock Global regularity for 2d muskat equations with finite slope.
\newblock {\em Annales de l'Institut Henri Poincaré C, Analyse non linéaire},
  34(4):1041--1074, 2017.

\bibitem{Cordoba-GomezSerrano-Zlatos:stability-shifting-muskat}
D.~C{\'o}rdoba, J.~G{\'o}mez-Serrano, and A.~Zlato{\v s}.
\newblock A note on stability shifting for the {M}uskat problem.
\newblock {\em Philosophical Transactions of the Royal Society of London A:
  Mathematical, Physical and Engineering Sciences}, 373(2050):20140278, 10,
  2015.

\bibitem{Cordoba-GomezSerrano-Zlatos:stability-shifting-muskat-II}
D.~C\'ordoba, J.~G\'omez-Serrano, and A.~Zlato{\v s}.
\newblock A note on stability shifting for the {M}uskat problem, {II}: {F}rom
  stable to unstable and back to stable.
\newblock {\em Anal. PDE}, 10(2):367--378, 2017.

\bibitem{CordobaCordobaGancedomuskatexistence}
A.~Córdoba, D.~Córdoba, and F.~Gancedo.
\newblock Interface evolution: the hele-shaw and muskat problems.
\newblock {\em Annals of Mathematics}, 173(1):477--542, 2011.

\bibitem{CordobaGancedocontourdynamicsmuskat}
D.~Córdoba and F.~Gancedo.
\newblock Contour dynamics of incompressible 3-d fluids in a porous medium with
  different densities.
\newblock {\em Communications in Mathematical Physics}, 273:445--471, 06 2007.

\bibitem{CordobaGancedomuskatmaximum}
D.~Córdoba and F.~Gancedo.
\newblock A maximum principle for the muskat problem for fluids with different
  densities.
\newblock {\em Communications in Mathematical Physics}, 286, 12 2007.

\bibitem{DiegoOmarmuskatexistence}
D.~Córdoba and O.~Lazar.
\newblock Global well-posedness for the 2d stable muskat problem in $h^{3/2}$.
\newblock {\em Annales scientifiques de l'École Normale Supérieure}, 54, 03
  2018.

\bibitem{Deng2016OnTT}
F.~Deng, Z.~Lei, and F.~Lin.
\newblock On the two‐dimensional muskat problem with monotone large initial
  data.
\newblock {\em Communications on Pure and Applied Mathematics}, 70, 2016.

\bibitem{GANCEDO2019552}
F.~Gancedo, E.~García-Juárez, N.~Patel, and R.~Strain.
\newblock On the muskat problem with viscosity jump: Global in time results.
\newblock {\em Advances in Mathematics}, 345:552--597, 2019.

\bibitem{2021selfsimilar}
E.~García-Juárez, J.~Gómez-Serrano, H.~Q. Nguyen, and B.~Pausader.
\newblock Self-similar solutions for the muskat equation.
\newblock 2021.

\bibitem{Matioc:local-existence-muskat-hs}
B.-V. Matioc.
\newblock The {M}uskat problem in 2d: equivalence of formulations,
  well-posedness, and regularity results.
\newblock {\em Analysis \& PDE}, 2018.
\newblock To appear.

\bibitem{MuskatPhysics}
M.~Muskat.
\newblock Two fluid systems in porous media. the encroachment of water into an
  oil sand.
\newblock {\em Physics}, 5(9):250--264, 1934.

\bibitem{NguyenBenoitparadifferentialmuskat}
H.~Nguyen and B.~Pausader.
\newblock A paradifferential approach for well-posedness of the muskat problem.
\newblock {\em Archive for Rational Mechanics and Analysis}, 237, 07 2020.

\bibitem{NGUYEN2022108122}
H.~Q. Nguyen.
\newblock Global solutions for the muskat problem in the scaling invariant
  besov space $ \dot{B}^1_{\infty, 1}$.
\newblock {\em Advances in Mathematics}, 394:108122, 2022.

\bibitem{shaw1898motion}
H.~H. Shaw.
\newblock On the motion of a viscous fluid between two parallel plates.
\newblock {\em Nature}, 58:34--36, 1898.

\bibitem{muskatregulatiryturnover2}
J.~Shi.
\newblock Regularity of solutions to the muskat equation ii: the degenerated
  regularity near the turnover points. 2022.
\newblock In preparation.

\bibitem{MichaelRusselHowisonmuskat}
M.~Siegel, R.~Caflisch, and S.~Howison.
\newblock Global existence, singular solutions, and ill‐posedness for the
  muskat problem.
\newblock {\em Communications on Pure and Applied Mathematics}, 57:1374 --
  1411, 10 2004.

\bibitem{YI2003442}
F.~Yi.
\newblock Global classical solution of muskat free boundary problem.
\newblock {\em Journal of Mathematical Analysis and Applications},
  288(2):442--461, 2003.

\bibitem{Yifahuaimuskatlocalexistence}
F.~Yi.
\newblock Local classical solution of muskat free boundary problem.
\newblock {\em Journal of Mathematical Analysis and Applications - J MATH ANAL
  APPL}, 288:442--461, 12 2003.

\end{thebibliography}

\begin{tabular}{l}
\textbf{Jia Shi}\\
{Department of Mathematics}\\
{Massachusetts Institute of Technology} \\
{Simons Building (Building 2), Room 157}\\
{Cambridge, MA 02139, USA}\\
{e-mail: jiashi@mit.edu}\\ \\
\end{tabular}
\end{document}